\documentclass[12pt,letterpaper]{article}

\usepackage{amsmath,amsfonts,amsthm,amssymb}

\hyphenation{ar-chi-me-de-an}
\setlength\overfullrule{5pt} 


\swapnumbers 


\newtheorem{lemma}{Lemma}[section]
\newtheorem{corollary}[lemma]{Corollary}
\newtheorem{theorem}[lemma]{Theorem}
\newtheorem{assumptions}[lemma]{Standing Assumptions}
\newtheorem{assumption}[lemma]{Standing Assumption}
\theoremstyle{definition} 
\newtheorem{definition}[lemma]{Definition}


\newcommand{\Nat}{{\mathbb N}}

\newcommand\rationals{{\mathbb Q}}
\newcommand\reals{{\mathbb R}}

\newcommand\hilb{{\mathfrak H}}
\newcommand{\PH}{{\mathbb P}({\mathfrak H})}
\newcommand{\EH}{{\mathbb E}({\mathfrak H})}
\newcommand{\GH}{{\mathbb G}({\mathfrak H})}
\newcommand{\BH}{{\mathbb B}({\mathfrak H})}
\newcommand{\0}{{\mathbf 0}}
\newcommand{\1}{{\mathbf 1}}

\newcommand{\dg}{\sp{\text{\rm o}}}
\newcommand{\st}{{\frac{1}{2}}}

\begin{document}

\title{Abstract Hermitian Algebras I. Spectral Resolution}

\author{David J. Foulis{\footnote{Emeritus Professor, Department of
Mathematics and Statistics, University of Massachusetts, Amherst,
MA; Postal Address: 1 Sutton Court, Amherst, MA 01003, USA;
foulis@math.umass.edu.}}\hspace{.05 in} and Sylvia
Pulmannov\'{a}{\footnote{ Mathematical Institute, Slovak Academy of
Sciences, \v Stef\'anikova 49, SK-814 73 Bratislava, Slovakia;
pulmann@mat.savba.sk. The second author was supported by Research
and Development Support Agency under the contract No. APVV-0071-06,
grant VEGA 2/6088/26 and Center of Excellence SAS, CEPI I/2/2005}}}
\date{}

\maketitle

\begin{abstract}
We refer to the real Jordan Banach algebra of bounded Hermitian
operators on a Hilbert space as a Hermitian algebra. We define an
abstract Hermitian algebra (AH-algebra) to be the directed group of
an e-ring that contains a semitransparent element, has the quadratic
annihilation property, and satisfies a Vigier condition on pairwise
commuting ascending sequences. All of this terminology is explicated
in this article, where we launch a study of AH-algebras. Here we
establish the fundamental properties of AH-algebras, including the
existence of polar decompositions and spectral resolutions, and we
show that two elements of an AH-algebra commute if and only if their
spectral projections commute. We employ spectral resolutions to
assess the structure of maximal pairwise commuting subsets of an
AH-algebra.
\end{abstract}

\medskip

\noindent {\bf AMS Classification:} Primary 06F25. Secondary 47B15,
06F20, 16W10, 06F20.

\medskip
\noindent {\bf Key Words and Phrases:} Hermitian algebra, e-ring,
effect, directed group, projection, orthomodular poset, commutant,
Vigier property, AH-algebra, orthomodular lattice, carrier
projection, comparability property, square root, absolute value,
signum, polar decomposition, spectral resolution, C-block.

\section{Introduction}  

We shall refer to the real Banach space $\GH$ of bounded Hermitian
operators on a Hilbert space $\hilb$, organized in the usual way
into a partially ordered real vector space, as the \emph{Hermitian
algebra} of $\hilb$. We call $\GH$ an ``algebra" because it is, in
fact, a JW-algebra in the sense of \cite[p. 3]{Top}, and we use the
nonstandard notation $\GH$ because, at least at first, we shall be
focusing on its structure as an \emph{partially ordered additive
abelian group} \cite[p. 1]{Good}.

Our purpose in this article is to introduce and launch a study of a
generalization of $\GH$ which we call an \emph{abstract Hermitian
algebra}, or an \emph{AH-algebra} for short. We derive the basic
properties of an AH-algebra, including the existence polar
decompositions and of spectral resolutions for each of its elements.
In subsequent articles, we shall show that, by analogy with
AW$\sp{*}$-algebras and JW-algebras, AH-algebras admit a
classification into types I, II, and III, and that an appropriate
theory of dimension and symmetries exists for such an algebra.
AH-algebras may be regarded as a class of \emph{quantum structures}
in the sense of \cite{DvPu}.

In the sequel, $\reals$ denotes the ordered field of real numbers
and $\Nat$ is the set of positive integers. Also, $\hilb$ is a
Hilbert space with inner product $\langle{\cdot\mid\cdot}\rangle$,
$\BH$ is the Banach $\sp{*}$-algebra with the uniform operator norm
$\|\cdot\|$ of all bounded linear operators on $\hilb$, and as
mentioned above, $\GH$ is the real Banach space under $\|\cdot\|$ of
all Hermitian operators in $\BH$. As usual, $\GH$ is organized into
a partially ordered real linear space by defining $A\leq B$ for
$A,B\in\GH$ iff $\langle{A\psi\mid\psi}\rangle\leq\langle{B\psi
\mid\psi}\rangle$ for all $\psi\in\hilb$. The zero and identity
operators on $\hilb$ are denoted by $\0,\1\in\GH$, and we denote the
``unit interval" in $\GH$ by $\EH :=\{E\in\GH: \0\leq E\leq \1\}$.
Following G. Ludwig \cite{Lud}, operators $A\in\EH$ are called
\emph{effect operators} on $\hilb$.  The complete atomic
orthomodular lattice (OML) \cite{Kalm} of all (orthogonal)
\emph{projection operators} on $\hilb$ is denoted by $\PH
:=\{P\in\GH: P=P\sp{2}\}$.  We note that
\[
\0,\1\in\PH\subseteq\EH\subseteq\GH\subseteq\BH.
\]
As we proceed, we shall use $\BH$, $\GH$, $\EH$, and $\PH$ to
motivate and illustrate various concepts.

\section{e-Rings} 

The following notion of an e-ring was introduced in \cite{FRwE} and
further studied in \cite{FSRI,FoPuPD} as a generalization of the
pair $(\BH,\EH)$.

\begin{definition} \label{df:e-ring}
An \emph{e-ring} is a pair $(R,E)$ consisting of an associative ring
$R$ with unity $1$ and a subset $E\subseteq R$ of elements called
\emph{effects} such that $0,1\in E$; $e\in E \implies 1-e\in E$; and
the set $E\sp{+}$ consisting of all finite sums $e_1+e_2+\cdots+e_n$
with $e_1,e_2,\ldots,e_n\in E$ satisfies the following conditions:
For all $a,b\in E\sp{+}$,

\smallskip

\begin{tabular}{ll}
(i)\ \  $-a\in E\sp{+}\implies a=0$, & (ii)\  $1-a\in E\sp{+}
 \implies a\in E$,\\
(iii)\  $ab=ba\implies ab\in E\sp{+}$,
&(iv)\  $aba\in E\sp{+}$,\\
(v)\ \ $aba=0\implies ab=ba=0$,\  and
& (vi)\ $(a-b)^2\in E\sp{+}$.
\end{tabular}

\smallskip

\noindent If $(R,E)$ is an e-ring, then the subgroup

\[
G :=\{ a-b: a,b\in E\sp{+}\}=E\sp{+}-E\sp{+}
\]
of the additive group of the ring $R$ is called the {\it directed
group of} $(R,E)$, and $P:=\{p\in G: p=p\sp{2}\}$ is called the set
of \emph{projections} in $G$. The group $G$ is organized into a
partially ordered abelian group with positive cone $E\sp{+}$ by
defining, for all $g,h\in G$, $g\leq h\Leftrightarrow h-g \in
E\sp{+}$.
\end{definition}

It is not difficult to check that $(\BH,\EH)$ is an e-ring, the
partially ordered additive abelian group $\GH$ is its directed
group, and $\PH$ is its set of projections. Fundamental properties
of e-rings are developed in \cite{FRwE}. Further examples of
e-rings, in addition to the prototype $(\BH,\EH)$, as  well as
motivation for the developments that follow can be found in
\cite{FRwE, FSRI, FoPuPD}.

\begin{assumptions} \label{as:e-Ring}
In the sequel, we assume that $(R,E)$ is an e-ring, $E\sp{+}$ is the
set of all sums of finite sequences of effects in $E$, $E\sp{+}$ is
the positive cone for the directed group $G$ of $(R,E)$, and $P$ is
the set of projections in $(R,E)$. To avoid trivialities, we also
assume that $0\not=1$.
\end{assumptions}

We note that $G$ is in fact a \emph{directed group} in the technical
sense that it is generated by its own positive cone $E\sp{+}$
\cite[p. 4]{Good}, and the set $E$ is the \emph{unit interval} in
$G$, i.e., $E=\{e\in G: 0\leq e\leq 1\}$. Also, $1$ is an
\emph{order unit}\footnote{Some authors use the terminology
\emph{strong order unit}.} in $G$ \cite[p. 4]{Good}, i.e., if
$g\in G$, there exists $n\in\Nat$ such that $g\leq n\cdot 1$.
Evidently,
\[
0,1\in P\subseteq E\subseteq E\sp{+}\subseteq G\subseteq R.
\]
We understand that $E\sp{+}$, $E$, and $P$ are partially ordered by
the restrictions of the partial order $\leq$ on $G$. By
\cite[Theorem 2.15]{FRwE}, $P$ is an orthomodular poset (OMP) with
$p\mapsto 1-p$ as the orthocomplementation. Since the mappings
$g\mapsto -g$, $e\mapsto 1-e$, and $p\mapsto 1-p$ are
order-reversing and of period $2$ on $G$, $E$, and $P$,
respectively, there is a \emph{duality principle} whereby properties
of existing suprema in $G$, $E$, or $P$ are converted to properties
of infima and \emph{vice versa}.

In what follows, we focus attention on the directed group $G$, the
unit interval $E\subseteq G$, and the OMP $P\subseteq E$ of
projections---the enveloping ring $R$ is just a convenient
mathematical environment in which to study the triple $P\subseteq
E\subseteq G$, and the detailed structure of $R$ will not concern us
here.

\begin{definition} \label{df:commutativity}
Let $g,h\in G$. We define $gCh$ to mean that $g$ commutes with $h$,
i.e., that $gh=hg$. If $A\subseteq G$, we also define $C(A)$, called
the \emph{commutant of $A$ in} $G$ by $C(A) :=\{g\in G \mid gCa,
\forall a\in A\}$. The set $CC(A) :=C(C(A))$ is called the
\emph{bicommutant of $A$ in} $G$, and if $g\in CC(h) :=CC(\{h\})$,
we say that $g$ \emph{double commutes} with $h$.  We also define
$CPC(g) := C(P\cap C(g))$, so that $h\in CPC(g)$ iff $h$ commutes
with every projection that commutes with $g$.
\end{definition}

In contrast to more common usage, e.g. in operator theory, we use
the notation $C(A)$ and $CC(A)$ only in relation to elements of the
directed group $G$ rather than to arbitrary elements in the
enveloping ring $R$. By Definition \ref{df:e-ring} (iii), if $0\leq
g,h\in G$, then $gCh\Rightarrow gh=hg\in G$\,; however, unless
$0\leq g,h$, we do not assume \emph{a priori} that $gCh\Rightarrow
gh\in G$.\footnote{See Lemma \ref{lm:1/2Lemma} (i) below.}  By the
spectral theorem, if $A\in\GH$, then $CPC(A)= CC(A)$\,; in general
however, even the condition $CPC(g)\subseteq C(g)$ may fail.

\begin{lemma} \label{lm:EProperties}
Let $e,f\in E$, let $p\in P$, and let $g,h\in G$. Then:
\begin{enumerate}

\item If $eCf$, then $0\leq ef\leq e,f\leq 1$ and $0\leq e
 \sp{2}\leq e\leq 1$.

\item $e\leq p\Leftrightarrow e=ep\Leftrightarrow e=pe$ and
 $p\leq e\Leftrightarrow p=pe\Leftrightarrow p=ep$.

\item $pgp,\, php\in G$, and if $g\leq h$, then $pgp\leq php$.
\end{enumerate}
\end{lemma}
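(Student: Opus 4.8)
The plan is to prove the three items of Lemma~\ref{lm:EProperties} using only the e-ring axioms (Definition~\ref{df:e-ring}), the definition of the positive cone $E\sp{+}$ and the order $\leq$, and the facts recorded in the Standing Assumptions~\ref{as:e-Ring}---chiefly that $E=\{e\in G:0\leq e\leq 1\}$ and $0,1\in P\subseteq E\subseteq E\sp{+}$. Throughout I will translate order statements into cone membership: $a\leq b$ means $b-a\in E\sp{+}$.

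For item (i), suppose $eCf$ with $e,f\in E\subseteq E\sp{+}$. Since $ef=fe$, axiom (iii) gives $ef\in E\sp{+}$, so $0\leq ef$. To get $ef\leq e$, I would compute $e-ef=e(1-f)$; because $e,1-f\in E\subseteq E\sp{+}$ and they commute (as $eCf$ implies $eC(1-f)$), axiom~(iii) again yields $e(1-f)\in E\sp{+}$, i.e. $ef\leq e$; symmetrically $ef\leq f$, and $e,f\leq 1$ is immediate from $e,f\in E$. The statement $0\leq e\sp{2}\leq e\leq 1$ is the special case $f=e$ (noting $eCe$ trivially).

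For item (ii), I would prove the first chain $e\leq p\Leftrightarrow e=ep\Leftrightarrow e=pe$; the second follows by applying the first to $1-e$ and $1-p$ (using that complementation reverses order and that $e\leq p$ iff $1-p\leq 1-e$). The implication $e=ep\Rightarrow e\leq p$ comes from $p-e=p-ep=(1-e)p$, a product of commuting effects (commuting because $e=ep=pe$ follows once we know $e=ep$, which gives $e\sp{*}$-type symmetry here via $p=p\sp{2}$; more carefully, $e=ep$ forces $pe=pep$ and one checks $ep=pe$) lying in $E\sp{+}$ by axiom~(iii), hence $e\leq p$. The reverse implication $e\leq p\Rightarrow e=ep$ is the subtler direction: from $e\leq p$ I would use axiom~(iv) applied to the effect $1-p$ and the sum-of-effects $e$, producing $(1-p)e(1-p)\in E\sp{+}$; since $0\leq e\leq p$ I expect $(1-p)e(1-p)\leq(1-p)p(1-p)=0$ by item~(iii) (monotonicity of $x\mapsto qxq$), whence $(1-p)e(1-p)=0$ by axiom~(i), and then axiom~(v) yields $(1-p)e=e(1-p)=0$, i.e. $e=ep=pe$.

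For item (iii), $pgp\in G$ follows by writing $g=a-b$ with $a,b\in E\sp{+}$, so $pgp=pap-pbp$; it therefore suffices to show $pap\in E\sp{+}$ for $a\in E\sp{+}$, which is exactly axiom~(iv) extended from single effects to sums by expanding $a=\sum e_i$ and noting $p(\sum e_i)p=\sum pe_ip$ with each $pe_ip\in E\sp{+}$. Monotonicity, $g\leq h\Rightarrow pgp\leq php$, then reduces to showing $p(h-g)p\in E\sp{+}$ where $h-g\in E\sp{+}$, which is the same axiom~(iv) computation. The main obstacle I anticipate is the forward direction of item~(ii), namely extracting $e=ep$ from the mere order inequality $e\leq p$: this is where the interplay of the annihilation axiom~(v) with the ``squeeze to zero'' argument via axiom~(i) must be orchestrated carefully, and it is the one place where commutativity of $e$ and $p$ is a conclusion rather than a hypothesis, so establishing $(1-p)e(1-p)=0$ and correctly invoking axiom~(v) to kill both one-sided products is the delicate step.
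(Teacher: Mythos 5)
Your argument reaches the right conclusions, but it takes a genuinely different route from the paper's. The paper's proof of Lemma~\ref{lm:EProperties} is essentially a citation: parts (i) and (ii) are referred to \cite{FRwE} (Lemma 2.6, Theorem 2.9, Corollary 2.10 there), and part (iii) to \cite{FRwE} (Lemma 2.4 (iv) and (v)), with only the one\-/line monotonicity computation $0\leq p(h-g)p=php-pgp$ written out. You instead reprove everything from the axioms of Definition~\ref{df:e-ring}, which is more self-contained and makes visible exactly which axioms carry the load: (iii) for products of commuting positives, (iv) for $a\mapsto pap$, (i) for the squeeze to zero, and (v) for converting a two-sided annihilation into one-sided ones. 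Your ordering is also sound: you prove (iii) independently of (ii) and then legitimately use the monotonicity of $x\mapsto(1-p)x(1-p)$ in the forward direction of (ii). The forward direction of (ii) --- $e\leq p\Rightarrow(1-p)e(1-p)=0$ by squeezing between $0$ and $(1-p)p(1-p)=0$, then axiom (v) to get $(1-p)e=e(1-p)=0$ --- is exactly the right mechanism and is presumably what the cited proof in \cite{FRwE} does as well.

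The one step you leave genuinely under-justified is the claim, in the direction $e=ep\Rightarrow e\leq p$, that $e=ep$ already forces $ep=pe$; your parenthetical ``$e=ep$ forces $pe=pep$ and one checks $ep=pe$'' does not actually close this, since $pe=pep$ alone gives nothing. The fix is the same trick you use elsewhere: from $e(1-p)=0$ one gets $(1-p)e(1-p)=(1-p)\bigl(e(1-p)\bigr)=0$, and axiom (v) with $a=1-p$, $b=e$ then yields $(1-p)e=e(1-p)=0$, i.e.\ $e=pe=ep$; after that, $p-e=(1-e)p=p(1-e)$ is a product of commuting elements of $E\sp{+}$ and axiom (iii) gives $e\leq p$. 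With that one line supplied, your proof is complete.
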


\begin{proof} For (i) and (ii), see \cite[Lemma 2.6, Theorem  2.9,
Corollary 2.10]{FRwE}. By \cite[Lemma 2.4 (iv)]{FRwE}, $pgp,\, php
\in G$, and if $g\leq h$, then $0\leq h-g$, so $0\leq p(h-g)p=
php-pgp$ by \cite[Lemma 2.4 (v)]{FRwE}, and (iii) follows.
\end{proof}

Parts (ii) and (iii) of the following theorem are of interest
because they provide conditions \emph{not directly involving
multiplication} for an effect to be a projection. See \cite[Theorem
3.2] {FSRI} for a proof of the theorem.

\begin{theorem} \label{th:proj}
If $p\in E$, then the following conditions are mutually equivalent:
{\rm (i)} $p\in P$. {\rm (ii)} If $e,f,e+f\in E$, then $e,f\leq p
\Rightarrow e+f\leq p$. {\rm (iii)} If $e\in E$ with $e\leq p,1-p$,
then $e=0$. {\rm (iv)} $\exists n,m\in\Nat,\,n\not=m\text{ and }
p\sp{n}=p\sp{m}$.
\end{theorem}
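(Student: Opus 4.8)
The plan is to show the four conditions are mutually equivalent via the cycle (i)$\Rightarrow$(ii)$\Rightarrow$(iii)$\Rightarrow$(i) together with the separate equivalence (i)$\Leftrightarrow$(iv). Three of these links are short and rest only on Lemma \ref{lm:EProperties}. For (i)$\Rightarrow$(ii), I would assume $p\in P$ and, given $e,f,e+f\in E$ with $e,f\leq p$, use Lemma \ref{lm:EProperties}(ii) to write $e=ep$ and $f=fp$, whence $e+f=(e+f)p$; since $e+f\in E$, the same lemma returns $e+f\leq p$. For (iii)$\Rightarrow$(i), I would form the effect $e:=p(1-p)=p-p\sp{2}$; because $p$ commutes with $1-p$, Lemma \ref{lm:EProperties}(i) gives $0\leq e\leq p$ and $e\leq 1-p$, so hypothesis (iii) forces $e=0$, that is, $p=p\sp{2}$ and $p\in P$.

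The equivalence (i)$\Leftrightarrow$(iv) is where the ring structure does real work, though it too is finite. The implication (i)$\Rightarrow$(iv) is immediate, taking $n=1$, $m=2$. For (iv)$\Rightarrow$(i), I would first note that the powers of $p$ form a decreasing chain $p\geq p\sp{2}\geq p\sp{3}\geq\cdots\geq 0$ in $E$, since each $p\sp{k+1}=p\sp{k}\cdot p\leq p\sp{k}$ by Lemma \ref{lm:EProperties}(i); hence an equality $p\sp{n}=p\sp{m}$ with $n<m$ collapses the chain and yields $p\sp{n}=p\sp{n+1}$, equivalently $p\sp{n}(1-p)=0$. The key idea is then to peel off one factor of $p$ at a time: for $n\geq 2$, set $a:=p$ and $b:=p\sp{n-2}(1-p)$, both of which lie in $E\sp{+}$, and observe that $aba=p\sp{n}(1-p)=0$, so axiom (v) of Definition \ref{df:e-ring} gives $ab=p\sp{n-1}(1-p)=0$, i.e.\ $p\sp{n-1}=p\sp{n}$. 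Downward induction on the exponent reaches $p=p\sp{2}$, which is (i).

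The remaining link, (ii)$\Rightarrow$(iii), is the step I expect to be the main obstacle. Starting from $e\in E$ with $e\leq p$ and $e\leq 1-p$, one has $2e=e+e\leq p+(1-p)=1$, so $2e\in E$, and applying (ii) to the pair $e,e$ gives $2e\leq p$. Iterating this---at each stage $(n+1)e=ne+e\leq p+(1-p)=1$, so $(n+1)e\in E$, and (ii) applied to $ne,e$ gives $(n+1)e\leq p$---shows that $ne\leq 1$ for every $n\in\Nat$. The difficulty is to pass from this to $e=0$: this is an Archimedean-type conclusion for the order unit $1$, which does not follow from the order-unit property by itself. I would discharge it by invoking the Archimedean character of $G$ secured in the development of \cite{FRwE}; failing a citable version, one could instead try to force some power of $e$ to vanish and then invoke axiom (v), since $e\sp{2}=e\cdot 1\cdot e=0$ already gives $e=0$. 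Establishing this Archimedean step is the crux on which the equivalence of the sharpness condition (iii) with the principality condition (ii) turns, and it is the part of the argument I would expect to demand the most care.
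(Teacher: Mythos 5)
The paper does not argue this theorem itself---it defers entirely to \cite[Theorem 3.2]{FSRI}---so your proof has to stand on its own, and three of your four links do: (i)$\Rightarrow$(ii) via $e=ep$, $f=fp$; (iii)$\Rightarrow$(i) via $e:=p-p\sp{2}$; and (i)$\Leftrightarrow$(iv) via the collapsing chain $p\geq p\sp{2}\geq\cdots$ followed by repeated use of axiom (v) of Definition \ref{df:e-ring} are all correct and use only tools available in the general e-ring setting. The genuine gap is exactly where you locate it, in (ii)$\Rightarrow$(iii), but neither of your proposed repairs works. Theorem \ref{th:proj} is stated under Standing Assumptions \ref{as:e-Ring} alone, and the directed group of a general e-ring is \emph{not} archimedean: the paper introduces archimedeanness as an additional property in Definition \ref{df:PropertiesI} (iii) and only derives it in Theorem \ref{th:1/2+CV} (ii) from the extra hypotheses $\st\in E$ and CV; if it were already ``secured in the development of \cite{FRwE}'' those passages would be vacuous. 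Your fallback (show $e\sp{2}=0$ and invoke axiom (v)) is not derived from $ne\leq 1$ either. So, as written, the cycle is broken at (ii)$\Rightarrow$(iii).

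The step can be closed finitely, and with less machinery than your iteration. Given (ii) and $e\in E$ with $e\leq p$ and $e\leq 1-p$, apply (ii) directly to the pair $e,\,p$: both lie in $E$ and are $\leq p$, and $0\leq e+p\leq(1-p)+p=1$ shows $e+p\in E$, so (ii) yields $e+p\leq p$, i.e.\ $e\leq 0$, whence $e=0$. No induction and no archimedean hypothesis is needed; the move you overlooked is that (ii) may be applied with $f=p$ itself, the hypothesis $e\leq 1-p$ being precisely what certifies $e+p\in E$. (The same trick applied to the pair $2e,\,p-e$ also works, so your first iteration step was already enough.) With this substitution your argument becomes a complete and self-contained proof of the theorem.
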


\begin{corollary} \label{co:Psup/infClosed}
Suppose that $\emptyset\not=Q\subseteq P$ and that $Q$ has a
supremum {\rm (}respectively, an infimum{\rm )} $p$ in $G$. Then
$p\in P$ and $p$ is the supremum {\rm (}respectively, the
infimum{\rm )} of $Q$ in $P$.\footnote{In general, the converse of
Corollary \ref{co:Psup/infClosed} fails, i.e., if $Q\subseteq P$ and
the supremum (respectively, the infimum) of $Q$ in $P$ exists, it need
not be the supremum (respectively, the infimum) of $Q$ in $G$.}
\end{corollary}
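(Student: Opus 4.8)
The plan is to prove the supremum version and then obtain the infimum version by the duality principle described after Standing Assumption \ref{as:e-Ring}. Indeed, if $p$ is the infimum of $Q$ in $G$, then since $q\mapsto 1-q$ is an order-reversing bijection of $G$ that carries $P$ onto $P$, the set $\{1-q:q\in Q\}\subseteq P$ has supremum $1-p$ in $G$; once the supremum case has been settled we conclude $1-p\in P$, whence $p=1-(1-p)\in P$, and that $1-p$ is the supremum of $\{1-q\}$ in $P$ translates back into $p$ being the infimum of $Q$ in $P$. So I would assume throughout that $p$ is the supremum of $Q$ in $G$, with $\emptyset\neq Q\subseteq P$.

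First I would check that $p\in E$. Picking any $q_0\in Q$ gives $0\leq q_0\leq p$, and since $q\leq 1$ for every $q\in Q$, the element $1$ is an upper bound of $Q$, so $p\leq 1$; hence $0\leq p\leq 1$, i.e. $p\in E$. Because $q\leq p$ for each $q\in Q$, Lemma \ref{lm:EProperties}(ii), applied with $q$ as the projection and $p$ as the effect, yields the multiplicative relations $q=qp=pq$ for every $q\in Q$.

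To show $p\in P$ I would verify condition (iii) of Theorem \ref{th:proj}. So let $e\in E$ with $e\leq p$ and $e\leq 1-p$; the goal is $e=0$. For each $q\in Q$ we have $q\leq p$, hence $e\leq 1-p\leq 1-q$, and since $1-q\in P$, Lemma \ref{lm:EProperties}(ii) gives $e=e(1-q)=(1-q)e$, i.e. $eq=qe=0$, and therefore $(1-q)e(1-q)=e$. Applying Lemma \ref{lm:EProperties}(iii) to the projection $1-q$ and the inequality $e\leq p$ gives $e=(1-q)e(1-q)\leq(1-q)p(1-q)$, and a direct computation using $pq=qp=q$ and $q^2=q$ shows $(1-q)p(1-q)=p-q$. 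Thus $e\leq p-q$, that is $q\leq p-e$, for every $q\in Q$, so $p-e$ is an upper bound of $Q$ in $G$. Since $p$ is the least upper bound and $p-e\leq p$, we get $p\leq p-e$, so $e\leq 0$; combined with $e\geq 0$ this forces $e=0$, and Theorem \ref{th:proj}(iii) delivers $p\in P$.

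That $p$ is then the supremum of $Q$ in $P$ is immediate: $p$ is a projection and an upper bound of $Q$, while any upper bound of $Q$ in $P$ is a fortiori an upper bound in $G$ and hence dominates $p$. The step I expect to be the crux is the construction of the competing upper bound $p-e$: the observation that $e$ annihilates each $q$, so that $e$ is fixed by compression to $1-q$, lets Lemma \ref{lm:EProperties}(iii) squeeze $e$ below $p-q$. This is precisely what converts the purely order-theoretic least-upper-bound hypothesis into the idempotency supplied by Theorem \ref{th:proj}.
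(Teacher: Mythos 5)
Your proof is correct, but it takes a different route from the paper's. The paper dualizes in the opposite direction, treating the \emph{infimum} case directly, and verifies that $p\in P$ via criterion (ii) of Theorem \ref{th:proj}: if $e,f,e+f\in E$ with $e,f\leq p$, then $e,f\leq q$ for every $q\in Q$, so $e+f\leq q$ by (ii) applied to each projection $q$, and hence $e+f\leq p$ since $p$ is the greatest lower bound. This is purely order-theoretic --- the additivity criterion passes through the infimum with no computation at all. You instead treat the supremum case and verify criterion (iii), which forces you to manufacture a competing upper bound: you show $e$ annihilates each $q$, so $e=(1-q)e(1-q)\leq(1-q)p(1-q)=p-q$ by Lemma \ref{lm:EProperties} (ii), (iii), whence $q\leq p-e$ for all $q$ and minimality of $p$ gives $e\leq 0$. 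All the steps check out (in particular $qpq=q^2=q$ so $(1-q)p(1-q)=p-q$, and $p\in E$ because $1$ is an upper bound of $Q$), so the argument is sound; it just pays for its choice of criterion with a multiplicative computation that the paper's choice of criterion (ii) avoids entirely. The trade-off is a matter of taste: your version makes visible how the compression by $1-q$ interacts with the order, while the paper's is shorter and never leaves the order structure.
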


\begin{proof}
By duality it is sufficient to consider the case in which $p$ is the
infimum of $Q$ in $G$. As $0\leq q$ for all $q\in Q$, we have $0\leq
p$. Choose any $q\sb{0}\in Q$. Then $0\leq p\leq q\sb{0}\leq 1$, so
$p\in E$. To prove that $p\in P$, suppose that $e,f,e+f\in E$ with
$e,f\leq p$. Then, for all $q\in Q$, we have $e,f\leq q$, whereupon
$e+f\leq q$ by Theorem \ref{th:proj} (ii), and it follows that
$e+f\leq p$, whence $p\in P$ by Theorem \ref{th:proj} (ii) again. As
$p\in P$, it is clear that $p$ is the infimum of $Q$ in $P$.
\end{proof}

As we progress, we shall study conditions on $G$, $E$, and $P$ that
are suggested by properties of the prototypes $\GH$, $\EH$, and
$\PH$. Among these are the following.

\begin{definition} \label{df:PropertiesI}
\
\begin{enumerate}
\item If there is an effect $h\in E$ such that $2h=1$, then
 $h$ is unique, and we write $\st :=h$ \cite[Definition 4.1]
 {FSRI}. For reasons elucidated \cite[Section 4]{FSRI}, we call
 $\st$, if it exists, the \emph{semitransparent effect}.

\item $G$ has the \emph{quadratic annihilation} (QA) \emph{property}
 iff, for all $g,h\in G$, $gh\sp{2}g=0\Rightarrow gh=hg=0$.

\item $G$ is \emph{archimedean} \cite[p. 20]{Good} iff, whenever
 $g,h\in G$ and $ng\leq h$ for all $n\in\Nat$, it follows that
 $g\leq 0$.
\end{enumerate}
\end{definition}

Of course, $\st\1$ is the semitransparent effect operator in $\EH$.
If $A,B\in\GH$, then the adjoint of $BA$ is $(BA)\sp{*}=AB$, so
$AB\sp{2}A=(BA)\sp{*}(BA)=\0$ implies that $AB=BA=\0$\,; i.e., $\GH$
has QA.  Clearly, $\GH$ is archimedean.

\begin{lemma} \label{lm:1/2Lemma}
Suppose that $\st\in E$, let $g,h,k\in G$, and let $n\in\Nat$. Then:
{\rm (i)} $gCh\Rightarrow gh=hg\in G$. {\rm (ii)} $ghg\in G$. {\rm
(iii)} $\st(gh+hg)\in G$. {\rm (iv)} $g\sp{n}\in G$. {\rm (v)} If
$G$ is archimedean, then $g\sp{n}=0\Rightarrow g=0$. {\rm (vi)} If
$0\leq k\in C(g)\cap C(h)$, then $g\leq h\Rightarrow gk\leq hk$.
\end{lemma}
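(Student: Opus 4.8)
The plan is to establish the six claims using two preliminary facts and then treat them roughly in the order (vi), (iii), (i), (ii), (iv), (v), since most of the work is carried by the behavior of squares and of the semitransparent effect. The first preliminary fact is that every square of a group element is positive: given $x\in G$, write $x=a-b$ with $a,b\in E\sp{+}$ (possible because $G=E\sp{+}-E\sp{+}$), so that $x\sp{2}=(a-b)\sp{2}\in E\sp{+}$ by Definition \ref{df:e-ring}(vi). The second is the behavior of $\st$: invoking the properties of the semitransparent effect from \cite[Section 4]{FSRI}, I take $\st$ to be central in $R$ with $2\st=\1$, so that for any $y\in E\sp{+}$ the product $\st y=y\st$ lies in $E\sp{+}$ by Definition \ref{df:e-ring}(iii); thus multiplication by $\st$ preserves $E\sp{+}$ and $G$, and $\st(2g)=g$ for every $g\in G$.

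Part (vi) is immediate: if $0\leq k$ commutes with $g$ and $h$ and $g\leq h$, then $d:=h-g\in E\sp{+}$ commutes with $k\in E\sp{+}$, so $dk=kd\in E\sp{+}$ by Definition \ref{df:e-ring}(iii), i.e. $gk\leq hk$. For (iii) I would polarize, using the ring identity $gh+hg=(g+h)\sp{2}-g\sp{2}-h\sp{2}$; each of the three squares lies in $E\sp{+}$, and since multiplication by $\st$ sends $E\sp{+}$ into $E\sp{+}$, the element $\st(gh+hg)=\st(g+h)\sp{2}-\st g\sp{2}-\st h\sp{2}$ lies in $G$. Part (i) then follows instantly, since $gCh$ gives $\st(gh+hg)=\st(2gh)=gh$, which is in $G$ by (iii). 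For (ii) I would first record that $whw\in G$ whenever $w\in E\sp{+}$ and $h\in G$: writing $h=c-d$ gives $whw=wcw-wdw\in E\sp{+}-E\sp{+}$ by Definition \ref{df:e-ring}(iv). Then, writing $g=a-b$, I expand $ghg=aha-(ahb+bha)+bhb$ and dispose of the mixed term through $ahb+bha=(a+b)h(a+b)-aha-bhb$, every summand now being of the form $whw$ and hence in $G$.

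Part (iv) I would prove by strong induction on $n$: for an even exponent, $g\sp{2m}=(g\sp{m})\sp{2}\in E\sp{+}$ by the first preliminary fact, and for an odd exponent, $g\sp{2m+1}=g\sp{m}\,g\,g\sp{m}\in G$ by (ii), the inner factor $g\sp{m}$ lying in $G$ by the inductive hypothesis. Finally, (v) rests on the reduction: if $x\in G$ and $x\sp{2}=0$, then $x=0$. Granting this, choose $s\in\Nat$ with $2\sp{s}\geq n$; then $g\sp{2\sp{s}}=0$, and since $g\sp{2\sp{s}}=(g\sp{2\sp{s-1}})\sp{2}$ the reduction forces $g\sp{2\sp{s-1}}=0$, and iterating lowers the exponent to $g=0$. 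To prove the reduction I would, for each $j$, set $q:=(\st)\sp{j}$ (so $q$ is central and $2\sp{j}q=\1$) and form the positive squares $(q\pm x)\sp{2}=q\sp{2}\pm 2qx$ (using $x\sp{2}=0$ and centrality of $q$); multiplying by $2\sp{j}$ and halving with $\st$ yields $-(\st)\sp{j+1}\leq x\leq(\st)\sp{j+1}$ for every $j$. Then, given $n\in\Nat$, choosing $j$ with $2\sp{j+1}\geq n$ gives $n(\pm x)\leq\1$, so the archimedean hypothesis of Definition \ref{df:PropertiesI}(iii) yields $x\leq 0$ and $-x\leq 0$, whence $x=0$.

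The parts (ii), (iv), and (vi) are essentially bookkeeping on top of the e-ring axioms, and (i) and (iii) reduce cleanly to the two preliminary facts. I expect the genuine obstacle to be (v): one must manufacture, out of the single relation $x\sp{2}=0$, a whole family of two-sided dyadic bounds on $x$ and then feed them to the archimedean condition in the correct quantifier order (fixing $n$ first, then selecting $j$). The indispensable input throughout (i), (iii), and (v) is the centrality of $\st$ together with the fact that multiplication by $\st$ preserves $G$; if these were not already available from \cite[Section 4]{FSRI}, establishing them for the parts that do not assume $G$ archimedean---and so cannot fall back on torsion-freeness---would itself be the crux.
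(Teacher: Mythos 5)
Your argument is correct, but it takes a different route from the paper, which for parts (i)--(v) simply cites \cite[Theorems 4.1 and 4.2]{FSRI} and proves only (vi) inline. Your proof of (vi) coincides with the paper's, except that the paper also explicitly records that $gk,hk\in G$ (via part (i)) so that the inequality $gk\leq hk$ is an inequality \emph{in} $G$; your computation only shows $hk-gk\in E\sp{+}$, so you should either prove (i) before (vi) or add that remark. For (i)--(v) you reconstruct the content of the cited theorems from the e-ring axioms: the polarization identity $gh+hg=(g+h)\sp{2}-g\sp{2}-h\sp{2}$ together with Definition \ref{df:e-ring} (vi) for (iii) and hence (i); the decomposition of $ghg$ into terms of the form $waw$ with $w\in E\sp{+}$ via Definition \ref{df:e-ring} (iv) for (ii); strong induction splitting even and odd exponents for (iv); and, for (v), the reduction to $x\sp{2}=0\Rightarrow x=0$ by descending through powers of $2$, with the two-sided dyadic bounds $-\st\sp{j+1}\leq x\leq\st\sp{j+1}$ extracted from $(\st\sp{j}\pm x)\sp{2}\in E\sp{+}$ and fed to the archimedean condition in the correct quantifier order. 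All of these steps check out. The one genuine dependency you do not discharge is the centrality of $\st$ in $R$ (needed for $\st E\sp{+}\subseteq E\sp{+}$ and for the expansion of $(\st\sp{j}\pm x)\sp{2}$): this does not follow formally from $2\st=1$ alone, since $2(\st g-g\st)=0$ only shows the commutator is $2$-torsion in $R$. You correctly identify this as the crux and correctly attribute it to \cite[Section 4]{FSRI}, which is the same source the paper leans on; granting that fact, your derivation is a self-contained and somewhat more informative substitute for the paper's citations.
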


\begin{proof}
For (i), (ii), (iii), and (iv), see \cite[Theorem 4.1]{FSRI}\,; for
(v), see \cite[Theorem 4.2]{FSRI}.

\smallskip

\noindent (vi) Assume the hypotheses. Then $0\leq h-g,k$ and
$(h-g)Ck$, so $0\leq (h-g)k=hk-gk$ by Definition \ref{df:e-ring}
(iii), and by part (i), $hk,\,gk\in G$.
\end{proof}

\begin{lemma} \label{lm:QALemma}
Suppose that $G$ has {\rm QA} and let $g,h\in G$. Then
$gh=0\Rightarrow hg=0$.
\end{lemma}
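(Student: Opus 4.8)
The plan is to reduce the claim directly to the quadratic annihilation hypothesis by manufacturing a product of the form $gh^{2}g$ that vanishes. The key observation is the associative factorization $gh^{2}g=(gh)(hg)$, which is valid in the ambient ring $R$ since ring multiplication is associative.

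First I would start from the hypothesis $gh=0$ and examine $gh^{2}g$. Writing out the four factors and regrouping them as $(gh)(hg)$, the substitution $gh=0$ gives $gh^{2}g=0\cdot(hg)=0$. Thus the single relation $gh=0$ already forces $gh^{2}g=0$.

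Then, since $g,h\in G$ and $G$ is assumed to have QA, Definition~\ref{df:PropertiesI}(ii) applies immediately to the relation $gh^{2}g=0$, yielding $gh=hg=0$. In particular $hg=0$, which is exactly what the lemma asserts.

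I do not anticipate a substantive obstacle: once the factorization $gh^{2}g=(gh)(hg)$ is noticed, the entire argument is a one-line invocation of QA. The only point requiring a word of care is that the products $gh$, $hg$, and $h^{2}$ are formed in $R$ (where associativity is available) and need not themselves lie in $G$; this is harmless, because the QA property is a statement about the \emph{elements} $g,h$ of $G$, while the products entering its hypothesis and conclusion are understood as equalities in $R$.
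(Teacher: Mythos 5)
Your proof is correct and is essentially identical to the paper's: both derive $gh^{2}g=(gh)(hg)=0$ from $gh=0$ and then invoke the QA property to conclude $gh=hg=0$. Your extra remark about the products being formed in $R$ is a reasonable clarification but changes nothing substantive.
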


\begin{proof}
By QA, $gh=0\Rightarrow gh\sp{2}g=0\Rightarrow gh=hg=0 \Rightarrow
hg=0$.
\end{proof}

\section{AH-Algebras} 

We maintain Standing Assumptions \ref{as:e-Ring}.

\begin{definition}

\

\begin{enumerate}

\item $G$ has the \emph{Vigier} (V) \emph{property} \cite[Definition 5.1]{FSRI}
 iff every ascending sequence $g\sb{1}\leq g\sb{2}\leq\cdots$
 in $G$ that is bounded above in $G$ has a supremum $g$ in $G$, and
 $g\in CC(\{g\sb{n}: n\in\Nat\})$.

\item $G$ has the \emph{complete Vigier} (complete V) \emph{property}
 iff every ascending net $(g\sb{\alpha})\sb{\alpha\in A}$ in $G$ that
 is bounded above in $G$ has a supremum $g$ in $G$, and $g\in
 CC(\{e\sb{\alpha}: \alpha\in A\})$).

\item $G$ has the \emph{commutative Vigier} (CV) \emph{property}
 iff every ascending sequence $g\sb{1}\leq g\sb{2} \leq\cdots$ of
 pairwise commuting elements of $G$ that is bounded above in $G$ has
 a supremum $g$ in $G$, and $g\in CC(\{g\sb{n}: n\in\Nat\})$.

\item A net $(g\sb{\alpha})\sb{\alpha\in A}$ in $G$ is called
 a \emph{C-net} iff for all $\alpha,\beta\in A$, $\alpha\leq
 \beta\Rightarrow g\sb{\alpha}Cg\sb{\beta}$. We say that $G$ has
 the \emph{complete commutative Vigier} (complete CV) \emph{property}
 iff every ascending C-net $(g\sb{\alpha})\sb{\alpha\in A}$ in $G$
 that is bounded above in $G$ has a supremum $g$ in $G$, and
 $g\in CC(\{g\sb{\alpha}: \alpha\in A\})$.
\end{enumerate}

\end{definition}

An argument originally due to J. Vigier \cite{Vigier} shows that
$\GH$ has the V property \cite[page 263]{RNg}; in fact, by
essentially the same argument, $\GH$ has the complete V property.
Obviously, $\text{complete\ }V\Rightarrow V \Rightarrow CV$ and
$\text{complete\ } V\Rightarrow\text{\ complete\ }CV\Rightarrow CV$.

\begin{theorem}\label{th:1/2+CV} Suppose that $\st\in E$
and $G$ has the CV property. Then:
\begin{enumerate}
\item If $0\leq a\in G$, then $0$ is the infimum in $G$ of the
 sequence $((\frac{1}{2})\sp{n}a)\sb{n\in\Nat}$.
\item $G$ is archimedean.
\end{enumerate}
\end{theorem}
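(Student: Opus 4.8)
The plan is to establish (i) first and then deduce (ii) from it, the whole argument resting on the semitransparent effect $\st$ behaving like the scalar $\frac12$. So I would begin by recording that $\st$ is central in $G$: for any $g\in G$, using $2\st=1$ and associativity, $\st g=\st g(2\st)=2\st g\st=(2\st)g\st=g\st$, while $\st g\st\in G$ by Lemma~\ref{lm:1/2Lemma}(ii); hence $\st g=g\st\in G$. Consequently each power $\st\sp{n}$ is central and lies in $E\sp{+}$ (a power of an effect commuting with itself, via Definition~\ref{df:e-ring}(iii)), and for $0\leq a$ the element $\st\sp{n}a$ is a product of commuting positive elements, so $\st\sp{n}a\geq 0$ by Definition~\ref{df:e-ring}(iii) again.

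For (i), set $t\sb{n}:=\st\sp{n}a$. I would verify that $(t\sb{n})$ is a descending, pairwise commuting sequence bounded below by $0$: pairwise commutativity follows from centrality of $\st$ (so $t\sb{n}t\sb{m}=\st\sp{n+m}a\sp{2}=t\sb{m}t\sb{n}$), and the crucial computation is the scaling identity $t\sb{n}=(2\st)\st\sp{n}a=2\st\sp{n+1}a=2t\sb{n+1}$, which simultaneously gives $t\sb{n}-t\sb{n+1}=t\sb{n+1}\geq 0$ (so the sequence descends) and drives the final step. Applying the CV property to the ascending commuting sequence $(-t\sb{n})$, which is bounded above by $0$, and invoking the duality principle, the infimum $c:=\inf\sb{n}t\sb{n}$ exists in $G$, and $c\geq 0$ since $0$ is a lower bound. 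Now from $t\sb{n+1}\geq c$ and $t\sb{n}=2t\sb{n+1}$ I obtain $t\sb{n}\geq 2c$ for every $n$, so $2c$ is also a lower bound of $(t\sb{n})$; as $c$ is the greatest lower bound, $2c\leq c$, i.e. $c\leq 0$, and therefore $c=0$.

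For (ii), suppose $ng\leq h$ for all $n\in\Nat$ and aim to show $g\leq 0$. The first move is to secure a positive upper bound: since $1$ is an order unit there is $k\in\Nat$ with $-h\leq k\cdot 1$, so $\widetilde{h}:=h+k\cdot 1\geq 0$ while still $ng\leq\widetilde{h}$ for all $n$. Next, because $2\st=1$ and $\st$ is central, $\st\sp{n}\cdot 2\sp{n}=(2\st)\sp{n}=1$; multiplying the inequality $2\sp{n}g\leq\widetilde{h}$ by the positive central element $\st\sp{n}$ (legitimate by Lemma~\ref{lm:1/2Lemma}(vi)) yields $g=\st\sp{n}(2\sp{n}g)\leq\st\sp{n}\widetilde{h}$ for every $n$. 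Thus $g$ is a lower bound of $(\st\sp{n}\widetilde{h})\sb{n}$, whose infimum is $0$ by part (i) (since $\widetilde{h}\geq 0$), and hence $g\leq 0$.

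The steps I expect to require the most care are the two algebraic identities. Establishing that $\st$ is genuinely central is what makes multiplication by $\st\sp{n}$ order-preserving on the relevant pairs, which is exactly the hypothesis of Lemma~\ref{lm:1/2Lemma}(vi); without it the whole infimum machinery cannot be deployed. The decisive observation is the self-similarity $t\sb{n}=2t\sb{n+1}$: it converts the lower bound $c$ into the lower bound $2c$, forcing $c\leq 0$, whereas a naive estimate would only yield the useless $c\leq 2c$. The reduction to a positive upper bound $\widetilde{h}$ in (ii) is a minor but necessary technicality, since part (i) applies only when $0\leq a$.
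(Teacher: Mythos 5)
Your proof is correct and follows essentially the same route as the paper's: in (i) the self-similarity $t\sb{n}=2t\sb{n+1}$ turns the lower bound $c$ into the lower bound $2c$, forcing $c\leq 0$, and in (ii) the inequality $2\sp{n}g\leq a$ is converted to $g\leq \st\sp{n}a$ and fed into part (i). The only cosmetic differences are that you secure a positive upper bound via the order unit ($\widetilde{h}=h+k\cdot 1$) where the paper uses directedness ($h=a-b\leq a$), and you spell out the centrality of $\st$ and the appeal to Lemma~\ref{lm:1/2Lemma}(vi) that the paper leaves implicit.
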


\begin{proof}
(i) As $0\leq a$, the sequence $((\frac{1}{2})\sp{n}a)\sb{n\in
\Nat}$ is descending, bounded below by $0$, and its elements commute
pairwise, so by CV and duality, it has an infimum $c$ in $G$ and
$0\leq c$. Also, $c\leq(\frac{1}{2})\sp{n+1}a$ for all $n\in\Nat$,
whence $2c\leq(\frac{1}{2})\sp{n}a$ for all $n\in\Nat$, so $2c\leq
c$, i.e., $c\leq 0$, and it follows that $c=0$.

\smallskip

(ii) Suppose $g,h\in G$ and $ng\leq h$ for all $n\in\Nat$. As $G$ is
directed, there exist $a,b\in G$ with $0\leq a,b$ and $h=a-b \leq
a$, whence $ng\leq a$ for all $n\in\Nat$. In particular,
$2\sp{n}g\leq a$ for all $n\in\Nat$, and it follows that
$g\leq(\frac{1}{2})\sp{n}a$ for all $n\in\Nat$. Consequently, by
part (i), $g\leq 0$.
\end{proof}

Evidently, our prototype $\GH$ is an AH-algebra as per the following
definition.

\begin{definition} \label{df:AHAlg}
The directed group $G$ of the e-ring $(R,E)$ is an \emph{abstract
Hermitian} (AH) \emph{algebra} iff $\st\in E$, $G$ has the quadratic
annihilation (QA) property, and $G$ has the commutative Vigier (CV)
property.
\end{definition}

\begin{assumption} \label{as:AH-alg}
Henceforth, we assume that the directed group $G$ of $(R,E)$ is an
AH-algebra.
\end{assumption}

\begin{theorem} \label{th:SRE}
Let $e\in E$, let $d :=1-e$, let $d\sb{1} :=\st d$, and define the
sequence $(d\sb{n})\sb{n\in\Nat}$ recursively by $d\sb{n+1}
:=\st(d+(d\sb{n})\sp{2})$ for all $n\in\Nat$. Then $(d\sb{n})
\sb{n\in\Nat}$ is an ascending sequence of pairwise commuting
effects in $E\cap CC(e)$, so by CV it has a supremum $s$ in $G$ and
$s\in CC(\{d\sb{n}: n\in\Nat\})\subseteq CC(e)$. Then
$(1-s)\sp{2}=e$ with $1-s \in CC(e)$.
\end{theorem}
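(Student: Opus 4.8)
The plan is to proceed in three stages: first establish the structural claims about $(d_n)_{n\in\Nat}$, then invoke CV to produce $s$ and reduce the identity $(1-s)^2=e$ to a single fixed-point equation, and finally prove that equation by an order estimate, which is where the real work lies. Throughout I use that $\st$ is central and that $x\mapsto\st x$ is an order-automorphism of $G$ with inverse $x\mapsto 2x$; order preservation here amounts to $\st y\ge 0$ whenever $y\ge 0$, a product of commuting nonnegatives.

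First I would show by induction that each $d_n$ is a polynomial in $d$ (hence, as $d=1-e$, a polynomial in $e$), that $0\le d_n\le 1$, and that $d_n\le d_{n+1}$. Membership in $E$ is routine: if $d_n\in E$ then $0\le d_n^2\le d_n\le 1$ by Lemma~\ref{lm:EProperties}(i), so $0\le d+d_n^2\le 2\cdot 1$, and applying $x\mapsto\st x$ gives $d_{n+1}\in E$. For the ascending property, set $d_0:=0$ (consistent with $d_1=\st d$); then for $n\ge 1$, since $d_n$ and $d_{n-1}$ are polynomials in $d$ they commute, so $d_{n+1}-d_n=\st(d_n^2-d_{n-1}^2)=\st(d_n-d_{n-1})(d_n+d_{n-1})$ is $\st$ times a product of commuting nonnegatives, whence $d_n\le d_{n+1}$ by Definition~\ref{df:e-ring}(iii). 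Pairwise commutativity is immediate, and $d_n\in CC(e)$ because any element commuting with $e$ commutes with $d$ and with $\st$, hence with every polynomial in $d$; this same observation gives $C(e)\subseteq C(\{d_n:n\in\Nat\})$ and therefore $CC(\{d_n:n\in\Nat\})\subseteq CC(e)$, the inclusion asserted in the statement.

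With these facts, CV yields $s=\sup_n d_n$ with $s\in CC(\{d_n:n\in\Nat\})\subseteq CC(e)$, and $0\le d_1\le s\le 1$ gives $s\in E$, so $1-s\in CC(e)\cap E$. Reindexing and using that $x\mapsto\st(d+x)$ is an order-automorphism of $G$ (hence preserves suprema), $s=\sup_n d_{n+1}=\sup_n\st(d+d_n^2)=\st(d+\sup_n d_n^2)$. Thus everything reduces to the identity $\sup_n d_n^2=s^2$: granting it, $s=\st(d+s^2)$, i.e. $2s-s^2=d$, and therefore $(1-s)^2=1-2s+s^2=1-d=e$, as required.

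The main obstacle is this continuity of squaring, $\sup_n d_n^2=s^2$. Since $s\in CC(e)$ while each $d_n\in C(e)$, $s$ commutes with every $d_n$; hence $s^2-d_n^2=(s-d_n)(s+d_n)\ge 0$, so $s^2$ is an upper bound of $\{d_n^2\}$. For minimality I would use the estimate $0\le s^2-d_n^2\le 2(s-d_n)$, obtained by noting that $s+d_n\le 2\cdot 1$, so $(s-d_n)(2\cdot 1-(s+d_n))$ is a product of commuting nonnegatives, which expands to exactly this bound. Now if $t$ is any upper bound of $\{d_n^2\}$, then $s^2-t\le s^2-d_n^2\le 2(s-d_n)$ for all $n$, so $s^2-t$ is a lower bound of $\{2(s-d_n):n\in\Nat\}$; by the duality principle $\inf_n(s-d_n)=s-\sup_n d_n=0$, and $x\mapsto 2x$ preserves infima, so $\inf_n 2(s-d_n)=0$ and hence $s^2-t\le 0$. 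Therefore $s^2\le t$, which proves $s^2=\sup_n d_n^2$ and completes the argument.
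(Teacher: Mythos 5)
Your proof is correct and is, in substance, the classical Riesz--Nagy/Vigier square-root iteration that the paper does not display but simply imports from \cite[Theorem 6.1]{FSRI} with the remark that it needs only CV; your argument (ascending via the telescoping difference of squares, passage to the supremum by CV, and continuity of squaring via the estimate $0\le s^2-d_n^2\le 2(s-d_n)$) is exactly that argument and indeed uses only CV plus duality. The only tacit ingredient is the centrality of $\st$, which is itself part of the \cite{FSRI} framework the paper relies on, so this is not a gap relative to the paper.
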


\begin{proof}
The proof is identical to the proof of \cite[Theorem 6.1]{FSRI},
which obviously requires only the CV property, not the full V
property.
\end{proof}

As a consequence of Lemma \ref{lm:1/2Lemma} (v), Theorem
\ref{th:1/2+CV} (ii), and Theorem \ref{th:SRE} together with
\cite[Corollary 6.1 and Theorem 6.4]{FSRI} we have the following.

\begin{theorem} \label{th:SqRoot}
If $0\leq g\in G$, there exists a unique element in $G$, called the
square root of $g$ and denoted by $g\sp{1/2}$, such that $0\leq
g\sp{1/2}$ and $(g\sp{1/2})\sp{2}=g$; moreover, $g\sp{1/2} \in
CC(g)$.
\end{theorem}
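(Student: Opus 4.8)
The plan is to prove the theorem in two stages: existence together with the $CC(g)$-membership, obtained by rescaling $g$ into the unit interval $E$ where Theorem \ref{th:SRE} already supplies a root, and then uniqueness, which I expect to be the delicate part. Throughout I will use that $\st$ is central, i.e. $\st g=g\st$ for every $g\in G$: this follows from $2(\st g)=g=2(g\st)$ together with the fact that an archimedean directed group is torsion-free (Theorem \ref{th:1/2+CV}(ii)), and it gives $2^{m}\st^{m}=1$ for each $m$.

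For existence, since $1$ is an order unit there is $k\in\Nat$ with $g\leq k\cdot 1$; I choose an \emph{even} $m\in\Nat$ with $2^{m}\geq k$ and set $h:=\st^{m}g$. Then $h\geq 0$ and $1-h=\st^{m}(2^{m}\cdot 1-g)\geq 0$, both as products of commuting positive elements (Definition \ref{df:e-ring}(iii), Lemma \ref{lm:EProperties}(i)), so $h\in E$. Theorem \ref{th:SRE} applied to $h$ yields $t:=1-s\in E$ with $t\in CC(h)$ and $t^{2}=h$. Because $m$ is even, $g^{1/2}:=2^{m/2}t$ is a genuine integer multiple of $t$, lies in $G$, is $\geq 0$, and satisfies $(g^{1/2})^{2}=2^{m}t^{2}=2^{m}h=g$. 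Centrality of $\st$ gives $C(h)=C(g)$, hence $CC(h)=CC(g)$; and since $CC(g)$ is a subgroup closed under integer multiples, $g^{1/2}=2^{m/2}t\in CC(g)$. The only subtlety here is arithmetic: the ring supplies only the dyadic scalar $\st$, so I must make the scaling factor $2^{m}$ a perfect square in order to rescale the root back up by an honest integer multiple, and taking $m$ even does exactly this.

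For uniqueness, suppose $0\leq r\in G$ with $r^{2}=g$. Then $r$ commutes with $g=r^{2}$, so $r\in C(g)$, and as $g^{1/2}\in CC(g)$ we get $r\,C\,g^{1/2}$. Put $w:=g^{1/2}-r$, so $w$ commutes with $r$ and with $g^{1/2}$, and $w(g^{1/2}+r)=(g^{1/2})^{2}-r^{2}=0$. From $wg^{1/2}=-wr$ I obtain $w^{2}=w(g^{1/2}-r)=2wg^{1/2}=-2wr$; since squares are positive, $w^{2}\geq 0$, whence $wg^{1/2}=\st w^{2}\geq 0$ and $wr=-\st w^{2}\leq 0$. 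The decisive step is a two-sided squeeze: multiplying $wg^{1/2}\geq 0$ by the commuting positive element $g^{1/2}$ gives $wg=w(g^{1/2})^{2}\geq 0$, while multiplying $wr\leq 0$ by the commuting positive element $r$ gives $wg=wr^{2}\leq 0$; hence $wg=0$. Now QA finishes the argument: from $w(g^{1/2})^{2}=wg=0$ I get $w(g^{1/2})^{2}w=0$, so $wg^{1/2}=0$ by the QA property, and therefore $w^{2}=2wg^{1/2}=0$. Since $G$ is archimedean (Theorem \ref{th:1/2+CV}(ii)), Lemma \ref{lm:1/2Lemma}(v) with $n=2$ yields $w=0$, i.e. $r=g^{1/2}$.

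I expect the uniqueness half to be the main obstacle, precisely because the naive route---factoring $0=(g^{1/2}-r)(g^{1/2}+r)$ and expanding---collapses to identities such as $(g-rg^{1/2})(g^{1/2}+r)=0$ that hold automatically once $r^{2}=g=(g^{1/2})^{2}$ and therefore carry no information. What breaks the symmetry is order rather than algebra: the single element $wg$ is forced to be simultaneously $\geq 0$ and $\leq 0$ after multiplying by the two distinct positive square roots, and only then do the annihilation hypothesis (QA) and the archimedean property convert $wg=0$ into $w=0$.
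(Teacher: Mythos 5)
Your proof is correct, and it is worth noting that the paper itself gives no argument here: it simply cites Lemma \ref{lm:1/2Lemma}(v), Theorem \ref{th:1/2+CV}(ii), Theorem \ref{th:SRE}, and two results of [FSRI] (Corollary 6.1 and Theorem 6.4). Your existence argument uses exactly the ingredients the paper points to --- rescale $g$ into $E$, apply Theorem \ref{th:SRE}, rescale back --- and your insistence on a dyadic scaling factor $2^{m}$ with $m$ even is well judged, since Theorem \ref{th:SqRoot} precedes Theorem \ref{th:Jordan} in the paper's logical order and the real vector space structure is not yet available. Your uniqueness argument is a clean, self-contained replacement for the citation of [FSRI, Theorem 6.4]: the two-sided squeeze forcing $wg\geq 0$ (by multiplying $wg^{1/2}\geq 0$ by $g^{1/2}$) and $wg\leq 0$ (by multiplying $wr\leq 0$ by $r$), followed by QA to get $wg^{1/2}=0$ and then Lemma \ref{lm:1/2Lemma}(v) to kill $w$ from $w^{2}=0$, uses only commuting products of positive elements (Definition \ref{df:e-ring}(iii)) and so never needs the adjoint-style positivity $xax\geq 0$ that the classical Riesz--Nagy argument relies on. The one place where your justification is thinner than it should be is the centrality of $\st$: from $2(\st g)=g=2(g\st)$ you cannot directly apply torsion-freeness of $G$ to $\st g-g\st$, because that element is not a priori in $G$. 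The fact is true and is part of the [FSRI, Section 4] background the paper assumes (it is implicit in Lemma \ref{lm:1/2Lemma}(iii)), and it can be derived from the axioms: for $a\in E\sp{+}$ one has $a\st+\st a=a\sp{2}+\st\sp{2}-(a-\st)\sp{2}\in G$ by Definition \ref{df:e-ring}(iii) and (vi), then $2(a\st+\st a)=2a$ and torsion-freeness of the archimedean directed group $G$ give $a\st+\st a=a=2a\st$, whence $\st a=a\st$. With that repair your argument is complete.
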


By Definition \ref{df:e-ring} (vi), if $g=h\sp{2}$ for some $h\in
G$, then $0\leq g$. Conversely, by Theorem \ref{th:SqRoot}, if
$0\leq g$, then there exists $h\in G$, namely $h=g\sp{1/2}$, such
that $g=h\sp{2}$. Thus, \emph{the positive cone in $G$ consists
precisely of squares of elements of $G$.}

As usual, we say that an element $g\in G$ is \emph{invertible} iff
there is an element $h\in G$ such that $gh=hg=1$. If such an $h$
exists, it is unique, it is called the \emph{inverse} of $g$, and it
is written as $g\sp{-1} :=h$.

\begin{theorem}  \label{th:PosInverse}
Let $g\in G$ with $0\leq g$. Then $g$ is invertible iff there exists
$M\in\Nat$ such that $1\leq Mg$. Moreover, if $g$ is invertible,
then $0\leq g\sp{-1}\in CC(g)$.
\end{theorem}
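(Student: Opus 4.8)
The plan is to treat the two implications of the equivalence separately and then to read off the ``moreover'' clause from the construction used in the harder implication. Throughout I would use that $1$ is an order unit, that squares are $\ge 0$ by Definition~\ref{df:e-ring}~(vi), and the monotonicity of multiplication by a commuting nonnegative element (Lemma~\ref{lm:1/2Lemma}~(vi)).

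\emph{Forward direction.} Suppose $g$ is invertible with inverse $h\in G$, so $gh=hg=1$ and $g,h$ commute. First I would note $0\le h\sp{2}$ (a square), and since $1$ is an order unit there is $M\in\Nat$ with $h\sp{2}\le M\cdot 1$. Multiplying on the right by $g\sp{2}\ge 0$, which commutes with $h\sp{2}$ and $1$, via Lemma~\ref{lm:1/2Lemma}~(vi) gives $h\sp{2}g\sp{2}\le Mg\sp{2}$; but $h\sp{2}g\sp{2}=(hg)\sp{2}=1$, so $1\le Mg\sp{2}$. Using $0\le g\le N\cdot 1$ and multiplying by $g\ge 0$ yields $g\sp{2}\le Ng$, whence $1\le Mg\sp{2}\le MNg$. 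Thus $1\le M'g$ with $M':=MN$.

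\emph{Backward direction, the main work.} Assume $1\le Mg$, set $a:=Mg$, so $1\le a$, and pick $m\in\Nat$ with $a\le 2\sp{m}\cdot 1$. Recalling that $\st$ is central, so that halving $x\mapsto\st x$ is a well-defined operation $G\to G$ (cf.\ \cite{FSRI}), put $\epsilon:=\st\sp{m}$ and $t:=1-\epsilon a$; then $0\le t\le\rho\cdot 1$ with $\rho:=1-2\sp{-m}\in(0,1)$, and $\epsilon a=1-t$. I would form the Neumann-type partial sums
\[
x\sb{n}:=\epsilon\sum\sb{k=0}\sp{n-1}t\sp{k}\in G,
\]
which commute pairwise, ascend (as $x\sb{n+1}-x\sb{n}=\epsilon t\sp{n}\ge 0$), and satisfy $x\sb{n}\le 1$ (using $t\sp{k}\le\rho\sp{k}\cdot 1$ and $\epsilon\sum\sb{k}\rho\sp{k}=1$). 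By CV the supremum $x:=\sup\sb{n}x\sb{n}$ exists, lies in $CC(\{x\sb{n}:n\in\Nat\})\subseteq CC(g)$, and $0\le x\le 1$. A telescoping computation gives $ax\sb{n}=(1-t)\sum\sb{k=0}\sp{n-1}t\sp{k}=1-t\sp{n}$.

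\emph{Passing to the limit, and the obstacle.} The crux is to move the supremum across the fixed factor $a$. Rather than proving a general order-continuity lemma, I would control the tail geometrically: for $p\ge n$ one has $x\sb{p}=x\sb{n}+t\sp{n}x\sb{p-n}\le x\sb{n}+\rho\sp{n}\cdot 1$, so $x\sb{n}+\rho\sp{n}\cdot 1$ bounds the tail and hence $0\le x-x\sb{n}\le\rho\sp{n}\cdot 1$. Multiplying by $a$, which commutes with $x$ and each $x\sb{n}$, gives $0\le a(x-x\sb{n})\le 2\sp{m}\rho\sp{n}\cdot 1$, so with $ax\sb{n}=1-t\sp{n}$,
\[
1-\rho\sp{n}\cdot 1\le ax\le 1+2\sp{m}\rho\sp{n}\cdot 1\quad\text{for all }n.
\]
Choosing $r\in\Nat$ with $\rho\sp{r}\le\st$ (possible since $\rho<1$), the subsequence satisfies $\rho\sp{rk}\cdot 1\le\st\sp{k}$, so Theorem~\ref{th:1/2+CV}~(i) forces $\inf\sb{n}\rho\sp{n}\cdot 1=0$ and $\inf\sb{n}2\sp{m}\rho\sp{n}\cdot 1=0$; the two displayed inequalities then yield $ax\ge 1$ and $ax\le 1$, i.e.\ $ax=1$, with $xa=1$ following as $x$ commutes with $a$. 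Hence $a$, and therefore $g$, is invertible with $g\sp{-1}=Mx$. Since $x\ge 0$ and $x\in CC(g)$, the element $Mx$ is $\ge 0$ and lies in $CC(g)$, and by uniqueness of inverses it equals $g\sp{-1}$ whenever $g$ is invertible; combined with the forward direction this gives the ``moreover'' clause. I expect the main obstacle to be exactly this limit step: in the abstract setting multiplication need not be order-continuous, so $a\sup\sb{n}x\sb{n}=\sup\sb{n}ax\sb{n}$ cannot be assumed and must be extracted from the explicit decay $x-x\sb{n}\le\rho\sp{n}\cdot 1$ via Theorem~\ref{th:1/2+CV}~(i); a secondary point is the availability of dyadic scalar multiplication (centrality of $\st$ and $\epsilon a=1-t\in G$), which I would cite from \cite{FSRI}.
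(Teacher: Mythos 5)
Your argument is correct, but it is worth noting that the paper does not actually supply a proof here: it simply asserts that the proof of Lemma 7.1 of \cite{FSRI} goes through with CV in place of V. Your Neumann-series construction is therefore a genuinely self-contained alternative, and it correctly negotiates the two points that make a from-scratch proof delicate at this stage of the paper. First, real scalar multiplication is not yet available (Theorem \ref{th:Jordan} comes \emph{after} Theorem \ref{th:PosInverse}), and you rightly confine yourself to dyadic scalars built from $\st$; the needed inequalities such as $t\sp{k}\leq\rho\sp{k}\cdot 1$ then follow by induction from Lemma \ref{lm:1/2Lemma} (vi) since $\rho$ is dyadic. Second, Lemma \ref{lm:MonotoneSupInf} (which would give $a\sup\sb{n}x\sb{n}=\sup\sb{n}ax\sb{n}$ directly) also appears only later in the paper, and your explicit tail bound $0\leq x-x\sb{n}\leq\rho\sp{n}\cdot 1$, obtained from $x\sb{p}=x\sb{n}+t\sp{n}x\sb{p-n}$, together with Theorem \ref{th:1/2+CV} (i) applied along a subsequence with $\rho\sp{r}\leq\st$, is a clean way to avoid it and keeps the argument non-circular. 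The forward direction via $1=h\sp{2}g\sp{2}\leq Mg\sp{2}\leq MNg$ and the derivation of the ``moreover'' clause from uniqueness of inverses are both sound. What your route buys is independence from the external reference; what the paper's route buys is brevity.
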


\begin{proof}
The proof of \cite[Lemma 7.1]{FSRI} goes through as it obviously
requires only the CV property rather than the stronger V property.
\end{proof}

Equipped with the Jordan product $(A,B)\mapsto\frac{1}{2}(AB+BA)$,
our prototype $\GH$ is a Jordan algebra. More generally, we have the
following result.

\begin{theorem} \label{th:Jordan}
$G$ can be organized into an archimedean partially ordered real
vector space and, as such, it is a Jordan algebra with respect to
the Jordan product $(g,h)\mapsto\st(gh+hg)$.
\end{theorem}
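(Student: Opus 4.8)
The plan is to establish the three claims in sequence: that $G$ carries the structure of a real vector space, that this vector space is archimedean, and that the Jordan product $(g,h)\mapsto\st(gh+hg)$ makes it a Jordan algebra. The first task is to produce scalar multiplication by arbitrary reals. The group $G$ already admits multiplication by integers, and since $\st\in E$ exists, we have a halving operation, so multiplication by any dyadic rational $m/2^n$ is available via $(\frac{1}{2})^n$ acting as in the proof of Theorem \ref{th:1/2+CV}. To extend to all of $\reals$, I would fix $g\in G$ and, for a real number $\lambda$, approximate $\lambda$ by dyadic rationals from above and below; using that $1$ is an order unit and that $G$ is archimedean (Theorem \ref{th:1/2+CV} (ii)), I would define $\lambda g$ as the appropriate supremum or infimum of dyadic multiples, invoking the CV property to guarantee these suprema exist. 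One must check that the dyadic multiples of a fixed $g$ form a pairwise commuting monotone family so that CV applies, and that the resulting operation satisfies the vector-space axioms (distributivity over both $\reals$ and $G$, associativity of scalars).

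Once scalar multiplication is in hand, archimedean-ness in the vector-space sense follows almost directly from Theorem \ref{th:1/2+CV} (i): if $g\leq \lambda h$ for some fixed $0\leq h$ and all real $\lambda>0$ small, or equivalently if $ng\leq h$ for all $n\in\Nat$, we already know $g\leq 0$ from Theorem \ref{th:1/2+CV} (ii). The genuine content to verify is that the order and the new scalar multiplication are compatible, i.e.\ $0\leq g$ and $0\leq\lambda$ imply $0\leq\lambda g$, which should be immediate from the monotone-limit construction, and that the partial order is still translation-invariant under the full vector space structure.

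For the Jordan-algebra claim I would set $g\circ h:=\st(gh+hg)$ and first confirm this is a well-defined binary operation on $G$: by Lemma \ref{lm:1/2Lemma} (iii), $\st(gh+hg)\in G$, so the product does not escape $G$. Commutativity $g\circ h=h\circ g$ is manifest from the symmetric form. Bilinearity over $\reals$ reduces to bilinearity of $(g,h)\mapsto gh+hg$ in the ring $R$ together with the scalar-multiplication compatibility established above. The remaining axiom is the Jordan identity $(g\circ g)\circ(h\circ g)=((g\circ g)\circ h)\circ g$; here I would expand both sides using associativity and distributivity in the underlying associative ring $R$ and check that they agree, exactly as for any special Jordan algebra arising from an associative product. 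This computation is purely formal and holds in any associative ring, so it transfers to $G$ once we know the relevant products lie in $G$.

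The main obstacle is the first step: rigorously constructing real scalar multiplication and verifying the vector-space axioms. The delicate points are showing that the monotone dyadic approximations converge to a well-defined element independent of the approximating sequence, that the supremum produced by CV is compatible across different scalars (so that, for instance, $(\lambda+\mu)g=\lambda g+\mu g$), and that continuity of the group operations with respect to these order-limits is strong enough to push distributivity through the limit. I expect to lean heavily on the uniqueness of suprema, on $1$ being an order unit to control approximations uniformly, and on the CC-membership clause in CV to keep all the approximants within a single commuting family so that the ring multiplication behaves well. By contrast, archimedean-ness and the Jordan identity are comparatively routine once the scalar structure is secured.
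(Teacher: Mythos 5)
Your proposal is correct in outline, but it takes on far more than the paper does: the paper's entire proof of Theorem \ref{th:Jordan} consists of the remark that the argument of \cite[Theorem 7.2]{FSRI} uses only the CV property rather than the full V property, plus an appeal to Theorem \ref{th:1/2+CV} (ii) for archimedeanness. You are, in effect, reconstructing the content of that citation from scratch. Your decomposition is the right one, and your second and third steps match the paper where they overlap: archimedeanness comes from Theorem \ref{th:1/2+CV} (ii) exactly as you say, and the Jordan identity $(g\circ g)\circ(h\circ g)=((g\circ g)\circ h)\circ g$ is indeed a formal consequence of associativity in $R$ once Lemma \ref{lm:1/2Lemma} (iii) guarantees that $\st(gh+hg)\in G$. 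The place where your sketch leaves real work undone is the one you yourself flag: the interaction of the ring multiplication with the order-limit definition of $\lambda g$. Two specific cautions there. First, for a non-positive $g$ the dyadic multiples $qg$ are not monotone in $q$, so the CV-supremum construction applies directly only to $0\leq g$; you must decompose a general $g$ as a difference of positive elements (using directedness) and verify that the result is independent of the decomposition. Second, bilinearity of the Jordan product, i.e.\ $\st((\lambda g)h+h(\lambda g))=\lambda\,\st(gh+hg)$, does not follow merely from keeping the approximants of $\lambda g$ inside a single commuting family, because $h$ need not commute with $g$; one must pass the limit through the product by some additional device (e.g.\ reduction to positive elements together with Lemma \ref{lm:MonotoneSupInf}). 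These are fillable gaps rather than wrong turns, but they are precisely the details the paper outsources to \cite{FSRI}.
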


\begin{proof}
The full V property is not needed for the proof of \cite[Theorem
7.2]{FSRI}---only CV is required. Thus, $G$ can be organized into a
partially ordered real vector space that is also a Jordan algebra
with the indicated Jordan product, and $G$ is archimedean by Theorem
\ref{th:1/2+CV} (ii).
\end{proof}

\noindent\emph{In the sequel, we understand that $G$ is organized
into a partially ordered real vector space as per Theorem
\ref{th:Jordan}. Moreover, we make routine use of Theorems
\ref{th:SqRoot}, \ref{th:PosInverse}; Lemmas \ref{lm:EProperties},
\ref{lm:1/2Lemma}, \ref{lm:QALemma}; and the following lemma.}

\begin{lemma} \label{lm:lambdaLemma}
If $0\leq g\sb{i}\in G$ for $i=1,2,...,n$, there exists $0<\lambda
\in\reals$ such that $\lambda g\sb{i}\in E$ for $i=1,2,...,n$.
\end{lemma}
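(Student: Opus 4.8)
The plan is to exploit the fact, recorded just after Standing Assumptions \ref{as:e-Ring}, that $1$ is an order unit in $G$, together with the real vector space structure furnished by Theorem \ref{th:Jordan}. First I would treat each $g\sb{i}$ separately: since $1$ is an order unit and $g\sb{i}\in G$, there is an integer $m\sb{i}\in\Nat$ with $g\sb{i}\leq m\sb{i}\cdot 1$; combining this with the hypothesis $0\leq g\sb{i}$ gives $0\leq g\sb{i}\leq m\sb{i}\cdot 1$ for every $i$.

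Next I would pass to a single scalar that works for all $n$ of the $g\sb{i}$ simultaneously by setting $M :=\max\{m\sb{1},\ldots,m\sb{n}\}$ and $\lambda :=1/M$ (any positive real $\leq 1/M$ would serve equally well). Because $G$ is now a partially ordered real vector space, its positive cone $E\sp{+}$ is closed under multiplication by positive reals, so $0\leq g\sb{i}$ yields $0\leq\lambda g\sb{i}$. For the upper bound, I would note that $m\sb{i}\leq M$ forces $g\sb{i}\leq m\sb{i}\cdot 1\leq M\cdot 1$, and multiplying this inequality by $\lambda>0$ gives $\lambda g\sb{i}\leq\lambda M\cdot 1=1$. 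Hence $0\leq\lambda g\sb{i}\leq 1$, i.e., $\lambda g\sb{i}\in E$ for each $i$, which is exactly what is claimed.

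The one point requiring care is the interface between the group-theoretic order unit property---where $m\sb{i}\cdot 1$ denotes the $m\sb{i}$-fold sum $1+\cdots+1$---and the real scalar multiplication supplied by Theorem \ref{th:Jordan}. I must verify that the integer $m\sb{i}$ acting as a group multiple agrees with the real scalar $m\sb{i}$ acting in the vector space, and that scaling an inequality by a positive real preserves the order. Both facts are built into the assertion that $G$ is a partially ordered real vector space whose order extends the given group order, so this is the only genuine obstacle, and it is a mild one; once it is dispatched, the estimate above closes the argument immediately.
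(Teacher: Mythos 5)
Your proposal is correct and follows essentially the same route as the paper: both invoke the order unit property of $1$ to bound each $g\sb{i}$ above by an integer multiple of $1$, take a common bound $N$ (your $M$), and set $\lambda :=1/N$, using the real vector space structure from Theorem \ref{th:Jordan} to conclude $0\leq\lambda g\sb{i}\leq 1$. Your extra remarks on reconciling integer group multiples with real scalar multiplication are sound but cover ground the paper treats as immediate.
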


\begin{proof} As $1$ is an order unit in $G$, there exists $N\in\Nat$
such that $g\sb{1},g\sb{2},...,g\sb{n}\leq N\cdot 1$.  Let $\lambda
:=1/N$.
\end{proof}

\begin{lemma} \label{lm:MonotoneSupInf}
Let $g\in G$ be the supremum {\rm (}respectively, the infimum{\rm )}
in $G$ of the ascending {\rm (}respectively, descending{\rm )}
sequence $(g\sb{n})\sb{n\in\Nat}\subseteq G$ of pairwise commuting
elements. Suppose $0\leq h\in G$ and $hCg\sb{n}$ for all $n\in\Nat$.
Then $gh=hg$ is the supremum {\rm (}respectively, the infimum{\rm )}
in $G$ of $(g\sb{n}h)\sb{n\in\Nat}$.
\end{lemma}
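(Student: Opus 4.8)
The plan is to reduce to the supremum case and then to defeat the non-invertibility of $h$ by an $\epsilon$-perturbation. By the duality principle (apply the result to $(-g_n)_{n\in\Nat}$, which is ascending with supremum $-g$), it suffices to treat the case where $(g_n)_{n\in\Nat}$ is ascending with supremum $g$. First I would record the elementary facts. By the CV property the supremum $g$ lies in $CC(\{g_n:n\in\Nat\})$, while the hypothesis $hCg_n$ for all $n$ says $h\in C(\{g_n:n\in\Nat\})$; hence $g$ commutes with $h$, and $gh=hg\in G$ by Lemma \ref{lm:1/2Lemma}(i), and likewise $g_nh=hg_n\in G$. Using $g_n\le g$ together with $0\le h\in C(g_n)\cap C(g)$, Lemma \ref{lm:1/2Lemma}(vi) gives $g_nh\le gh$, so $gh$ is an upper bound of the (ascending) sequence $(g_nh)_{n\in\Nat}$; the content of the proof is that it is the \emph{least} one.

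The heart of the argument is to realize multiplication by a positive invertible factor as an order-automorphism. Fix a real number $\epsilon>0$ and set $h_\epsilon:=h+\epsilon 1$. Since $0\le h$ we have $\epsilon 1\le h_\epsilon$, so $1\le Mh_\epsilon$ for any integer $M\ge 1/\epsilon$, and Theorem \ref{th:PosInverse} makes $h_\epsilon$ invertible. Let $k:=h_\epsilon^{1/2}$ as in Theorem \ref{th:SqRoot}, so $0\le k$ and $k\in CC(h_\epsilon)$; because $h_\epsilon^{-1}\in C(h_\epsilon)$, the element $k$ commutes with $h_\epsilon^{-1}$, and then $h_\epsilon^{-1}k$ is a two-sided inverse of $k$, so $0\le k^{-1}$ by Theorem \ref{th:PosInverse} again. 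Now the conjugation $\phi(z):=kzk$ maps $G$ into $G$ (Lemma \ref{lm:1/2Lemma}(ii)); it is order-preserving, since $0\le z\Rightarrow 0\le kzk$ by Definition \ref{df:e-ring}(iv) and $\phi(y)-\phi(x)=\phi(y-x)$ by distributivity, and it is a bijection whose inverse $w\mapsto k^{-1}wk^{-1}$ is order-preserving for the same reason. Thus $\phi$ is an order-automorphism of $G$. Since $g_n$ and $g$ commute with $h_\epsilon$, hence with $k$, we get $\phi(g_n)=g_nk^2=g_nh_\epsilon$ and $\phi(g)=gh_\epsilon$; as an order-automorphism preserves suprema, it follows that $gh_\epsilon=\sup_n(g_nh_\epsilon)$.

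Finally I would remove $\epsilon$ by a cancellation. Let $b\in G$ be any upper bound of $(g_nh)_{n\in\Nat}$. From $g_n\le g$ we obtain $\epsilon g_n\le\epsilon g$, whence $g_nh_\epsilon=g_nh+\epsilon g_n\le b+\epsilon g$ for every $n$; thus $b+\epsilon g$ is an upper bound of $(g_nh_\epsilon)_n$, so $gh_\epsilon\le b+\epsilon g$. Since $gh_\epsilon=gh+\epsilon g$, cancelling $\epsilon g$ in the group $G$ yields $gh\le b$, and therefore $gh$ is the least upper bound of $(g_nh)_{n\in\Nat}$. The one genuine obstacle throughout is the absence of any ``division by $h$'' when $h$ is not invertible; the shift to the invertible $h_\epsilon$, the order-automorphism furnished by conjugation by $h_\epsilon^{1/2}$, and the concluding cancellation are exactly the devices that circumvent it.
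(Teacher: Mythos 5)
Your proof is correct, but it takes a genuinely different route from the paper's. The paper rescales so that $g-g\sb{n}$ and $h$ both lie in $E$ and then invokes the elementary inequality of Lemma \ref{lm:EProperties} (i) for commuting effects, $ef\leq e$, to obtain $\lambda(g-g\sb{n})h\leq g-g\sb{n}$; for any upper bound $k$ of $(g\sb{n}h)\sb{n\in\Nat}$ this yields $g\sb{n}\leq g-\lambda(gh-k)$ for all $n$, and taking the supremum forces $\lambda(gh-k)\leq 0$, i.e., $gh\leq k$. You instead perturb $h$ to the invertible $h\sb{\epsilon}=h+\epsilon 1$, realize multiplication by $h\sb{\epsilon}$ on the commuting family as the conjugation $z\mapsto kzk$ with $k=h\sb{\epsilon}\sp{1/2}$, observe that this is an order-automorphism of $G$ (hence preserves suprema), and then cancel the $\epsilon g$ term exactly. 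Your argument leans on heavier machinery already available at this point (Theorems \ref{th:SqRoot} and \ref{th:PosInverse} and the double-commutant placement of square roots and inverses), whereas the paper's is more elementary, using only the basic effect inequality and a rescaling into $E$; on the other hand, your order-automorphism device isolates a reusable principle (conjugation by a positive invertible element preserves all existing suprema, not merely those of commuting sequences), and the exact cancellation of $\epsilon g$ neatly avoids any limiting or archimedean argument. The individual steps all check out: the commutation of $k$ with $g$ and the $g\sb{n}$ follows from $k\in CC(h\sb{\epsilon})$, the positivity of $kzk$ for $0\leq z$ is Definition \ref{df:e-ring} (iv), and your duality reduction to the ascending case is the same as the paper's.
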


\begin{proof}
We prove the lemma for an ascending sequence---the result for a
descending sequence then follows by duality. By CV, we have $gCh$,
so $gh=hg\in G$. As $0\leq g-g\sb{1}$ and $0\leq h$, there exists
$0\leq\lambda\in\reals$ such that $\lambda(g-g\sb{1}),\, \lambda
h\in E$. For all $n\in\Nat$, $0\leq g-g\sb{n}\leq g-g\sb{1}$, so
$0\leq\lambda(g-g\sb{n})\leq \lambda(g-g\sb{1})\leq 1$, whence
$\lambda(g-g\sb{n}),\,\lambda h\in E$. Also, $\lambda(g-g\sb{n})C
\lambda h$, whence by Lemma \ref{lm:EProperties} (i), $\lambda
(g-g\sb{n})\lambda h\leq \lambda(g-g\sb{n})$, i.e.,
\[
\lambda(g-g\sb{n})h\leq g-g\sb{n}\text{\ for all\ }n\in\Nat.
\]
As $g\sb{n}\leq g$ and $0\leq h\in C(g\sb{n})\cap C(g)$, Lemma
\ref{lm:1/2Lemma} (vi) implies that $g\sb{n}h\leq gh$ for all
$n\in\Nat$. Suppose $k\in G$ and $g\sb{n}h\leq k$ for all
$n\in\Nat$. We have to show that $gh\leq k$. We have
\[
\lambda(gh-k)\leq\lambda(gh-g\sb{n}h)=\lambda(g-g\sb{n})h \leq
g-g\sb{n}\text{\ for all\ }n\in\Nat,
\]
whence
\[
g\sb{n}\leq g-\lambda(gh-k) \text{\ for all\ }n\in\Nat,
\]
and it follows that $g\leq g-\lambda(gh-k)$. Therefore,
$\lambda(gh-k)\leq 0$, so $gh-k\leq 0$, i.e., $gh\leq k$.
\end{proof}

\begin{theorem} \label{th:MonotonicitySR}
Let $g,h\in G$ with $gCh$ and $0\leq g\leq h$. Then: {\rm (i)\ }
$g\sp{2}\leq h\sp{2}$ and {\rm (ii)\ } $g\sp{1/2}\leq h\sp{1/2}$.
\end{theorem}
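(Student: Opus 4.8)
The plan is to dispatch (i) by a direct factorization and to reduce (ii) to a self-contained claim about square roots that I then settle by an invertibility-plus-Archimedean argument.

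For (i), since $gCh$ the elements $h-g$ and $h+g$ commute, and because $0\le g\le h$ both are $\ge 0$. Their product is $(h-g)(h+g)=h^2-g^2$ (the cross terms $hg-gh$ cancel precisely because $gh=hg$), so by Definition \ref{df:e-ring}(iii), a product of commuting positives is positive, and hence $0\le h^2-g^2$, i.e. $g^2\le h^2$. That is all of (i); the content of the theorem lies in (ii).

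For (ii) I would first record that the square roots commute. Writing $a:=g^{1/2}$ and $b:=h^{1/2}$, Theorem \ref{th:SqRoot} gives $a\in CC(g)$, $b\in CC(h)$ with $a,b\ge 0$, $a^2=g$, $b^2=h$. Since $h\in C(g)$ by hypothesis, $a\in CC(g)$ yields $aCh$; then $a\in C(h)$ together with $b\in CC(h)$ yields $aCb$. Thus it suffices to prove the claim: \emph{if $0\le a,b$ with $aCb$ and $a^2\le b^2$, then $a\le b$.} To prove it I would perturb to gain invertibility. Fix a real $\epsilon>0$ and set $b_{\epsilon}:=b+\epsilon\1\ge\epsilon\1$; by Theorem \ref{th:PosInverse} $b_{\epsilon}$ is invertible with $0\le b_{\epsilon}^{-1}\in CC(b_{\epsilon})$, and as $a$ commutes with $b_{\epsilon}$ it commutes with $b_{\epsilon}^{-1}$. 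Note $a^2\le b^2\le b_{\epsilon}^2$. Put $c:=b_{\epsilon}^{-1}a$; since $b_{\epsilon}^{-1},a\ge 0$ commute, $c\ge 0$, and multiplying $a^2\le b_{\epsilon}^2$ by $b_{\epsilon}^{-2}\ge 0$ via Lemma \ref{lm:1/2Lemma}(vi) gives $c^2=b_{\epsilon}^{-2}a^2\le\1$. The key algebraic step is then that $\1+c\ge\1$ is invertible with $0\le(\1+c)^{-1}$, and $\1-c=(\1-c^2)(\1+c)^{-1}$ is a product of the commuting positives $\1-c^2$ and $(\1+c)^{-1}$, whence $c\le\1$. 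Multiplying by $b_{\epsilon}\ge 0$ (again Lemma \ref{lm:1/2Lemma}(vi)) gives $a=b_{\epsilon}c\le b_{\epsilon}=b+\epsilon\1$. Since this holds for every $\epsilon>0$, taking $\epsilon=1/n$ yields $n(a-b)\le\1$ for all $n\in\Nat$, so the Archimedean property (Theorem \ref{th:1/2+CV}(ii)) forces $a-b\le 0$, i.e. $a\le b$.

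I expect the main obstacle to be (ii), and within it the possible non-invertibility of $h^{1/2}$: the clean identity $\1-c=(\1-c^2)(\1+c)^{-1}$ only makes sense once an inverse exists, which is why I introduce $b_{\epsilon}$ and then remove the perturbation through the Archimedean property. The two workhorses throughout are that products of commuting positives are positive and that multiplication by a commuting positive preserves $\le$; the one point demanding constant care is bookkeeping of commutativity, so that every such product and every factorization is legitimate.
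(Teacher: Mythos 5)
Your argument is correct, and it reaches the conclusion by a genuinely different route than the paper. For (i) the paper simply cites \cite[Lemma 2.7 (iii)]{FRwE}, whereas you give the short factorization $(h-g)(h+g)=h^2-g^2$ directly from Definition \ref{df:e-ring}\,(iii); that is fine. The real divergence is in (ii). The paper rescales $g,h$ to effects $e\leq f$, returns to the recursive sequences $d_{n+1}=\frac12(d+d_n^2)$, $c_{n+1}=\frac12(c+c_n^2)$ of Theorem \ref{th:SRE} whose suprema encode $e^{1/2}$ and $f^{1/2}$, and proves $c_n\leq d_n$ by induction using part (i); monotonicity of the square root then falls out by comparing the suprema. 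You instead observe that the hypothesis $g\leq h$ is literally $a^2\leq b^2$ for $a=g^{1/2}$, $b=h^{1/2}$ (so you never need (i) for (ii)), verify $aCb$ via the double-commutant clauses of Theorem \ref{th:SqRoot}, and prove the standalone claim that commuting positives with $a^2\leq b^2$ satisfy $a\leq b$ by perturbing to the invertible $b_\epsilon=b+\epsilon\cdot 1$, extracting $c=b_\epsilon^{-1}a$ with $c^2\leq 1$, using the factorization $1-c=(1-c^2)(1+c)^{-1}$ of commuting positives to get $c\leq 1$, and finally removing $\epsilon$ by archimedeanness (Theorem \ref{th:1/2+CV}\,(ii)). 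All the tools you invoke (Theorem \ref{th:PosInverse}, Lemma \ref{lm:1/2Lemma}\,(vi), the real vector space structure from Theorem \ref{th:Jordan}, archimedeanness) are established before this theorem in the paper, so there is no circularity, and the commutativity bookkeeping checks out at each product. The trade-off: the paper's proof stays close to the construction of the square root and needs only CV plus part (i), while yours leans on the invertibility and archimedean machinery but isolates a reusable lemma ($a^2\leq b^2\Rightarrow a\leq b$ for commuting positives) and decouples (ii) from (i) entirely.
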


\begin{proof}
(i) Follows from \cite[Lemma 2.7 (iii)]{FRwE}.

\smallskip

\noindent (ii) Choose $0<\lambda\in\reals$ such that  $e := \lambda
g\in E$ and $f :=\lambda h\in E$. Then $eCf$, and $e\leq f$. As
$e\sp{1/2}=\lambda\sp{1/2}g\sp{1/2}$ and $f\sp{1/2}=
\lambda\sp{1/2}h\sp{1/2}$, it will be sufficient to prove that
$e\sp{1/2}\leq f\sp{1/2}$. Define
\[
d :=1-e,\ \  c :=1-f,\ \  d\sb{1} :=\st d,\ \  c\sb{1} :=\st c
\]
and by recursion, for all $n\in\Nat$,
\[
d\sb{n+1} :=\st(d+(d\sb{n})\sp{2})\text{\ \ and\ \ } c\sb{n+1}
 :=\st(c+(c\sb{n})\sp{2}).
\]
By Theorem \ref{th:SRE}, $(d\sb{n})\sb{n\in\Nat}$ and
$(c\sb{n})\sb{n\in\Nat}$ have suprema $s$ and $t$, respectively, in
$G$; moreover, $e\sp{1/2}=1-s$ and $f\sp{1/2}=1-t$. As $e\leq f$, we
have $c\leq d$, $c\sb{1}\leq d\sb{1}$, and by part (i) and induction
on $n$, $c\sb{n}\leq d\sb{n}$ for all $n\in\Nat$. Therefore, $t\leq
s$, so $e\sp{1/2}=1-s\leq 1-t=f\sp{1/2}$.
\end{proof}

\section{Carrier Projections} 

We maintain Standing Assumption \ref{as:AH-alg}

\begin{lemma} \label{lm:(1-e)^n}
Let $e\in E$. Then $((1-e)\sp{n})\sb{n\in\Nat}$ is a descending
sequence of pairwise commuting effects in $E$, whence by CV it has
an infimum $q$ in $G$ and $q\in CC(e)$. Moreover, $1-q \in CC(e)\cap
P$, and for all $h\in G$, $eh=0\Leftrightarrow (1-q)h=0$.
\end{lemma}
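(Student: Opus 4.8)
The plan is to set $d := 1-e$ and to work throughout with the sequence $(d^n)_{n\in\Nat}$. First I would verify the opening assertions. That the powers commute pairwise is immediate from $d^md^n=d^{m+n}=d^nd^m$, and an induction using Lemma~\ref{lm:EProperties} (i), applied to the commuting pair $d,d^n$, shows simultaneously that $d^{n+1}=d\,d^n$ satisfies $0\leq d^{n+1}\leq d^n\leq 1$; hence $(d^n)_{n\in\Nat}$ is a descending sequence of pairwise commuting effects in $E$. By the CV property together with the duality principle it has an infimum $q$ in $G$ with $q\in CC(\{d^n:n\in\Nat\})$. Since anything commuting with $e$ also commutes with $d=1-e$, hence with each $d^n$, we have $C(e)\subseteq C(\{d^n:n\in\Nat\})$, so $CC(\{d^n\})\subseteq CC(e)$ and therefore $q\in CC(e)$. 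As $CC(e)$ is an additive subgroup of $G$ containing $1$, it follows at once that $1-q\in CC(e)$.

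The crux is to prove $q\in P$ and to extract the relation $eq=0$. I would apply Lemma~\ref{lm:MonotoneSupInf} to the descending sequence $(d^n)$ with multiplier $d$ (legitimate since $0\leq d$ and $d\,C\,d^n$); its conclusion is $dq=qd=\inf_n d^{n+1}$, and since deleting the first term of a descending sequence leaves the infimum unchanged, $dq=q$. Writing $d=1-e$ then gives $eq=0$, whence $qe=0$ by Lemma~\ref{lm:QALemma}. Iterating $dq=q$ yields $d^nq=q$ for every $n$, so a second application of Lemma~\ref{lm:MonotoneSupInf}, now with multiplier $q$ (here $0\leq q$ and $q\,C\,d^n$ because $q\in CC(\{d^n\})$), gives $q^2=\inf_n(d^nq)=\inf_n q=q$. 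Thus $q\in P$, and with the previous paragraph $1-q\in CC(e)\cap P$. From $eq=qe=0$ one also records $e(1-q)=(1-q)e=e$, i.e.\ $e\leq 1-q$ by Lemma~\ref{lm:EProperties} (ii).

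It remains to prove the equivalence $eh=0\Leftrightarrow(1-q)h=0$. The direction $(1-q)h=0\Rightarrow eh=0$ is short: $(1-q)h=0$ means $qh=h$, so $eh=e(qh)=(eq)h=0$. The reverse implication is the main obstacle, since it requires controlling the possibly non-positive product $qh$ with no topology available. The key is that $eh=0$ forces $he=0$ by Lemma~\ref{lm:QALemma}, giving $h=dh$ and $h=hd$; hence $h$ commutes with $d$, and so with every $d^n$. Because $q\in CC(\{d^n\})$, this yields $q\,C\,h$, so that $qh=hq\in G$ by Lemma~\ref{lm:1/2Lemma} (i); moreover $d^nh=h$ for all $n$. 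I would then replace $h$ by a positive shift $h':=h+N\cdot 1\geq 0$ (still commuting with each $d^n$) to meet the positivity hypothesis and apply Lemma~\ref{lm:MonotoneSupInf} to $(d^n)$ with multiplier $h'$, obtaining $qh'=\inf_n(d^nh')$. Evaluating the two sides via $d^nh'=h+N\,d^n$ and $qh'=qh+Nq$ collapses the identity to $qh=h$, that is, $(1-q)h=0$. The delicate points to get right are exactly the commutation $q\,C\,h$, which is what makes $qh$ an element of $G$ and legitimizes the monotone lemma, and its dependence on passing from $eh=0$ to $he=0$ through quadratic annihilation.
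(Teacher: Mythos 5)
Your proof is correct, but in the two substantive steps it departs from the paper's argument. To show $q\in P$, the paper does not touch Lemma~\ref{lm:MonotoneSupInf} at this stage: it uses the square-root machinery, observing that $0\leq q\leq(1-e)^{2n}$ gives $q^{1/2}\leq(1-e)^n$ by Theorem~\ref{th:MonotonicitySR}~(ii), hence $q^{1/2}\leq q\leq q^{1/2}$ and $q=q^2$. Your double application of Lemma~\ref{lm:MonotoneSupInf} (first with multiplier $d$ to get $dq=q$, then with multiplier $q$ to get $q^2=\inf_n d^nq=q$) is a clean alternative that bypasses square roots entirely and has the bonus of producing $eq=0$ along the way, which you then exploit for the easy direction of the equivalence (the paper instead argues $q\leq 1-e$, so $e\leq 1-q$ and $e=e(1-q)$). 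For the forward implication $eh=0\Rightarrow(1-q)h=0$, the paper multiplies by $h^2$: since $h^2\geq 0$ and $h^2C(1-e)^n$, Lemma~\ref{lm:MonotoneSupInf} gives $h^2q=\inf_n h^2(1-e)^n=h^2$, whence $(1-q)h^2(1-q)=0$ and $(1-q)h=0$ by QA. Your route instead derives $qCh$ from $h\in C(\{d^n\})$ and $q\in CC(\{d^n\})$, and then uses the positive translate $h+N\cdot 1$ to meet the positivity hypothesis of the lemma; this works (translation and positive scalar multiplication preserve infima in the partially ordered real vector space $G$), though it costs you the extra shift bookkeeping that the paper's choice of $h^2$ as multiplier avoids, since $h^2$ is automatically positive. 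Both versions lean on QA at the outset to pass from $eh=0$ to $he=0$, so neither is strictly more economical in hypotheses; yours is more uniform in that Lemma~\ref{lm:MonotoneSupInf} does all the work, while the paper's distributes the load between the square-root theorem and QA.
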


\begin{proof}
As $1-e\in E$, Lemma \ref{lm:EProperties} (i) implies that
$((1-e)\sp{n}) \sb{n\in\Nat}$ is a descending sequence in $E$, and
it is obvious that the terms of this sequence commute pairwise;
therefore, by CV, it has an infimum $q$ in $G$ and $q\in
CC\{(1-e)\sp{n}: n\in\Nat\}\subseteq CC(e)$. Thus, $1-q\in CC(e)$.
Evidently, $0\leq q\leq 1-e\leq 1$, whence $0\leq q \sp{1/2}\leq 1$
by Theorem \ref{th:MonotonicitySR} (ii), i.e., $q\sp{1/2} \in E$,
and it follows from Lemma \ref{lm:EProperties} (i) that $q=
(q\sp{1/2})\sp{2}\leq q\sp{1/2}$. For every $n\in\Nat$, we have
$q\leq (1-e)\sp{2n}$, so by Theorem \ref{th:MonotonicitySR} (ii)
again, $q\sp{1/2} \leq(1-e)\sp{n}$, and it follows that
$q\sp{1/2}\leq q$. Therefore, $q\sp{1/2}=q$, so $q=q\sp{2}\in P$,
whence $1-q\in P \cap CC(e)$.

Suppose $h\in G$ and $eh=0$. Then $0\leq h\sp{2}$ and $hCe$,
therefore $h\sp{2}C(1-e)\sp{n}$ for all $n\in \Nat$. By Lemma
\ref{lm:MonotoneSupInf}, $h\sp{2}q=qh\sp{2}$ is the infimum in $G$
of the sequence $(h\sp{2}(1-e) \sp{n})\sb{n\in\Nat}$. But
$h\sp{2}(1-e)=h\sp{2}$, and by induction on $n$,
$h\sp{2}(1-e)\sp{n}=h\sp{2}$ for all $n\in\Nat$, so all terms in the
sequence $(h\sp{2}(1-e)\sp{n})\sb{n\in\Nat}$ are equal to $h\sp{2}$,
and it follows that $h\sp{2}q=h\sp{2}$.  Therefore,
$(1-q)h\sp{2}(1-q)=0$, so $(1-q)h=0$ by QA. Thus, $eh=0\Rightarrow
(1-q)h=0$.

Conversely, suppose that $(1-q)h=0$. As $q$ is the infimum in $G$ of
$((1-e)\sp{n})\sb{n\in\Nat}$, we have $q\leq 1-e$, so $e\leq 1-q\in
P$, and it follows that $e=e(1-q)$. Therefore $eh= e(1-q)h=0$, and
we have $eh=0\Leftrightarrow(1-q)h=0$.
\end{proof}

\begin{theorem}  \label{th:CarrierProperty}
For each $g\in G$ there is a uniquely determined projection $g\dg\in
P$ such that, for all $h\in G$, $gh=0\Leftrightarrow g\dg h=0$.
Moreover, $g\dg\in P\cap CC(g)$.
\end{theorem}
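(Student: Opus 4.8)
The plan is to reduce the general case to the positive case by associating to each $g\in G$ a canonical nonnegative element whose zero-annihilators coincide with those of $g$, and then apply Lemma~\ref{lm:(1-e)^n}. The natural candidate is $g\sp{2}$, since $0\leq g\sp{2}\in G$ by Definition~\ref{df:e-ring}~(vi) (together with Lemma~\ref{lm:1/2Lemma}~(iv)). First I would establish that, for every $h\in G$, one has $gh=0\Leftrightarrow g\sp{2}h=0$. The forward implication is immediate. For the converse, from $g\sp{2}h=0$ I would derive $hg\sp{2}=0$ by Lemma~\ref{lm:QALemma}, and then aim to show $gh=0$; the cleanest route is to observe $(gh)\sp{*}$-type symmetry is unavailable abstractly, so instead I would use QA directly. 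Specifically, from $g\sp{2}h=0$ I expect to get $h g\sp2 h = 0$, hence $(hg)(hg)$-related quadratic expressions vanish, letting QA force $gh=hg=0$.

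Once $g\sp{2}$ is in hand as a nonnegative element with the same annihilators as $g$, I would scale it into the unit interval. By Lemma~\ref{lm:lambdaLemma} there is $0<\lambda\in\reals$ with $e :=\lambda g\sp{2}\in E$, and clearly $eh=0\Leftrightarrow g\sp{2}h=0\Leftrightarrow gh=0$ for all $h\in G$. Now I apply Lemma~\ref{lm:(1-e)^n} to this effect $e$: it produces a projection $1-q\in P\cap CC(e)$ satisfying $eh=0\Leftrightarrow(1-q)h=0$ for all $h\in G$. Setting $g\dg :=1-q$ then gives a projection with $gh=0\Leftrightarrow g\dg h=0$, which is the required annihilation property.

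For the membership $g\dg\in CC(g)$, I would trace the commutant relations through the construction. Lemma~\ref{lm:(1-e)^n} gives $g\dg\in CC(e)=CC(\lambda g\sp{2})=CC(g\sp{2})$, so it remains to upgrade from $CC(g\sp{2})$ to $CC(g)$. For this, take any $k\in C(g)$; then $k$ commutes with $g\sp{2}$, hence with $e$, hence with every element of $CC(e)$, in particular with $g\dg$. This shows $C(g)\subseteq C(g\dg)$, i.e.\ $g\dg\in CC(g)$, as needed. Uniqueness is the final step: if $p,p'\in P$ both satisfy the annihilation property, I would apply the property to the element $h :=1-p'$. Since $p'(1-p')=0$ gives $g(1-p')=0$ via $p'=g\dg$ one direction, I would instead argue symmetrically that $p\leq p'$ and $p'\leq p$ by showing $g(1-p)=0$ forces $p'(1-p)=0$, whence $p'=p'p$ and $p'\leq p$ by Lemma~\ref{lm:EProperties}~(ii); the reverse inclusion is identical.

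The main obstacle I anticipate is the converse direction $g\sp{2}h=0\Rightarrow gh=0$ in the first paragraph, since abstractly we lack the adjoint identity $(gh)\sp{*}=hg$ that makes this trivial for operators. The key will be to manufacture a quadratic expression of the form $(gh)\,k\,(gh)=0$ or $g\,(hg)\sp{2}\,g$-type vanishing to which the QA hypothesis applies, converting one-sided annihilation of $g\sp{2}$ into two-sided annihilation of $g$. Getting the correct bracketing so that QA (which concerns expressions $gh\sp{2}g$) is genuinely applicable is the delicate point; everything after that is routine bookkeeping with the commutant and Lemma~\ref{lm:(1-e)^n}.
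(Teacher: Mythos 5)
Your proposal is correct and follows essentially the same route as the paper: pass to $e:=\lambda g\sp{2}\in E$, apply Lemma~\ref{lm:(1-e)^n}, and prove uniqueness by substituting $h:=1-p$. The step you flag as delicate is in fact immediate and is exactly what the paper does: from $g\sp{2}h=0$ one gets $hg\sp{2}h=0$ by left-multiplying by $h$, and this is literally of the form $ab\sp{2}a=0$ with $a=h$, $b=g$, so QA yields $gh=hg=0$ directly.
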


\begin{proof}
Let $g\in G$. As $0\leq g\sp{2}$, there exists $0<\lambda\in \reals$
such that $e :=\lambda g\sp{2}\in E$. By Lemma \ref{lm:(1-e)^n},
there is a projection $g\dg\in P\cap CC(e)=CC(g\sp{2})\subseteq
CC(g)$ such that, for all $h\in G$, $eh=0\Leftrightarrow g\dg h=0$.
By QA, for all $h\in G$, we have $gh=0\Rightarrow
g\sp{2}h=0\Rightarrow hg\sp{2}h=0\Rightarrow gh=0$, so
\[
gh=0\Leftrightarrow g\sp{2}h=0\Leftrightarrow\lambda g\sp{2}h=0
 \Leftrightarrow eh=0\Leftrightarrow g\dg h=0.
\]
To prove uniqueness, suppose $p\in P$ and $gh=0\Leftrightarrow ph=0$
for all $h\in G$. Then $g\dg h=0\Leftrightarrow ph=0$ for all $h\in
G$. Putting $h=1-p$, we find that $g\dg(1-p)=0$, i.e., $g\dg=g\dg
p$, so $g\dg\leq p$. By symmetry, $p\leq g\dg$, so $p=g\dg$.
\end{proof}

\begin{definition}  \label{df:CarrierProj}
If $g\in G$, the uniquely determined projection $g\dg$ in Theorem
\ref{th:CarrierProperty} is called the \emph{carrier projection} of
$g$.
\end{definition}

As $\GH$ is an AH-algebra, it follows that each Hermitian operator
$A\in\GH$ has a carrier projection $A\dg\in\PH\cap CC(A)$. In fact,
as is easily seen, $A\dg$ is just the projection onto the orthogonal
complement of the null space of $A$.

In view of Lemma \ref{lm:QALemma}, the carrier projection $g\dg\in
P$ of $g\in G$ is characterized not only by the ``right
annihilation" condition $gh=0\Leftrightarrow g\dg h=0\text{\ for
all\ }h\in G$, but also by the corresponding ``left annihilation"
condition $hg=0\Leftrightarrow hg\dg=0\text{\ for all\ }h\in G$.
Therefore, $G$ has the so-called \emph{carrier property}
\cite[Definition 3.3]{FoPuPD}, and the results of \cite[Section
3]{FoPuPD} are at our disposal.

\begin{lemma} \label{lm:CarrierLemma}
Let $g,h\in G$, $p\in P$, and $e\in E$. Then: {\rm (i)} $g\dg\leq
p\Leftrightarrow gp=pg=g$. {\rm (ii)} $g={g\dg}g=gg\dg$. {\rm (iii)}
$p\leq 1-g\dg \Leftrightarrow gp=pg=0$. {\rm (iv)} $e\dg$ is the
smallest projection $p\in P$ such that $e\leq p$. {\rm (v)}
$(g\dg)\dg= g\dg$. {\rm (vi)} $gh=0 \Leftrightarrow g\dg
h\dg=0\Leftrightarrow g\dg\leq1-h\dg$.
\end{lemma}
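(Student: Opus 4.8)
The plan is to prove the six equivalences of Lemma~\ref{lm:CarrierLemma} in sequence, using the annihilation characterization of the carrier projection from Theorem~\ref{th:CarrierProperty} together with the projection-ordering facts from Lemma~\ref{lm:EProperties}~(ii). For part (i), I would start from the defining property $gh=0\Leftrightarrow g\dg h=0$. If $g\dg\leq p$, then $p$ annihilates $g$ on the complement in the sense that $g\dg(1-p)=0$ by Lemma~\ref{lm:EProperties}~(ii), hence $g(1-p)=0$ by Theorem~\ref{th:CarrierProperty}, giving $gp=g$; since the carrier property gives the matching left-annihilation condition (noted in the text preceding the lemma), I get $pg=g$ as well. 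Conversely, if $gp=pg=g$, then $g(1-p)=0$, so $g\dg(1-p)=0$, which forces $g\dg=g\dg p$, i.e.\ $g\dg\leq p$ by Lemma~\ref{lm:EProperties}~(ii) again.

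Part (ii) is then immediate: apply (i) with $p=g\dg$, using $(g\dg)\dg=g\dg$—or more simply, observe $g\dg\leq g\dg$ trivially gives $g g\dg = g\dg g = g$. Part (iii) is the ``orthogonal'' counterpart of (i): $p\leq 1-g\dg$ is equivalent to $g\dg\leq 1-p$ (order-reversal of $x\mapsto 1-x$), which by (i) says $g(1-p)=(1-p)g=g$, i.e.\ $gp=pg=0$; I would just translate between these forms carefully. For part (iv), I would show $e\dg$ works and is least. That $e\leq e\dg$ follows from Lemma~\ref{lm:(1-e)^n} (where $1-q\in P$ and $e\leq 1-q$ was used, giving $e\leq e\dg$ after identifying the carrier); that it is smallest follows because if $e\leq p\in P$ then $e(1-p)\leq p(1-p)=0$ with $e(1-p)\geq 0$, forcing $e(1-p)=0$, hence $e\dg(1-p)=0$ and $e\dg\leq p$ by the same Lemma~\ref{lm:EProperties}~(ii) argument.

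Part (v) follows by uniqueness in Theorem~\ref{th:CarrierProperty}: for any $h$, $g\dg h=0\Leftrightarrow gh=0\Leftrightarrow g\dg h=0$ trivially shows $g\dg$ satisfies the carrier-defining condition \emph{for itself}, and since a projection is its own carrier (check $g\dg h=0\Leftrightarrow g\dg h=0$), uniqueness gives $(g\dg)\dg=g\dg$. Part (vi) is where I expect to consolidate everything: $gh=0\Leftrightarrow g\dg h=0$ is Theorem~\ref{th:CarrierProperty}; then $g\dg h=0$ should be rewritten using the left-annihilation form as $g\dg h\dg=0$ (since $h=h\dg h=h h\dg$ by (ii), annihilating $h$ and annihilating $h\dg$ coincide), and finally $g\dg h\dg=0$ between two projections is equivalent to $g\dg\leq 1-h\dg$ by part (iii) applied with the roles specialized to projections.

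The main obstacle, and the step demanding the most care, will be keeping the left/right annihilation symmetry straight throughout—particularly in (i) and (vi). The carrier property only directly gives the right-annihilation equivalence $gh=0\Leftrightarrow g\dg h=0$; converting to $hg=0\Leftrightarrow hg\dg=0$ relies on Lemma~\ref{lm:QALemma} as invoked in the remark preceding the lemma. In (vi) I must apply this twice—once to pass from $gh$ to $g\dg h$, and once to replace $h$ by $h\dg$—so I would make explicit at the start that the carrier is two-sided, then each conversion becomes a one-line appeal rather than a repeated QA argument.
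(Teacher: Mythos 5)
Your argument for part (vi) is essentially the paper's own proof: pass from $gh=0$ to $g\dg h=0$ by the carrier property, then to $g\dg h\dg=0$ by the left-annihilation form of the carrier condition (justified via Lemma~\ref{lm:QALemma}), then to $g\dg\leq 1-h\dg$ because both factors are projections. The only real difference is in parts (i)--(v): the paper simply cites an external reference for these, whereas you supply direct proofs from the carrier characterization together with Lemma~\ref{lm:EProperties}~(ii), and those proofs are correct --- (i) by converting $g\dg\leq p$ to $g\dg(1-p)=(1-p)g\dg=0$ and back, (ii) and (iii) as immediate specializations, (iv) from $q\leq 1-e$ in Lemma~\ref{lm:(1-e)^n}, and (v) by uniqueness. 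This buys self-containedness at the cost of a little length. One small stylistic point in (iv): the step ``$e\leq p$ implies $e(1-p)\leq p(1-p)=0$'' is roundabout, since order-preservation under multiplication by $1-p$ already presupposes that $e$ and $p$ commute; it is cleaner to invoke Lemma~\ref{lm:EProperties}~(ii) directly to get $e=ep$, hence $e(1-p)=0$, and then proceed as you do. This does not affect correctness.
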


\begin{proof}
(i)--(v) follow from \cite[Lemma 3.4]{FoPuPD}. To prove (vi), we
observe that $gh=0\Leftrightarrow g\dg h=0$ and by the ``left
annihilation" condition $g\dg h=0\Leftrightarrow g\dg h\dg=0$.
Moreover, as both $g\dg$ and $h\dg$ are projections, $g\dg
h\dg=0\Leftrightarrow g\dg\leq1-h\dg$.
\end{proof}

\begin{theorem} \label{th:SigmaOML}
$P$ is a $\sigma$-complete orthomodular lattice {\rm (}OML{\rm )}.
Moreover, if $G$ has the complete CV property, then $P$ is a
complete OML.
\end{theorem}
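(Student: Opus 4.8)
The goal is to show $P$ is a $\sigma$-complete OML, and a complete OML under the stronger hypothesis. We already know $P$ is an OMP (orthomodular poset) from the earlier citation, and Corollary~\ref{co:Psup/infClosed} tells us that suprema/infima of projection families that exist in $G$ are automatically in $P$ and serve as the lattice operations in $P$. So the entire task reduces to \emph{producing} suprema in $G$ for the relevant families of projections.

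The plan is to first establish that $P$ is a lattice, i.e.\ that any two projections $p,q\in P$ have a supremum $p\vee q$ in $G$ (equivalently, by duality, an infimum). For this I would form an ascending, pairwise-commuting, bounded sequence in $G$ built from $p$ and $q$ and invoke CV. The natural candidate is to consider the descending sequence $((1-p)(1-q))\sp{n}$ or a symmetrized version—but since $p$ and $q$ need not commute, I would instead work with a genuinely commuting sequence. A clean approach: set $e := \st(p+q)\in E$ (using $\st\in E$), and apply Lemma~\ref{lm:(1-e)^n} to obtain the carrier projection $(1-e)\dg$ or, via the carrier-projection machinery, produce $p\vee q$ as a carrier. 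Concretely, I expect $p\vee q = 1-((1-p)\wedge(1-q))$ will emerge from applying Lemma~\ref{lm:(1-e)^n} to $\st(p+q)$: the sequence $((1-\st(p+q))\sp{n})\sb{n}$ is descending, pairwise commuting, bounded, so by CV its infimum exists in $G$, and by Corollary~\ref{co:Psup/infClosed} that infimum is a projection; tracking the annihilation conditions via Lemma~\ref{lm:CarrierLemma} (vi) identifies it as the meet of $1-p$ and $1-q$, whence the join $p\vee q$ exists.

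Once binary joins exist, I would upgrade to countable joins for $\sigma$-completeness. Given a sequence $(p\sb{n})\sb{n\in\Nat}$ in $P$, I would form the partial joins $q\sb{n} := p\sb{1}\vee p\sb{2}\vee\cdots\vee p\sb{n}$, which exist by the lattice property just proved. This yields an ascending sequence $q\sb{1}\leq q\sb{2}\leq\cdots$ in $P\subseteq G$, bounded above by $1$, and—crucially—its terms commute pairwise, because each $q\sb{n}$ is a join of the first $n$ projections and (by the Vigier conclusion $g\in CC(\{g\sb{k}\})$ inherited at each finite stage) the successive partial joins double-commute with one another. Applying CV to $(q\sb{n})\sb{n}$ gives a supremum $q$ in $G$, which lies in $P$ by Corollary~\ref{co:Psup/infClosed}, and this $q$ is then the supremum of the original $(p\sb{n})$ in $P$. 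By duality the countable infima exist as well, establishing $\sigma$-completeness. For the final clause, assuming complete CV, I would replace the ascending C-net of finite partial joins of an arbitrary family $Q\subseteq P$ (indexed by finite subsets, directed by inclusion) and apply complete CV to obtain $\bigvee Q$ in $G$, hence in $P$ by Corollary~\ref{co:Psup/infClosed}.

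The main obstacle is the commutativity bookkeeping required before each application of CV (or complete CV): these Vigier properties demand that the sequence (or net) be \emph{pairwise commuting}, so I must verify that the partial joins $q\sb{n}$—and in the complete case the net of finite joins—commute with one another. Binary joins of non-commuting projections are themselves well-defined, but ensuring the \emph{chain} of partial joins is a genuine C-net is the delicate point; I expect to lean on the $CC(\{\cdot\})$ conclusion of the Vigier property at each finite stage, together with Lemma~\ref{lm:CarrierLemma}, so that $q\sb{m}\leq q\sb{n}$ forces $q\sb{m}Cq\sb{n}$. Verifying this commutation carefully, rather than the existence of the suprema themselves, is where the real work lies.
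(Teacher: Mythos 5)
Your second and third paragraphs reproduce the paper's argument for the $\sigma$-completeness and completeness clauses almost exactly: reduce to an ascending sequence (respectively, net) of partial joins, apply CV (respectively, complete CV) to get a supremum in $G$, and land back in $P$ via Corollary~\ref{co:Psup/infClosed}. The one place you diverge there is at what you call ``the delicate point,'' the pairwise commutativity of the chain of partial joins. This is not delicate: by Lemma~\ref{lm:EProperties}~(ii), comparable projections commute ($q\sb{m}\leq q\sb{n}$ gives $q\sb{m}=q\sb{m}q\sb{n}=q\sb{n}q\sb{m}$), so any ascending sequence or net in $P$ is automatically pairwise commuting, hence a C-net. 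No appeal to the $CC(\{\cdot\})$ conclusions of the Vigier property or to Lemma~\ref{lm:CarrierLemma} is needed, and this one-line observation is exactly what the paper's proof uses.

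Where you genuinely depart from the paper is your first paragraph. The paper does not prove that $P$ is a lattice at all; it imports the OML property from \cite[Theorem 3.5]{FoPuPD} and proves only the completeness clauses in-house. Your attempt to manufacture binary joins as the carrier projection of $\st(p+q)$ is a reasonable self-contained alternative, but as written it has two gaps. First, Corollary~\ref{co:Psup/infClosed} cannot be applied to the sequence $((1-\st(p+q))\sp{n})\sb{n}$, since its terms are not projections; it is Lemma~\ref{lm:(1-e)^n} itself (via the square-root argument) that shows the relevant infimum has projection complement. Second, identifying $(\st(p+q))\dg$ with $p\vee q$ requires proving $(p+q)h=0\Leftrightarrow ph=qh=0$ for all $h\in G$; the forward direction would naturally pass through $hph+hqh=0$ and then need $0\leq hph$ and $0\leq hqh$ for \emph{arbitrary} $h\in G$ before QA can be applied to each summand separately, and that positivity fact is not established anywhere in the paper (the axioms give $aba\in E\sp{+}$ only for $a,b\in E\sp{+}$, and Lemma~\ref{lm:EProperties}~(iii) only handles $pgp$ with $p\in P$ outside). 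If you instead accept the OML property by citation, as the paper does, these gaps disappear and your proof collapses to the paper's.
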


\begin{proof}
That $P$ is an OML follows from \cite[Theorem 3.5]{FoPuPD}. Let
$p\sb{1}\leq p\sb{2}\leq\cdots$ be an ascending sequence in $P$. To
prove that $P$ is $\sigma$-complete, it will be sufficient to show
that $(p\sb{n})\sb{n\in\Nat}$ has a supremum in $P$. By Lemma
\ref{lm:EProperties} (ii), the projections in the sequence
$(p\sb{n})\sb{n\in\Nat}$ commute pairwise, whence by CV,
$(p\sb{n})\sb{n\in\Nat}$ has a  supremum $p$ in $G$. By Corollary
\ref{co:Psup/infClosed}, $p\in P$ and $p$ is the supremum of
$(p\sb{n})\sb{n\in\Nat}$ in $P$.

Suppose $G$ has the complete CV-property, let $Q\subseteq P$, let
${\cal F}$ be the directed set under inclusion of all finite subsets
$F$ of $Q$, and for $F\in{\cal F}$, let $q\sb{F}$ be the supremum in
$P$ of $F$.  Then $((q\sb{F})\sb{F\in{\cal F}})$ is an ascending
C-net in $G$ bounded above by $1$, and (arguing as above), one shows
that its supremum in $G$ belongs to $P$ and is the supremum of $Q$
in $P$.
\end{proof}

\begin{definition} \label{df:CarA}
If $A\subseteq G$, then $p\in P$ is a \emph{carrier projection} for
$A$ iff, for all $h\in G$, the condition $ah=0$ for all $a\in A$ is
equivalent to the condition $ph=0$. Clearly, if $A$ has a carrier
projection $p$, then it is unique, and we shall denote it by $A\dg
:=p$.
\end{definition}

We omit the straightforward proof of the following.

\begin{theorem}
Let $A\subseteq G$. Then $A\dg$ exists iff $\{a\dg: a\in A\}$ has a
supremum $p$ in $P$, in which case $A\dg=p$. Therefore, $P$ is a
complete OML iff every subset $A\subseteq G$ has a carrier
projection $A\dg$.
\end{theorem}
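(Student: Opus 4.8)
The plan is to reduce the whole statement to a fact about subsets of $P$ and then to recognize the sought carrier projection as a least upper bound. The bridge is Theorem~\ref{th:CarrierProperty}: for all $a,h\in G$ we have $ah=0\Leftrightarrow a\dg h=0$, so the defining condition of Definition~\ref{df:CarA}, namely ``$ah=0$ for all $a\in A$'', is equivalent, for each fixed $h$, to ``$qh=0$ for all $q\in Q$'', where $Q:=\{a\dg:a\in A\}\subseteq P$. Hence $A$ and $Q$ have exactly the same carrier projections, and it suffices to characterize the carrier projection of a set $Q$ of projections.

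Next I would convert the annihilation conditions into order conditions. For $q\in P$ we have $q\dg=q$ by Lemma~\ref{lm:CarrierLemma}(iv), so Lemma~\ref{lm:CarrierLemma}(vi) gives $qh=0\Leftrightarrow q\leq 1-h\dg$; the same computation applied to a candidate projection $p$ gives $ph=0\Leftrightarrow p\leq 1-h\dg$. Thus ``$qh=0$ for all $q\in Q$'' says precisely that $1-h\dg$ is an upper bound of $Q$ in $P$. If $p=\sup Q$ exists in $P$, then $1-h\dg$ is an upper bound of $Q$ iff $p\leq 1-h\dg$, i.e. iff $ph=0$; comparing the two equivalences shows that $p$ is the carrier projection of $Q$, hence of $A$, so $A\dg$ exists and equals $p$. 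This disposes of the ``if'' half of the first statement.

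For the converse I would manufacture test elements. Assume $p:=A\dg$ exists. Feeding $h:=1-p$ into the carrier equivalence --- whose right-hand side holds because $p(1-p)=0$ --- yields $q(1-p)=0$, i.e. $q=qp$, i.e. $q\leq p$, for every $q\in Q$, so $p$ is an upper bound of $Q$. If $r\in P$ is any upper bound of $Q$, then $q(1-r)=0$ for all $q\in Q$, so feeding $h:=1-r$ into the carrier equivalence gives $p(1-r)=0$, whence $p\leq r$ by Lemma~\ref{lm:EProperties}(ii). Therefore $p=\sup Q$ in $P$. The one point needing care is exactly this choice of the test elements $1-p$ and $1-r$ together with the bookkeeping of the chain of equivalences; I expect no deeper obstacle, consistent with the statement being flagged as routine.

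Finally, the second assertion follows formally. Since $P$ is already an OML by Theorem~\ref{th:SigmaOML}, it is a complete OML exactly when every subset of $P$ has a supremum in $P$. If $P$ is complete, then for any $A\subseteq G$ the subset $\{a\dg:a\in A\}\subseteq P$ has a supremum in $P$, so $A\dg$ exists by the first statement. Conversely, if every subset of $G$ has a carrier projection, then in particular each $Q\subseteq P$ does; applying the first statement to $A:=Q$ and using $q\dg=q$ (so that $\{a\dg:a\in Q\}=Q$) shows that $\sup Q$ exists in $P$. Hence every subset of $P$ has a supremum, and $P$ is a complete OML.
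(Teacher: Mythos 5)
Your proof is correct, and since the paper explicitly omits its own proof of this theorem as ``straightforward,'' there is nothing to compare against; your reduction to $Q=\{a\dg : a\in A\}$ via Theorem \ref{th:CarrierProperty}, followed by the order-theoretic reformulation $qh=0\Leftrightarrow q\leq 1-h\dg$ from Lemma \ref{lm:CarrierLemma} and the test elements $h=1-p$, $h=1-r$, is evidently the intended argument. All the cited lemmas apply as you use them (in particular $q\dg=q$ for $q\in P$ by Lemma \ref{lm:CarrierLemma}(iv)), and the deduction of the completeness statement is a formal consequence of the first part.
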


If $p,q\in P$, we denote the supremum and infimum of $p$ and $q$ in
$P$ by $p\vee q$ and $p\wedge q$, respectively.

\begin{lemma} \label{lm:p-q,p+q}
Let $p,q\in P$. Then: {\rm (i)} $p\leq q\Leftrightarrow q-p\in P$.
{\rm (ii)} If $p\leq q$, then $q-p=q\wedge(1-p)$. {\rm (iii)} If
$p+q\in P$, then $pq=qp=0$ and $p+q=p\vee q$. {\rm (iv)} If $pCq$,
then $p\vee q=p+q-pq$, $p\wedge q=pq$, and $p+q= p\vee q+p\wedge q$.
\end{lemma}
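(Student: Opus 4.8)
The plan is to treat the four parts in order, leaning throughout on Lemma \ref{lm:EProperties}, which converts order relations among projections and effects into multiplicative identities, together with the quadratic annihilation property in the form of Lemma \ref{lm:QALemma}. For (i), the backward direction is immediate, since $q-p\in P\subseteq E\sp{+}$ forces $0\leq q-p$, i.e.\ $p\leq q$. For the forward direction I would first invoke Lemma \ref{lm:EProperties} (ii) to record that $p\leq q$ with $q\in P$ gives $p=pq=qp$; a direct expansion then yields $(q-p)^{2}=q^{2}-qp-pq+p^{2}=q-p$, so that $q-p$ is an idempotent in $G$, hence a member of $P$.

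For (ii), part (i) already supplies $q-p\in P$, and the bounds $q-p\leq q$ and $q-p\leq 1-p$ are clear from $0\leq p$ and $q\leq 1$. The crux is minimality: given $r\in P$ with $r\leq q$ and $r\leq 1-p$, Lemma \ref{lm:EProperties} (ii) yields $r=rq$ and $rp=0$, so $r(q-p)=rq-rp=r$, whence $r\leq q-p$ by Lemma \ref{lm:EProperties} (ii) again; thus $q-p=q\wedge(1-p)$.

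For (iii) the naive route, expanding $(p+q)^{2}=p+q$, produces only $pq+qp=0$, and splitting this into $pq=qp=0$ is the one place a little care is needed. I would sidestep the algebra entirely by observing that $p\leq p+q$ (since $q\geq 0$) and applying Lemma \ref{lm:EProperties} (ii) to the projection $p+q$, obtaining $p=p(p+q)=p+pq$, hence $pq=0$; then $qp=0$ follows at once from Lemma \ref{lm:QALemma}. That $p+q=p\vee q$ then drops out: $p,q\leq p+q$, and if $r\in P$ dominates both $p$ and $q$ then $(p+q)r=pr+qr=p+q$, so $p+q\leq r$ by Lemma \ref{lm:EProperties} (ii).

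For (iv), with $pCq$ the element $pq$ is an idempotent effect (Lemma \ref{lm:EProperties} (i) gives $0\leq pq\leq p,q$, and commutativity gives $(pq)^{2}=p^{2}q^{2}=pq$), so $pq\in P$; the argument of (ii)--(iii) then shows $pq=p\wedge q$, since any common lower bound $r$ satisfies $r(pq)=(rp)q=rq=r$. To handle the join I would pass to complements: writing $s:=p+q-pq$, the identity $1-s=(1-p)(1-q)$ exhibits $1-s$ as a product of the commuting projections $1-p,1-q$, so $1-s=(1-p)\wedge(1-q)\in P$ and hence $s\in P$; that $p,q\leq s$ follows from $s-p=q(1-p)\geq 0$, and minimality follows by De Morgan, since any common upper bound $r$ of $p,q$ satisfies $1-r\leq(1-p)\wedge(1-q)=1-s$, giving $s\leq r$. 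The final identity $p+q=p\vee q+p\wedge q$ is then the triviality $(p+q-pq)+pq=p+q$. I expect the only genuine obstacle to be the splitting $pq+qp=0\Rightarrow pq=qp=0$ in (iii), which the order-theoretic detour with Lemma \ref{lm:QALemma} resolves cleanly; everything else is bookkeeping with Lemma \ref{lm:EProperties} and the complementation symmetry of $P$.
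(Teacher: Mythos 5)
Your proof is correct. Note, though, that the paper itself does not prove this lemma at all: it simply cites \cite[Theorems 2.9, 2.11, 2.12 and Corollaries 2.13, 2.14]{FRwE}, so the comparison here is between your self-contained argument and an external reference rather than between two arguments in the paper. What you supply is a complete elementary derivation from material actually present in the paper: Lemma \ref{lm:EProperties} (ii) to convert $p\leq q$ into $pq=qp=p$, the idempotency computation $(q-p)^2=q-p$ for (i), the order-theoretic verification of the meet in (ii), and --- the one genuinely delicate point --- the avoidance of the trap in (iii), where $(p+q)^2=p+q$ only yields $pq+qp=0$. Your detour via $p\leq p+q$ and Lemma \ref{lm:EProperties} (ii), followed by Lemma \ref{lm:QALemma} to get $qp=0$ from $pq=0$, handles this cleanly (it does make (iii) depend on the QA hypothesis, which is available under Standing Assumption \ref{as:AH-alg} but is worth flagging, since the cited results in \cite{FRwE} are proved for general e-rings). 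The De Morgan passage from the meet $pq=p\wedge q$ to the join $p\vee q=p+q-pq$ via $1-s=(1-p)(1-q)$ in (iv) is standard and correct. The net effect of your approach is that the lemma becomes independent of the external reference, at the modest cost of invoking QA in part (iii).
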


\begin{proof}
For (i) and (ii), see \cite[Theorem 2.9 and Corollary 2.14]{FRwE}.
For (iii), see \cite[Theorem 2.11 and Corollary 2.13]{FRwE}. For
(iv), see \cite[Theorem 2.12 and Corollary 2.13]{FRwE}.
\end{proof}

\begin{definition} \label{df:AbsVal}
Let $g\in G$. As $0\leq g\sp{2}$, we can and do define $|g|
:=(g\sp{2})\sp{1/2}$. Also, we define $g\sp{+}= \st(|g|+g)$ and
$g\sp{-}=\st(|g|-g)$.
\end{definition}

\begin{lemma} \label{lm:Prop g+g-}
Let $g\in G$ and let $p :=(g\sp{+})\dg$.  Then:

\smallskip

\begin{tabular}{ll}
\ \ \ {\rm (i)}\ $|g|\sp{2}=g\sp{2}$. &
\ \ \ \ \ \ \ {\rm (ii)}\ $|g|,\, g\sp{+},\, g\sp{-}\in CC(g)$.\\
\ \,{\rm (iii)}\ $g=g\sp{+}-g\sp{-}$. &
\ \ \ \ \ \ \,{\rm (iv)}\ $0\leq |g|=g\sp{+}+g\sp{-}$.\\
\ \ {\rm (v)}\ $g\sp{+}g\sp{-}=g\sp{-}g\sp{+}=0$. &
\ \ \ \ \ \ \,{\rm (vi)}\ $|-g|=|g|$.\\
\ {\rm (vii)}\ $g\sp{-}=(-g)\sp{+}$. &
\ \ \ \ \ {\rm (viii)}\ $g\sp{+}=(-g)\sp{-}$.\\
\ \,{\rm (ix)}\ $p\in CC(g)$ &
\ \ \ \ \ \ \ {\rm (x)} $pC|g|$\\
\ \,{\rm (xi)}\ $pg=g\sp{+}$. &
\ \ \ \ \ \ {\rm (xii)}\ $(1-p)g=-g\sp{-}$.\\
{\rm (xiii)}\ $0\leq p|g|=g\sp{+}$. & \ \ \ \ \ \,{\rm (xiv)}\
$0\leq(1-p)|g|=g\sp{-}$.
\end{tabular}
\end{lemma}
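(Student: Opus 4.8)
The plan is to establish the fourteen parts in an order that lets each build on its predecessors, starting from the algebraic identities and then bringing in the carrier-projection machinery. Writing $|g| := (g\sp{2})\sp{1/2}$, part (i) is immediate from the definition of square root, since $(|g|)\sp{2} = ((g\sp{2})\sp{1/2})\sp{2} = g\sp{2}$. For (ii), I recall that $g\sp{1/2} \in CC(g)$ for $0 \leq g$ (Theorem \ref{th:SqRoot}) applied to $g\sp{2}$, together with the obvious facts $g, g\sp{2} \in CC(g)$ and that $CC(g)$ is closed under the real-linear operations and under commuting products; thus $|g| = (g\sp{2})\sp{1/2} \in CC(g\sp{2}) \subseteq CC(g)$, and then $g\sp{+} = \st(|g| + g)$ and $g\sp{-} = \st(|g| - g)$ lie in $CC(g)$ as well. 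Parts (iii) and (iv) are purely formal: $g\sp{+} - g\sp{-} = \st(|g|+g) - \st(|g|-g) = g$ and $g\sp{+} + g\sp{-} = |g|$, with $0 \leq |g|$ since $|g|$ is a square root, hence nonnegative by construction.

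Part (v) is the first place where QA enters. I would compute $g\sp{+}g\sp{-} = \st(|g|+g)\cdot\st(|g|-g) = \tfrac14(|g|\sp{2} - g\sp{2} + g|g| - |g|g)$; since $|g| \in CC(g)$ we have $g|g| = |g|g$, and by (i) $|g|\sp{2} = g\sp{2}$, so the product collapses to $0$. Then $g\sp{-}g\sp{+} = 0$ follows by the same cancellation (or by Lemma \ref{lm:QALemma}). Parts (vi), (vii), (viii) are symmetry statements: $|-g| = ((-g)\sp{2})\sp{1/2} = (g\sp{2})\sp{1/2} = |g|$ gives (vi), and then $(-g)\sp{+} = \st(|-g| + (-g)) = \st(|g| - g) = g\sp{-}$ gives (vii), with (viii) entirely analogous.

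The genuinely substantive part is the block (ix)--(xiv) involving the carrier projection $p = (g\sp{+})\dg$. For (ix), Theorem \ref{th:CarrierProperty} gives $p \in P \cap CC(g\sp{+})$, and since $g\sp{+} \in CC(g)$ by (ii) I expect $CC(g\sp{+}) \subseteq CC(g)$, yielding $p \in CC(g)$; this inclusion of bicommutants is the kind of step I would want to verify carefully rather than assume. Part (x) then follows because $|g| \in CC(g) = C(CC(g))$'s... more precisely, since $p \in CC(g)$ and $|g| \in CC(g)$, I would argue $pC|g|$ via the fact that elements of $CC(g)$ commute with one another (as $|g| \in CC(g) \subseteq C(CC(g))$ using that $CC(g)$ is an abelian family containing $g$). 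The crux is (xi): I would use (v), namely $g\sp{+}g\sp{-} = 0$, which by the carrier property (Theorem \ref{th:CarrierProperty}, the characterizing equivalence $g\sp{+}h = 0 \Leftrightarrow ph = 0$) forces $pg\sp{-} = 0$; combined with $pg\sp{+} = g\sp{+}$ from Lemma \ref{lm:CarrierLemma} (ii), I get $pg = p(g\sp{+} - g\sp{-}) = g\sp{+} - 0 = g\sp{+}$. Then (xii) is $(1-p)g = g - pg = g - g\sp{+} = -g\sp{-}$ by (iii). Finally (xiii) and (xiv) combine these with (x): since $pC|g|$ and both are nonnegative effects-up-to-scaling, $p|g| \geq 0$, and using $|g| = g\sp{+} + g\sp{-}$ together with $pg\sp{+} = g\sp{+}$ and $pg\sp{-} = 0$ gives $p|g| = g\sp{+}$, while $(1-p)|g| = g\sp{-} \geq 0$.

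I expect the main obstacle to be the careful handling of the bicommutant inclusions in (ix) and (x) and the precise invocation of the carrier-projection equivalence to derive $pg\sp{-} = 0$ in (xi); everything before that is routine algebra driven by $|g|\sp{2} = g\sp{2}$ and the commutativity $|g| \in CC(g)$, while everything after (xi) is bookkeeping on $g = g\sp{+} - g\sp{-}$ and $|g| = g\sp{+} + g\sp{-}$. A subtlety worth watching is that the carrier property is stated as a right-annihilation equivalence, so to pass from $g\sp{+}g\sp{-} = 0$ to $pg\sp{-} = 0$ I must read the equivalence in the correct direction with $h = g\sp{-}$, which is exactly the form Theorem \ref{th:CarrierProperty} provides.
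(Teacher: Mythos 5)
Your proposal is correct and follows essentially the same route as the paper: (i)--(viii) by direct algebra from $|g|\sp{2}=g\sp{2}$ and $|g|\in CC(g)$, then $p\in CC(g\sp{+})\subseteq CC(g)$ for (ix)--(x), the carrier equivalence applied to $g\sp{+}g\sp{-}=0$ to get $pg\sp{-}=0$ for (xi)--(xii), and commutativity of nonnegative elements (Definition \ref{df:e-ring} (iii)) for (xiii)--(xiv). The only cosmetic quibble is that your computation of $g\sp{+}g\sp{-}=0$ in (v) uses only commutativity and (i), not QA, so QA does not actually ``enter'' there.
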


\begin{proof}
(i)--(viii) are obvious. By Theorem \ref{th:CarrierProperty} and
(ii), we have $p\in CC(g\sp{+})\subseteq CC(g)$, proving (ix), and
(x) follows from (ix) and (ii). We have $pg\sp{+}=g\sp{+}$, and
since $g\sp{+} g\sp{-}=0$, we also have $pg\sp{-}=0$; hence (xi) and
(xii) follow from $g=g\sp{+}-g\sp{-}$. Likewise, $p|g|=g\sp{+}$ and
$(1-p)|g|=g\sp{-}$ follow from $|g|=g\sp{+}+g\sp{-}$. Since $0\leq
|g|,\,p,\,1-p$, Definition \ref{df:e-ring} (iii) implies that $0\leq
p|g|=g\sp{+}$ and $0\leq(1-p)|g| =g\sp{-}$, proving (xiii) and
(xiv).
\end{proof}

\begin{corollary} \label{co:Characterize g+g-}
If $g\in G$, then $g\sp{+}$ and $g\sp{-}$ are characterized by the
properties $g=g\sp{+}-g\sp{-}$, $g\sp{+}g\sp{-}=0$, and $0\leq
g\sp{+}+ g\sp{-}$.
\end{corollary}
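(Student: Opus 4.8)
The plan is to establish both halves of the word ``characterized'': first that $g\sp{+}$ and $g\sp{-}$ themselves satisfy the three listed properties, and then that they are the \emph{only} pair in $G$ that does. The first half is immediate from Lemma \ref{lm:Prop g+g-}: part (iii) gives $g=g\sp{+}-g\sp{-}$, part (v) gives $g\sp{+}g\sp{-}=0$, and part (iv) gives $0\leq|g|=g\sp{+}+g\sp{-}$. So essentially all the work lies in the uniqueness claim.

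For uniqueness, I would suppose $a,b\in G$ satisfy $g=a-b$, $ab=0$, and $0\leq a+b$, and aim to show $a=g\sp{+}$ and $b=g\sp{-}$. The first step is to upgrade the one-sided annihilation $ab=0$ to two-sided annihilation: by Lemma \ref{lm:QALemma}, $ab=0$ forces $ba=0$ as well, so both cross terms vanish. This innocuous-looking step is really the crux, since without it the cross terms in the square computation below would not cancel.

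The key computation is then to compare the squares of $a-b$ and $a+b$. Setting $c:=a+b$, ring arithmetic in $R$ together with $ab=ba=0$ gives
\[
c\sp{2}=(a+b)\sp{2}=a\sp{2}+b\sp{2}=(a-b)\sp{2}=g\sp{2},
\]
where each expression lies in $G$ by Lemma \ref{lm:1/2Lemma} (iv). Since $c\geq 0$ and $c\sp{2}=g\sp{2}$, the uniqueness clause of Theorem \ref{th:SqRoot} identifies $c$ with the positive square root of $g\sp{2}$; that is, $a+b=(g\sp{2})\sp{1/2}=|g|$ by Definition \ref{df:AbsVal}.

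Finally, combining $a+b=|g|$ with $a-b=g$ and using $\st\in E$, adding the two relations yields $2a=|g|+g$, hence $a=\st(|g|+g)=g\sp{+}$, and subtracting yields $2b=|g|-g$, hence $b=\st(|g|-g)=g\sp{-}$, both by Definition \ref{df:AbsVal}. This completes the identification. I expect the main (indeed only) obstacle to be the passage from $ab=0$ to $ba=0$: it is precisely what makes the cross terms cancel so that the two squares agree, and it is exactly what the QA property, via Lemma \ref{lm:QALemma}, supplies; the remainder is direct substitution and one appeal to square-root uniqueness.
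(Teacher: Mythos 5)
Your proof is correct and is essentially the paper's own argument: the paper likewise passes from $ab=0$ to $ab=ba=0$ (via Lemma \ref{lm:QALemma}), computes $(a+b)^2=a^2+b^2=g^2$, invokes uniqueness of the positive square root to get $a+b=|g|$, and then solves the linear system for $a$ and $b$. Your only additions are making the appeal to QA explicit and spelling out the (immediate) existence half, both of which the paper leaves tacit.
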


\begin{proof}
Suppose $a,b\in G$, $g=a-b$, $ab=0$, and $0\leq a+b$. Then
$ab=ba=0$, whence $g\sp{2}=a\sp{2}+b\sp{2}=(a+b)\sp{2}$, and as
$0\leq a+b$, it follows that $a+b=(g\sp{2})\sp{1/2}=|g|$. Therefore,
$g\sp{+}= \st(|g|+g)=\st(a+b+a-b)=a$ and
$g\sp{-}=\st(|g|-g)=\st(a+b-a+b)=b$.
\end{proof}

\section{The Comparability and Polar Decomposition Properties} 

We maintain Standing Assumption \ref{as:AH-alg}.

\begin{definition} \label{df:Comparability}
Define $P\sp{\pm}(g):=\{p\in P\cap C(g)\cap CPC(g): (1-p)g\leq 0
\leq pg\}$. We say that $G$ has the \emph{comparability property}
\cite[Definition 2.7] {FoPuPD} iff $P\sp{\pm}(g)\not=\emptyset$ for
all $g\in G$.\footnote{In \cite[Definition 3.4]{FGCOMP} the
comparability property was called \emph{general comparability}
because, for interpolation groups, it is equivalent to the property
of the same name \cite[Chapter 8]{Good}.}
\end{definition}

\begin{theorem} \label{th:Comparability}
If $g\in G$, then $(g\sp{+})\dg\in P\sp{\pm}(g)$, hence $G$ has the
comparability property.
\end{theorem}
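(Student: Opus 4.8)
The plan is to take $p := g\sp{+}\dg$, the carrier projection of $g\sp{+}$, and to verify the four membership conditions packaged into the definition of $P\sp{\pm}(g)$ (Definition \ref{df:Comparability}): namely $p\in P$, $p\in C(g)$, $p\in CPC(g)$, and $(1-p)g\leq 0\leq pg$. Most of these fall out directly from Lemma \ref{lm:Prop g+g-}, whose parts were evidently assembled with exactly this theorem in mind; the one genuinely substantive point is the bicommutant-type condition $p\in CPC(g)$.

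First, $p\in P$ holds by the very definition of a carrier projection. The order conditions come straight from Lemma \ref{lm:Prop g+g-}: part (xi) gives $pg=g\sp{+}$, which is $\geq 0$ by (xiii), while part (xii) gives $(1-p)g=-g\sp{-}$, which is $\leq 0$ because $g\sp{-}\geq 0$ by (xiv); hence $(1-p)g\leq 0\leq pg$. For $p\in C(g)$, I would invoke Lemma \ref{lm:Prop g+g-}(ix), which asserts $p\in CC(g)$; since $g$ trivially lies in $C(g)$, every element of $CC(g)=C(C(g))$ commutes with $g$, so $p\in C(g)$. (Alternatively, $p$ commutes with $g\sp{+}$ via the carrier identity $g\sp{+}=pg\sp{+}=g\sp{+}p$ of Lemma \ref{lm:CarrierLemma}(ii) and with $|g|$ by Lemma \ref{lm:Prop g+g-}(x), and $g=2g\sp{+}-|g|$.)

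The heart of the proof is $p\in CPC(g)=C(P\cap C(g))$, i.e.\ that $p$ commutes with every projection $r$ that commutes with $g$. I would fix such an $r\in P\cap C(g)$. Since $rg=gr$, one checks $rg\sp{2}=g\sp{2}r$, so $r\in C(g\sp{2})$; because $|g|=(g\sp{2})\sp{1/2}\in CC(g\sp{2})$ by Theorem \ref{th:SqRoot}, it follows that $r$ commutes with $|g|$. Then $r$ commutes with $g\sp{+}=\st(|g|+g)$, i.e.\ $r\in C(g\sp{+})$. Finally, $p=g\sp{+}\dg\in CC(g\sp{+})$ by Theorem \ref{th:CarrierProperty}, so $p$ commutes with every element of $C(g\sp{+})$, in particular with $r$. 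As $r$ was an arbitrary projection commuting with $g$, this yields $p\in CPC(g)$.

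Assembling these four facts shows $p=g\sp{+}\dg\in P\sp{\pm}(g)$, so $P\sp{\pm}(g)\neq\emptyset$; since $g\in G$ was arbitrary, $G$ has the comparability property. I expect the $CPC(g)$ step to be the main obstacle, since it is the only one not handed over directly by Lemma \ref{lm:Prop g+g-}: it requires propagating the commutation relation $rCg$ through the nonlinear square-root construction to $|g|$ (via the double-commutant clause of Theorem \ref{th:SqRoot}) and then to $g\sp{+}$, after which the double-commutant property of the carrier projection (Theorem \ref{th:CarrierProperty}) closes the argument. The remaining three conditions are essentially bookkeeping from Lemma \ref{lm:Prop g+g-}.
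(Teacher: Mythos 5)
Your proposal is correct, and three of the four conditions are verified exactly as the paper does (via parts (xi)--(xiv) of Lemma \ref{lm:Prop g+g-} for the order inequalities, and part (ix) for membership in $C(g)$). The one place you diverge is the step you single out as ``the heart of the proof,'' namely $p\in CPC(g)$: you establish it by a direct chase, fixing $r\in P\cap C(g)$ and propagating the commutation through $g\sp{2}$, then to $|g|$ via the double-commutant clause of Theorem \ref{th:SqRoot}, then to $g\sp{+}$, and finally invoking $p\in CC(g\sp{+})$. That chain is valid, but the paper dispatches this step in one line: since $P\cap C(g)\subseteq C(g)$ and the commutant operation reverses inclusions, $CC(g)=C(C(g))\subseteq C(P\cap C(g))=CPC(g)$, so the single fact $p\in CC(g)$ from Lemma \ref{lm:Prop g+g-} (ix) already delivers \emph{both} $p\in C(g)$ and $p\in CPC(g)$ simultaneously. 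In effect, your argument re-derives by hand, inside the proof, commutation information that part (ix) (itself resting on Theorem \ref{th:CarrierProperty} and the construction of $|g|$) has already packaged; the antitone property of commutants is the shortcut you missed. Nothing is wrong, but the step you expected to be the main obstacle is actually the most trivial one.
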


\begin{proof}
As $CC(g)\subseteq CPC(g)$, parts (ix) and (xi)--(xiv) of Lemma
\ref{lm:Prop g+g-} imply that $(g\sp{+})\dg\in P\sp{\pm}(g)$.
\end{proof}

In general, there may be more than one projection in $P\sp{\pm}(g)$,
but it can be shown that $(g\sp{+})\dg$ is the smallest such
projection \cite[Theorem 3.1]{FSR}. Moreover, no matter which
projection $p\in P\sp{\pm}(g)$ is chosen, one always has
$g\sp{+}=pg$ and $g\sp{-}=-(1-p)g$ \cite[Theorem 3.2]{FGCOMP}.

By \cite[Corollary 4.6]{FCPOAG}, $G$ is a so-called
\emph{compressible group} \cite[Definition 3.3]{FCG}, and since it
has the comparability property, it is a so-called \emph{comgroup}
\cite[Definition 1.1]{FSR}. Translating \cite[Definition
6.1]{FGCOMP} to our present context, we observe that $G$ has the
\emph{Rickart projection property} iff, for each $g\in G$, there
exists $g\,'\in G$ such that, for all $p\in P$, $p\leq
g\,'\Leftrightarrow pg=gp=0$. By Lemma \ref {lm:CarrierLemma} (iii),
$G$ has the Rickart projection property with $g\,' :=1-g\dg$ and
$g\,'\,'=g\dg$ for all $g\in G$. Therefore, $G$ is a so-called
\emph{Rickart comgroup} \cite[Definition 1.1]{FSR}, whence by
changing notation from $g\,'$ to $1-g\dg$ and from $g\,'\,'$ to
$g\dg$, we can invoke all the results of \cite{FSR} and
\cite{FGCOMP}.

\begin{lemma} \label{lm:LemmaA}
Let $g,h\in G$. Then: {\rm (i)} If $h\in CPC(g)$ and $g\leq h$, then
$g\sp{+}\leq h\sp{+}$.  {\rm (ii)} If $0\leq g\leq h$, then
$g\dg\leq h\dg$. {\rm (iii)} If $h\in CPC(g)$ and $g\leq h$, then
$(g\sp{+})\dg\leq (h\sp{+})\dg$. {\rm (iv)} If $(g\sp{+})\dg =1$,
then $0\leq g$. {\rm (v)} $(g\sp{+})\dg\leq(g\sp{+})\dg
\vee(g\sp{-})\dg=(g\sp{+})\dg+(g\sp{-})\dg=g\dg$.
\end{lemma}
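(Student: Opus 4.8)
The plan is to make (i) the main step, deduce (ii) from the carrier calculus, combine them for (iii), and settle (iv) and (v) by direct computation with $g\sp{+}$, $g\sp{-}$, and their carrier projections. For (i), the pivotal observation is that $p :=(g\sp{+})\dg$ is a projection lying in $CC(g)\subseteq C(g)$ by Theorem \ref{th:CarrierProperty} and Lemma \ref{lm:Prop g+g-}~(ix); since $h\in CPC(g)=C(P\cap C(g))$ and $p\in P\cap C(g)$, this forces $pCh$. I would then compress $g\leq h$ by $p$: because $pCg$ and $pCh$, Lemma \ref{lm:EProperties}~(iii) gives $g\sp{+}=pgp\leq php=ph$, using $pg=g\sp{+}$ from Lemma \ref{lm:Prop g+g-}~(xi). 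To close the estimate I would bound $ph\leq h\sp{+}$: since $h\sp{+},h\sp{-}\in CC(h)$ by Lemma \ref{lm:Prop g+g-}~(ii) and $p\in C(h)$, the projection $p$ commutes with both $h\sp{+}$ and $h\sp{-}$, so writing $ph=ph\sp{+}-ph\sp{-}$ one has $ph\sp{-}\geq 0$ (commuting positives) while $ph\sp{+}\leq h\sp{+}$ follows from $p\leq 1$ via Lemma \ref{lm:1/2Lemma}~(vi). Chaining yields $g\sp{+}\leq ph\leq h\sp{+}$.

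For (ii), I would invoke Lemma \ref{lm:lambdaLemma} to pick $\lambda>0$ with $e :=\lambda g$ and $f :=\lambda h$ in $E$; since multiplication by a nonzero scalar leaves annihilators unchanged, $e\dg=g\dg$ and $f\dg=h\dg$. From $0\leq e\leq f$ and Lemma \ref{lm:CarrierLemma}~(iv), $f\dg$ is the least projection dominating $f$, hence also dominates $e$, so minimality of $e\dg$ gives $e\dg\leq f\dg$. Part (iii) is then immediate: (i) supplies $0\leq g\sp{+}\leq h\sp{+}$, and (ii) applied to this pair gives $(g\sp{+})\dg\leq(h\sp{+})\dg$.

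For (iv), if $(g\sp{+})\dg=1$ then Lemma \ref{lm:Prop g+g-}~(xii) gives $(1-p)g=-g\sp{-}=0$, so $g=g\sp{+}\geq 0$. For (v), the first inequality is the join bound, and since $g\sp{+}g\sp{-}=0$ by Lemma \ref{lm:Prop g+g-}~(v), Lemma \ref{lm:CarrierLemma}~(vi) yields $(g\sp{+})\dg(g\sp{-})\dg=0$; thus the two carriers commute and Lemma \ref{lm:p-q,p+q}~(iv) collapses their join to the sum $(g\sp{+})\dg+(g\sp{-})\dg\in P$. To identify this sum with $g\dg$ I would verify the carrier characterization of Theorem \ref{th:CarrierProperty} for $r :=(g\sp{+})\dg+(g\sp{-})\dg$: if $gh=0$ then $g\sp{+}h=g\sp{-}h$, and left-multiplying by $g\sp{+}$ with $g\sp{+}g\sp{-}=0$ gives $(g\sp{+})\sp{2}h=0$, hence $g\sp{+}h=0$ and $g\sp{-}h=0$, so $rh=0$; conversely $rh=0$ forces $(g\sp{+})\dg h=(g\sp{-})\dg h=0$, hence $g\sp{+}h=g\sp{-}h=0$ and $gh=0$. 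Uniqueness of the carrier then gives $r=g\dg$.

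The main obstacle is part (i): the delicate points are extracting the commutation $pCh$ from $h\in CPC(g)$ and then proving $ph\leq h\sp{+}$, after which parts (ii)--(v) follow by comparatively routine carrier and projection arithmetic.
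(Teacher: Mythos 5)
Your proof is correct, but it takes a genuinely different route from the paper: for parts (i), (ii), (iv), and (v) the paper simply cites earlier results on compressible groups with general comparability (Lemmas 4.4(i) and 6.2(vi) and Theorem 6.5(ii),(v) of \cite{FGCOMP}), and only part (iii) is derived in-text from (i) and (ii), exactly as you do. You instead give a self-contained derivation from the lemmas already established in this paper, and the key steps all check out: in (i) the compression $g\sp{+}=pgp\leq php=ph$ with $p:=(g\sp{+})\dg$ uses Lemma \ref{lm:EProperties}~(iii) together with $p\in CC(g)\subseteq C(g)$ and the commutation $pCh$ extracted from $h\in CPC(g)$, and the bound $ph=ph\sp{+}-ph\sp{-}\leq ph\sp{+}\leq h\sp{+}$ is legitimate because $h\sp{\pm}\in CC(h)$ forces $pCh\sp{\pm}$; in (ii) the reduction to effects and Lemma \ref{lm:CarrierLemma}~(iv) is clean (the tacit step $(\lambda g)\dg=g\dg$ is the same scalar-invariance of annihilators the paper itself uses in the proof of Theorem \ref{th:CarrierProperty}); and in (v) your verification that $(g\sp{+})\dg+(g\sp{-})\dg$ satisfies the defining annihilation property of $g\dg$ is complete, modulo spelling out that $(g\sp{+})\sp{2}h=0\Rightarrow g\sp{+}h=0$ via QA, again exactly as in Theorem \ref{th:CarrierProperty}. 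What your approach buys is independence from \cite{FGCOMP} for this lemma; what the paper's approach buys is brevity and consistency with its general policy of invoking the Rickart-comgroup machinery wholesale.
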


\begin{proof}
For (i), see \cite[Lemma 4.4 (i)]{FGCOMP} and for (ii), see
\cite[Lemma 6.2 (vi)]{FGCOMP}. Clearly, (iii) follows from (i) and
(ii). For (iv), see \cite[Theorem 6.5 (v)]{FGCOMP}, and for (v), see
\cite[Theorem 6.5 (ii)]{FGCOMP}.
\end{proof}

\begin{definition} \label{df:Signum}
An element $s\in G$ is called a \emph{signum} of $g$ iff: (i) $s\in
C(g)\cap CPC(g)$, (ii) $0\leq sg=gs\in G$, (iii) $g=s\sp{2}g$, and
(iv) $\forall h\in G$, $gh=0\implies sh=0$. We say that $G$ has the
\emph{polar decomposition} (PD) \emph{property} \cite[Definition
4.3]{FoPuPD} iff every $g\in G$ has a signum $s\in G$.
\end{definition}

\begin{theorem} \label{th:PD}
Let $g\in G$. Then $s :=(g\sp{+})\dg-(g\sp{-})\dg$ is the unique
signum of $g$; hence $G$ has the polar decomposition {\rm (}PD{\rm
)} property. Moreover: {\rm (i)} $s\in CC(g)$. {\rm (ii)}
$g\dg=s\sp{2}$. {\rm (iii)} $|g|=sg=gs$. {\rm (iv)} $g$ has the
``polar decomposition" $g=s|g|=|g|s$. {\rm (v)} $|g|\dg=g\dg$.
\end{theorem}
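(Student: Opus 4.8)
The plan is to set $p := (g\sp{+})\dg$ and $q := (g\sp{-})\dg$, so that $s = p-q$, and to begin by extracting from Lemma~\ref{lm:LemmaA}~(v) the two facts that drive everything: $p+q = p\vee q = g\dg$ and, because $p+q\in P$, $pq = qp = 0$ by Lemma~\ref{lm:p-q,p+q}~(iii); thus $p,q$ are orthogonal projections. Immediately $s\sp{2} = p+q = g\dg$, which is (ii), and $s\sp{3} = (p-q)(p+q) = p-q = s$, an identity I reserve for the uniqueness step. For (i), $p\in CC(g)$ by Lemma~\ref{lm:Prop g+g-}~(ix); since $g\sp{-}\in CC(g)$ by Lemma~\ref{lm:Prop g+g-}~(ii) we have $C(g)\subseteq C(g\sp{-})$ and hence $CC(g\sp{-})\subseteq CC(g)$, so $q = (g\sp{-})\dg\in CC(g\sp{-})\subseteq CC(g)$ by Theorem~\ref{th:CarrierProperty}; therefore $s = p-q\in CC(g)$. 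As $CC(g)\subseteq C(g)\cap CPC(g)$, this already secures condition (i) of the definition of signum.

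I would then verify the remaining signum conditions together with (iii) and (iv). Applying parts (xi) and (xiii) of Lemma~\ref{lm:Prop g+g-} both to $g$ and to $-g$---using (vi), (vii), (viii) to rewrite $(-g)\sp{+} = g\sp{-}$, $|-g| = |g|$, and $((-g)\sp{+})\dg = q$---yields $pg = g\sp{+}$, $qg = -g\sp{-}$, $p|g| = g\sp{+}$, and $q|g| = g\sp{-}$. Hence $sg = g\sp{+}+g\sp{-} = |g|$ and $s|g| = g\sp{+}-g\sp{-} = g$; since $s\in CC(g)$ while $g,|g|\in C(g)$, the element $s$ commutes with both $g$ and $|g|$, so these products may be taken in either order, giving (iii), (iv), and the signum condition $0\leq sg = gs$. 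The condition $g = s\sp{2}g$ is just (ii) combined with Lemma~\ref{lm:CarrierLemma}~(ii): $s\sp{2}g = g\dg g = g$. For the annihilation condition, $gh=0$ forces $g\dg h = 0$ by Theorem~\ref{th:CarrierProperty}, i.e. $(p+q)h = 0$; left-multiplying by $p$ and by $q$ and using $pq=qp=0$ gives $ph = qh = 0$, whence $sh = 0$. Thus $s$ is a signum, and property (v) follows at once: $|g| = sg$ and $g = s|g|$ together give $gh=0\Leftrightarrow|g|h=0$ for all $h$, so $g$ and $|g|$ share the same carrier by the uniqueness in Theorem~\ref{th:CarrierProperty}.

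The main obstacle is uniqueness. Let $s'$ be any signum of $g$. First I would show $s'g = |g|$: setting $u := s'g = gs'\geq 0$, the commutation $s'\in C(g)$ and the signum identity $g = s'\sp{2}g$ give $u\sp{2} = (s'g)(gs') = s'g\sp{2}s' = s'\sp{2}g\sp{2} = (s'\sp{2}g)g = g\sp{2} = |g|\sp{2}$, so $u$ and $|g|$ are nonnegative elements with equal squares and the uniqueness of positive square roots (Theorem~\ref{th:SqRoot}) forces $u = |g|$. Then $(s'-s)g = |g|-|g| = 0$, and the left-annihilation form of the carrier property upgrades this to $(s'-s)g\dg = 0$, i.e. $s'g\dg = sg\dg = s\sp{3} = s$. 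Finally, applying the signum annihilation condition for $s'$ to $h = 1-g\dg$ (legitimate since $g(1-g\dg) = g-g = 0$ by Lemma~\ref{lm:CarrierLemma}~(ii)) gives $s'(1-g\dg) = 0$, that is $s' = s'g\dg$; combining, $s' = s'g\dg = s$. The only delicate points are the square-root identity for $u$ and invoking the correct (left) half of the carrier property; the rest is bookkeeping with the orthogonal pair $p,q$.
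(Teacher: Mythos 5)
Your proof is correct, but it takes a different route from the paper: where the paper disposes of existence, uniqueness, and parts (ii)--(iv) by citing external results on Rickart comgroups with comparability (Theorem 4.10, Lemma 4.4, and Theorem 4.7(iii) of the polar-decomposition paper \cite{FoPuPD}), you give a complete internal verification using only the machinery already established here. Your key observations all check out: Lemma \ref{lm:LemmaA}(v) gives $p+q=g\dg\in P$, so Lemma \ref{lm:p-q,p+q}(iii) makes $p,q$ orthogonal and yields $s\sp{2}=g\dg$ and $s\sp{3}=s$ at once; the identities $pg=g\sp{+}$, $qg=-g\sp{-}$, $p|g|=g\sp{+}$, $q|g|=g\sp{-}$ follow correctly from Lemma \ref{lm:Prop g+g-}(xi),(xiii) applied to $g$ and $-g$; and your inclusion $CC(g\sp{-})\subseteq CC(g)$ (from $g\sp{-}\in CC(g)$, hence $C(g)\subseteq C(g\sp{-})$) is a legitimate variant of the paper's route via $(g\sp{-})\dg=((-g)\sp{+})\dg\in CC(-g)=CC(g)$. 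The uniqueness argument is the genuinely valuable addition: computing $u\sp{2}=s'\sp{2}g\sp{2}=g\sp{2}$ from signum conditions (i) and (iii), invoking the uniqueness of positive square roots (Theorem \ref{th:SqRoot}) to get $s'g=|g|$, then passing through the left-annihilation form of the carrier property to get $s'g\dg=sg\dg=s\sp{3}=s$, and finally killing $s'(1-g\dg)$ via signum condition (iv) --- each of the four signum axioms is used exactly once, and the argument is airtight. What your approach buys is self-containedness and transparency about which hypotheses do what; what the paper's citation buys is brevity and the reassurance that the result holds in the more general setting of Rickart comgroups with comparability, not just in AH-algebras. Your proof of (v) coincides with the paper's.
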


\begin{proof}
As $G$ has both the carrier and comparability properties,
\cite[Theorem 4.10]{FoPuPD} implies that the signum $s$ of $g$
exists, $s$ is uniquely determined by $g$, and $s=(g\sp{+})\dg
-(g\sp{-})\dg$. By Lemma \ref{lm:Prop g+g-} (ix), $(g\sp{+})\dg \in
CC(g)$. Likewise, by Lemma \ref{lm:Prop g+g-} $(g\sp{-})\dg
=((-g)\sp{+})\dg\in CC(-g)=CC(g)$, and (i) follows.   See
\cite[Lemma 4.4 and Theorem 4.7 (iii)]{FoPuPD} for proofs of (ii),
(iii), and (iv). To prove (v), we note that $gh=0\Rightarrow sgh=0
\Rightarrow |g|h=0\Rightarrow s|g|h=0\Rightarrow gh=0$, so
$gh=0\Leftrightarrow |g|h=0$.
\end{proof}

\begin{theorem} \label{th:Invertibility}
Let $g\in G$. Then the following conditions are mutually equivalent:
{\rm (i)} $g$ is invertible. {\rm (ii)} $|g|$ is invertible. {\rm
(iii)} There exists $0<\lambda\in\reals$ such that $\lambda\cdot
1\leq |g|$. Moreover, if $g\sp{-1}$ exists, then $g\sp{-1}\in CC(g)$
and the signum $s$ of $g$ satisfies $s\sp{2}=1$.
\end{theorem}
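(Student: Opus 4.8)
The plan is to establish the cycle of implications (i) $\Rightarrow$ (ii) $\Rightarrow$ (iii) $\Rightarrow$ (i), exploiting the polar decomposition $g = s|g| = |g|s$ from Theorem \ref{th:PD} (iv) together with the fact that $s \in CC(g)$, so that $s$ commutes with $g$, with $|g|$, and with $g^{-1}$ whenever the latter exists. The engine for the $|g|$-side is Theorem \ref{th:PosInverse}, which for a \emph{positive} element $0 \leq |g|$ characterizes invertibility by the existence of $M \in \Nat$ with $1 \leq M|g|$; this is exactly condition (iii) up to setting $\lambda := 1/M$ (or conversely $M := \lceil 1/\lambda \rceil$). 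Thus the equivalence of (ii) and (iii) is essentially a restatement of Theorem \ref{th:PosInverse} applied to $|g|$, and the only thing to check is that $\lambda\cdot 1 \leq |g|$ for some real $\lambda>0$ is interchangeable with $1 \leq M|g|$ for some integer $M$, which is immediate since $G$ is a real vector space by Theorem \ref{th:Jordan}.

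For (i) $\Rightarrow$ (ii): assuming $g^{-1}$ exists, I would first argue $s$ is invertible. Since $g = s|g|$ and $s^2 = g\dg$ by Theorem \ref{th:PD} (ii), I want to show $g\dg = 1$ when $g$ is invertible; indeed $g g^{-1} = 1$ forces the carrier $g\dg = 1$ because $g(1-g\dg) = 0$ (Lemma \ref{lm:CarrierLemma} (ii) gives $g = g\,g\dg$, so $g(1-g\dg)=0$) would give $1 - g\dg = g^{-1}g(1-g\dg) = 0$. Hence $s^2 = 1$, so $s$ is its own inverse. Then from $|g| = sg$ (Theorem \ref{th:PD} (iii)) and the commutativity $s,g^{-1} \in CC(g)$, the element $|g|^{-1} := g^{-1}s$ satisfies $|g|\,(g^{-1}s) = sg\,g^{-1}s = s\cdot 1\cdot s = s^2 = 1$, with the reversed product handled the same way using that everything in sight commutes. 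This simultaneously delivers (ii) and the final clause $s^2 = 1$.

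For (iii) $\Rightarrow$ (i): with $|g|$ invertible (via the (ii)$\Leftrightarrow$(iii) step), set $h := |g|^{-1}s$ and verify $gh = hg = 1$ using $g = |g|s = s|g|$, $s^2 = 1$, and the mutual commutativity guaranteed by $s, |g|^{-1} \in CC(g)$. The step $s^2 = 1$ needs care here since a priori we only have $s^2 = g\dg$; but $|g|$ invertible forces $|g|\dg = 1$, and $|g|\dg = g\dg$ by Theorem \ref{th:PD} (v), so again $s^2 = g\dg = 1$. The main obstacle I anticipate is \emph{bookkeeping the commutativity}: every manipulation of $g^{-1}$, $s$, and $|g|^{-1}$ relies on these elements lying in $CC(g)$, so I must confirm at each stage that the inverses inherit commutativity — for $g^{-1}$ this is the final clause of Theorem \ref{th:PosInverse} (applied through $|g|$) and the fact that an inverse commutes with everything the element commutes with, and for $s$ it is Theorem \ref{th:PD} (i). Once the commutativity is pinned down, the algebra collapses to repeated use of $s^2 = 1$, and the three implications close the cycle.
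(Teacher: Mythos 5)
Your proposal is correct and follows essentially the same route as the paper: both rest on the polar decomposition $g=s|g|=|g|s$ with $s,|g|\in CC(g)$, deduce the equivalence of (i) and (ii) from the explicit formulas $|g|^{-1}=sg^{-1}$ and $g^{-1}=s|g|^{-1}$, obtain (ii)$\Leftrightarrow$(iii) from Theorem \ref{th:PosInverse} applied to $|g|$, and get $s^2=g\dg=1$ from the observation that invertibility forces the carrier to be $1$. Your version merely spells out the commutativity bookkeeping that the paper leaves implicit.
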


\begin{proof}
Let $s$ be the signum of $g$. As $s\in CC(g)$ and $|g|\in CC(g)$,
the desired equivalences follow from Theorem \ref{th:PosInverse} and
the obvious facts that if $g\sp{-1}$ exists, then $|g|\sp{-1}
=sg\sp{-1}$, and if $|g|\sp{-1}$ exists, then
$g\sp{-1}=s|g|\sp{-1}$. Also, if $g\sp{-1}$ exists, it is clear that
if $h\in G$, then $gh=0 \Leftrightarrow h=0$, so $g\dg=1$, and
therefore $s\sp{2}=g\dg=1$ by Theorem \ref{th:PD} (ii).
\end{proof}

\section{States and the {\boldmath $1$}-Norm} 

We maintain Standing Assumption \ref{as:AH-alg}.

\begin{definition} \label{df:state}
If we regard $G$ and $\reals$ as a ordered additive abelian groups,
then an order-preserving group homomorphism $\omega\colon
G\to\reals$ such that $\omega(1)=1$ is called a \emph{state} for $G$
\cite [p. 60]{Good}. We denote the set of all states for $G$ by
$\Omega(G)$, or simply as $\Omega$ if $G$ is understood.
\end{definition}

Note that $\Omega$ is a convex subset of the locally convex real
linear topological space $\reals\sp{G}$ of real-valued functions on
$G$ with the topology of pointwise convergence. Equipped with the
relative topology inherited from $\reals\sp{G}$, $\Omega$ is a
nonempty compact set \cite[Corollary 4.4 and Proposition 6.5]{Good}
called the \emph{state space} of $G$. By \cite[Lemma 6.7]{Good},
every state $\omega\in\Omega$ is a linear functional on the real
linear space $G$.

\begin{theorem} \label{th:OrderDetermining}
$\Omega$ is ``order determining" in the sense that, for $g,h\in
G$, $g\leq h\Leftrightarrow \omega(g)\leq\omega(h)$ for all
$\omega\in\Omega$.
\end{theorem}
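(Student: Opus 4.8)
The forward implication is immediate and requires no hypotheses beyond the definitions: if $g \le h$, then $h - g \in E\sp{+}$, and since each $\omega \in \Omega$ is an order-preserving homomorphism with $\omega(0) = 0$, we get $0 \le \omega(h - g) = \omega(h) - \omega(g)$. All the content is in the converse, which I would establish in contrapositive form: assuming $k := h - g \notin E\sp{+}$ (equivalently $g \not\le h$), I would produce a single state $\omega \in \Omega$ with $\omega(k) < 0$.

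The plan is to run a Hahn--Banach separation argument inside the real vector space $G$ furnished by Theorem \ref{th:Jordan}, using crucially that this space is archimedean (Theorem \ref{th:1/2+CV}(ii)) and carries the order unit $1$. First I would equip $G$ with the order-unit norm
\[
\|g\| := \inf\{\lambda \in \reals : \lambda > 0 \text{ and } -\lambda\cdot 1 \le g \le \lambda\cdot 1\},
\]
and record three consequences of the archimedean property: that $\|\cdot\|$ is a genuine norm (if $\|g\| = 0$, then $ng \le 1$ and $-ng \le 1$ for every $n \in \Nat$, forcing $g = 0$); that the positive cone $E\sp{+}$ is $\|\cdot\|$-closed (if $g_m \ge 0$ and $\|g - g_m\| \to 0$, then $-\varepsilon\cdot 1 \le g$ for every $\varepsilon > 0$, whence $g \ge 0$); and that $1$ is an interior point of $E\sp{+}$ (the open unit ball about $1$ is contained in $E\sp{+}$).

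With these in place, $k$ lies outside the nonempty closed convex set $E\sp{+}$, so the separation form of the Hahn--Banach theorem yields a continuous linear functional $f \ne 0$ on $G$ and a constant $c \in \reals$ with $f(k) < c \le f(x)$ for all $x \in E\sp{+}$. Since $E\sp{+}$ is a cone containing $0$, positivity of $f$ on $E\sp{+}$ and $c \le 0$ follow by scaling, so in fact $f(x) \ge 0$ for all $x \in E\sp{+}$ while $f(k) < 0$. Because $1$ is an order unit and $f \ne 0$, positivity forces $f(1) > 0$ (otherwise $f$ would vanish on all of $G$), so $\omega := f / f(1)$ is a state with $\omega(k) < 0$, completing the contrapositive. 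Alternatively, since states, the state space, and its compactness are exactly the notions of \cite{Good} imported in Definition \ref{df:state}, one may quote the corresponding order-determining theorem for archimedean partially ordered abelian groups with order unit directly from \cite{Good}.

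I expect the genuinely delicate points to be, first, the closedness of $E\sp{+}$ in the order-unit norm and the interiority of $1$ --- both of which rest squarely on the archimedean property --- and, second, the passage from a merely separating functional to a \emph{positive} one: it is essential here to exploit that $E\sp{+}$ is a cone, not just a convex set, so that $f$ is bounded below on $E\sp{+}$ only if it is nonnegative there, after which normalization by $f(1)$ delivers an honest state.
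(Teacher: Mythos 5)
Your proof is correct, but it takes a more self-contained route than the paper, whose entire proof is a one-line appeal to \cite[Theorem 4.14]{Good}: since $G$ is archimedean (Theorem \ref{th:1/2+CV} (ii)) and $1$ is an order unit, Goodearl's order-determining theorem applies verbatim --- exactly the alternative you mention in your final sentence. Your main argument instead unfolds that citation: you introduce the order-unit norm (which the paper only defines later, in Definition \ref{df:Norm}), verify directly from archimedeanness that it is a genuine norm and that $E\sp{+}$ is norm-closed, and then separate $k=h-g$ from the closed cone $E\sp{+}$ by Hahn--Banach, using the cone property to upgrade the separating functional to a positive one and the order-unit property to get $f(1)>0$ before normalizing. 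This is sound, makes visible exactly where archimedeanness enters, and avoids any circularity with the paper's later Theorem \ref{th:Norm} (whose properties are themselves imported from \cite{Good}) because you work only from the definition of the norm. Two minor remarks: the interior-point property of $1$ that you list is never actually used in your separation argument (and, unlike your other two preparatory facts, it does not depend on archimedeanness); and Goodearl's own proof of the cited theorem proceeds via the sublinear functional $p(x)=\inf\{k/n : nx\leq ku\}$ and Hahn--Banach \emph{extension} rather than separation, though this is the same circle of ideas. In short, the citation keeps the paper brief, while your argument makes the result self-contained modulo classical functional analysis.
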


\begin{proof} As $G$ is archimedean, \cite[Theorem 4.14]{Good} implies
that $0\leq h-g\Leftrightarrow 0\leq\omega(h-g)=\omega(h)-\omega(g)$
for all $\omega\in \Omega$.
\end{proof}

\begin{definition} \label{df:Norm}
Define the \emph{$1$-norm} $\|\cdot\|\colon G\to\reals\sp{+}$ by
$$\|g\| :=\inf\{\lambda\in\reals: 0\leq\lambda\text{\ and\ }
-\lambda\cdot 1\leq g\leq \lambda\cdot 1\}$$ for all $g\in G$.
\end{definition}

\begin{theorem} \label{th:Norm}
The $1$-norm $\|\cdot\|$ is a norm on the real linear space $G$.
Moreover, for all $g,h\in G$:  {\rm (i)} $\|g\|=\max\{|\omega(g)|:
\omega\in\Omega\}$. {\rm (ii)} $-h\leq g\leq h\Rightarrow\|g\|\leq
\|h\|$. {\rm (iii)} $0\not=p\Rightarrow\|p\|=1$. {\rm (iv)} $\|pgp\|
\leq\|g\|$. {\rm (v)} If $\beta\sb{i},\,\beta\in\reals$, $0\leq
\beta\sb{i}\leq\beta$, and $0\leq u\sb{i}\in G$ for all $i=1,2,...,
n$, then $\|\sum\sb{i=1}\sp{n}\beta\sb{i}u\sb{i}\|\leq \beta\|\sum
\sb{i=1}\sp{n}u\sb{i}\|$.
\end{theorem}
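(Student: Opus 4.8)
The plan is to recognize $\|\cdot\|$ as the order-unit norm determined by the order unit $1$, and to prove part (i) first, since every remaining assertion will then follow from (i) together with the sandwich inequality $-\|g\|\cdot 1\le g\le\|g\|\cdot 1$. First I would check that $\|g\|$ is a well-defined nonnegative real: because $1$ is an order unit in $G$, there is $n\in\Nat$ with $-n\cdot 1\le g\le n\cdot 1$, so the set in the definition is nonempty and bounded below by $0$.

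For part (i) I would prove two inequalities. If $-\lambda\cdot 1\le g\le\lambda\cdot 1$, then applying any state $\omega$ (which is order-preserving, $\reals$-linear, and satisfies $\omega(1)=1$) gives $-\lambda\le\omega(g)\le\lambda$, hence $|\omega(g)|\le\lambda$; taking the supremum over $\omega$ and the infimum over $\lambda$ yields $\sup_{\omega}|\omega(g)|\le\|g\|$. Conversely, set $\mu:=\sup_{\omega}|\omega(g)|$. For every $\omega$ we have $\omega(g)\le\mu=\omega(\mu\cdot 1)$ and $-\mu=\omega(-\mu\cdot 1)\le\omega(g)$, so by the order-determining property (Theorem \ref{th:OrderDetermining}) we conclude $-\mu\cdot 1\le g\le\mu\cdot 1$, whence $\|g\|\le\mu$. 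Thus $\|g\|=\sup_\omega|\omega(g)|$, and since $\omega\mapsto|\omega(g)|$ is continuous on the compact space $\Omega$, the supremum is attained, giving $\|g\|=\max_\omega|\omega(g)|$. In particular the infimum is attained, i.e. $-\|g\|\cdot 1\le g\le\|g\|\cdot 1$.

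Next I would verify the norm axioms. Homogeneity $\|cg\|=|c|\,\|g\|$ and the triangle inequality $\|g+h\|\le\|g\|+\|h\|$ are immediate from the definition (for $c\ne 0$ the set of admissible $\lambda$ for $cg$ is exactly $|c|$ times that for $g$; for the triangle inequality one adds the two sandwich inequalities). For definiteness, if $\|g\|=0$ then by (i) $\omega(g)=0$ for all $\omega$, so $\omega(g)\le 0$ and $\omega(-g)\le 0$ for all $\omega$, and Theorem \ref{th:OrderDetermining} gives $g\le 0\le g$, i.e. $g=0$ (alternatively one invokes that $G$ is archimedean, Theorem \ref{th:1/2+CV}(ii), directly from the sandwich inequalities).

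Finally the remaining parts reduce to (i) and the sandwich inequality. For (ii), if $-h\le g\le h$ then $g\le h\le\|h\|\cdot 1$ and $g\ge -h\ge -\|h\|\cdot 1$, so $\|g\|\le\|h\|$. For (iv), applying the order-preserving map $x\mapsto pxp$ (Lemma \ref{lm:EProperties}(iii)) to $-\|g\|\cdot 1\le g\le\|g\|\cdot 1$ and using $p\cdot 1\cdot p=p\le 1$ yields $-\|g\|\cdot 1\le pgp\le\|g\|\cdot 1$. For (v), I would evaluate at each state: $\reals$-linearity together with $0\le\omega(u_i)$ gives $0\le\omega(\sum_i\beta_iu_i)\le\beta\,\omega(\sum_iu_i)$, and taking maxima over $\omega$ delivers the claim. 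The one part needing a short multiplicative argument is (iii): $0\le p\le 1$ gives $\|p\|\le 1$, while if $\|p\|=c<1$ then $p\le c\cdot 1$ by Theorem \ref{th:OrderDetermining}, and multiplying this inequality by the commuting positive element $p$ (Lemma \ref{lm:1/2Lemma}(vi)) gives $p=p^2\le cp$, i.e. $(1-c)p\le 0$, forcing $p=0$ and contradicting $p\ne 0$; hence $\|p\|=1$. I expect the reverse inequality in part (i), which rests entirely on the order-determining property, to be the crux, with the rest amounting to bookkeeping.
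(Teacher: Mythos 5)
Your proof is correct and follows essentially the same route as the paper, which simply delegates the norm axioms and parts (i)--(iv) to cited references (Goodearl, pp.~120--121, for the order-unit-norm/state-space facts, and an earlier paper of Foulis for (iii) and (iv)); the argument you write out---the two inequalities for (i) via states and the order-determining property, the attained sandwich $-\|g\|\cdot 1\le g\le\|g\|\cdot 1$, and the reductions of (ii), (iv), (v) to it---is precisely the content of those references, and your state-evaluation proof of (v) is equivalent to the paper's own short derivation of (v) from the sandwich $-\beta\sum u_i\le\sum\beta_i u_i\le\beta\sum u_i$ together with (ii). Your direct arguments for (iii) (deducing $(1-\|p\|)p\le 0$ by multiplying $p\le\|p\|\cdot 1$ by $p$) and for (iv) (applying the order-preserving map $x\mapsto pxp$) are sound, self-contained replacements for the citations.
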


\begin{proof}
That $\|\cdot\|$ is a norm on $G$ as well as properties (i) and (ii)
can be deduced from the results in \cite [pp. 120--121]{Good}. For
(iii) and (iv), see \cite[Theorem 3.3 (viii), (ix)]{FSR}. Let
$0\leq \lambda\in\reals$.  By the hypotheses of (v), $-\beta\sum
\sb{i=1}\sp{n}u\sb{i}\leq 0\leq\sum\sb{i=1}\sp{n}\beta\sb{i}u\sb{i}
\leq\beta\sum\sb{i=1}\sp{n}u\sb{i}$, whence (v) follows from (ii).
\end{proof}

As is well-known, for the archimedean directed group $\GH$, the
$1$-norm coincides with the uniform operator norm.

\begin{theorem} \label{th:NormofProduct}
Let $g,h\in G$. Then: {\rm (i)} $-1\leq g\leq 1\Leftrightarrow
g\sp{2}\leq 1$. {\rm (ii)} $\|g\sp{2}\|=\|g\|\sp{2}$. {\rm (iii)}
$h=|g|\Rightarrow\|h\|=\|g\|$. {\rm (iv)} $gCh\Rightarrow\|gh\|
\leq\|g\|\|h\|$.
\end{theorem}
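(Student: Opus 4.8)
The plan is to reduce all four parts to part (i), which is the crux. For the forward direction of (i), I would observe that $1-g$ and $1+g$ are both positive (since $-1\leq g\leq 1$) and commute, being polynomials in $g$; hence their product is positive by Definition \ref{df:e-ring} (iii), and since $(1-g)(1+g)=1-g\sp{2}$, this gives $g\sp{2}\leq 1$. For the converse, I would pass through the square root: from $0\leq g\sp{2}\leq 1$, together with the fact that $g\sp{2}$ commutes with $1$, Theorem \ref{th:MonotonicitySR} (ii) yields $|g|=(g\sp{2})\sp{1/2}\leq 1\sp{1/2}=1$. Writing $g=g\sp{+}-g\sp{-}$ and $|g|=g\sp{+}+g\sp{-}$ with $0\leq g\sp{+},g\sp{-}$ (Lemma \ref{lm:Prop g+g-}), I get $|g|-g=2g\sp{-}\geq 0$ and $|g|+g=2g\sp{+}\geq 0$, so $-1\leq -|g|\leq g\leq|g|\leq 1$.

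For the remaining parts I would first record that the $1$-norm is attained, i.e. $-\|g\|\cdot 1\leq g\leq\|g\|\cdot 1$, which follows from Theorem \ref{th:Norm} (i) together with the order-determining property of $\Omega$ (Theorem \ref{th:OrderDetermining}). Rescaling part (i) by a factor $\lambda>0$ gives the key equivalence $-\lambda\cdot 1\leq g\leq\lambda\cdot 1\Leftrightarrow g\sp{2}\leq\lambda\sp{2}\cdot 1$. Since $0\leq g\sp{2}$, the lower bound in the definition of $\|g\sp{2}\|$ is automatic, so $\|g\sp{2}\|=\inf\{\mu\geq 0:g\sp{2}\leq\mu\cdot 1\}$; comparing this with $\|g\|=\inf\{\lambda\geq 0:g\sp{2}\leq\lambda\sp{2}\cdot 1\}$ and using that $\lambda\mapsto\lambda\sp{2}$ is an increasing bijection of $[0,\infty)$, I obtain $\|g\sp{2}\|=\|g\|\sp{2}$, proving (ii). Part (iii) is then immediate: by Lemma \ref{lm:Prop g+g-} (i), $|g|\sp{2}=g\sp{2}$, so $\||g|\|\sp{2}=\||g|\sp{2}\|=\|g\sp{2}\|=\|g\|\sp{2}$ by (ii), and taking nonnegative square roots gives $\||g|\|=\|g\|$.

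For (iv), set $\alpha:=\|g\|$ and $\beta:=\|h\|$; the case $\alpha\beta=0$ is trivial, since then $g=0$ or $h=0$ and $gh=0$. Otherwise, from $gCh$ I have $gh\in G$ and $(gh)\sp{2}=g\sp{2}h\sp{2}$, and the scaled form of (i) gives $g\sp{2}\leq\alpha\sp{2}\cdot 1$ and $h\sp{2}\leq\beta\sp{2}\cdot 1$. Since $g$ and $h$ commute, $h\sp{2}$ is a positive element commuting with both $g\sp{2}$ and $1$, so Lemma \ref{lm:1/2Lemma} (vi) gives $g\sp{2}h\sp{2}\leq\alpha\sp{2}h\sp{2}$; multiplying $h\sp{2}\leq\beta\sp{2}\cdot 1$ by the scalar $\alpha\sp{2}\geq 0$ gives $\alpha\sp{2}h\sp{2}\leq\alpha\sp{2}\beta\sp{2}\cdot 1$. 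Hence $(gh)\sp{2}\leq(\alpha\beta)\sp{2}\cdot 1$, and the scaled form of (i) applied to $gh$ yields $\|gh\|\leq\alpha\beta=\|g\|\|h\|$. The main obstacle I anticipate is the bookkeeping of the commutation hypotheses in (iv): one must verify that $gh\in G$, that $(gh)\sp{2}=g\sp{2}h\sp{2}$, and that $h\sp{2}$ commutes with $g\sp{2}$ so that the two inequalities may be chained, all of which rest on $gCh$ together with Lemma \ref{lm:1/2Lemma} (i).
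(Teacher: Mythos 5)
Your proof is correct, and the overall skeleton matches the paper's: everything is reduced to part (i), which is then rescaled to give the equivalence $-\lambda\cdot 1\leq g\leq\lambda\cdot 1\Leftrightarrow g\sp{2}\leq\lambda\sp{2}\cdot 1$, from which (ii) and (iii) follow. You diverge from the paper in two places, both legitimately. For the converse of (i), the paper simply cites \cite[Lemma 4.3 (iii)]{FSRI}, whereas you give a self-contained argument: $g\sp{2}\leq 1$ gives $|g|\leq 1$ by Theorem \ref{th:MonotonicitySR} (ii), and then $\pm g\leq |g|$ via $|g|\mp g=2g\sp{\mp}\geq 0$ from Lemma \ref{lm:Prop g+g-}; this is a nice internal replacement for the external reference. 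For (iv), the paper first reduces to $0\leq g,h$ (via $|g|C|h|$ and part (iii) --- a step that tacitly uses $|gh|=|g|\,|h|$), normalizes to effects $e:=\|g\|\sp{-1}g$, $f:=\|h\|\sp{-1}h$, and invokes Lemma \ref{lm:EProperties} (i) to get $0\leq ef\leq 1$, hence $\|ef\|\leq 1$. You instead stay with squares: $(gh)\sp{2}=g\sp{2}h\sp{2}\leq\|g\|\sp{2}\|h\|\sp{2}\cdot 1$ by chaining Lemma \ref{lm:1/2Lemma} (vi) with scalar multiplication, and then apply the rescaled (i) to $gh$. Your route avoids the reduction to absolute values entirely and makes the commutation bookkeeping explicit, at the cost of one extra application of Lemma \ref{lm:1/2Lemma} (vi); the paper's route is shorter once the (unstated) identity $|gh|=|g|\,|h|$ is granted. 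Both arguments are sound.
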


\begin{proof}
If $-1\leq g\leq 1$, then $0\leq 1-g,1+g$ with $(1-g)C(1+g)$,
whence $0\leq(1-g)(1+g)=1-g\sp{2}$, i.e., $g\sp{2}\leq 1$.
Conversely, $g\sp{2}\leq 1\Rightarrow -1\leq g\leq 1$ follows
from \cite[Lemma 4.3 (iii)]{FSRI}, proving (i). If $0<\lambda
\in\reals$, then by replacing $g$ by $\lambda\sp{-1}g$ in (i),
we deduce that $-\lambda\cdot1\leq g\leq\lambda\cdot 1
\Leftrightarrow g\sp{2}\leq\lambda\sp{2}$, from which
(ii) follows. If $h=|g|$, then $h\sp{2}=g\sp{2}$, so
$\|h\|\sp{2}=\|g\|\sp{2}$ by (ii), and (iii) follows. To
prove (iv), suppose that $gCh$. Then $|g|C|h|$, so by
(iii) we can assume without loss of generality that
$0\leq g,h$. Moreover, we can assume that $g,h\not=0$,
so that $\|g\|,\|h\|\not=0$, and define $e :=\|g\|\sp{-1}g$ and
$f :=\|h\|\sp{-1}h$. Then $e,f\in E$ with $ef=fe$, hence
$0\leq ef\leq 1$ by Lemma \ref{lm:EProperties} (i), and it
follows that $\|ef\|\leq 1$. Therefore, $\|gh\|=
\|g\|\|h\|\|ef\|\leq \|g\|\|h\|$.
\end{proof}

Recall that $G$ is said to be \emph{monotone $\sigma$-complete} iff
every ascending sequence in $G$ that is bounded above in $G$ has a
supremum in $G$ \cite[Chapter 16]{Good}. Thus, if $G$ has property
$V$, it is monotone $\sigma$-complete.

\begin{theorem} \label{th:Banach}
If $G$ is monotone $\sigma$-complete, then it is a real Banach space
under the $1$-norm.
\end{theorem}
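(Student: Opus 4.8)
The plan is to establish completeness in the $1$-norm by the standard order-theoretic argument for monotone $\sigma$-complete order-unit spaces. Let $(g_n)_{n\in\Nat}$ be a $\|\cdot\|$-Cauchy sequence. Since a Cauchy sequence possessing a convergent subsequence converges to the same limit, I would first pass to a subsequence and relabel so that $\|g_{n+1}-g_n\|\le 2^{-n}$ for all $n$; by the definition of the $1$-norm this means $-2^{-n}\cdot 1\le g_{n+1}-g_n\le 2^{-n}\cdot 1$ for all $n$.

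Next I would manufacture two monotone sequences that squeeze $(g_n)$. Using that $G$ is a real vector space (Theorem \ref{th:Jordan}), put $u_n := g_n + 2^{-n+1}\cdot 1$ and $v_n := g_n - 2^{-n+1}\cdot 1$. The direct computations $u_{n+1}-u_n=(g_{n+1}-g_n)-2^{-n}\cdot 1\le 0$ and $v_{n+1}-v_n=(g_{n+1}-g_n)+2^{-n}\cdot 1\ge 0$ show that $(u_n)$ is descending and $(v_n)$ is ascending, while $v_n\le g_n\le u_n$ and $u_n-v_n=2^{-n+2}\cdot 1$ for every $n$. Since $(v_n)$ is ascending and bounded above by $u_1$, monotone $\sigma$-completeness yields a supremum $g:=\sup_n v_n\in G$; dually, applying the hypothesis to the ascending sequence $(-u_n)$ (bounded above by $-v_1$), the descending sequence $(u_n)$ has an infimum $g':=\inf_n u_n\in G$. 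From $v_n\le u_m$ for all $n,m$ one obtains $v_n\le g\le g'\le u_n$ for every $n$.

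I would then collapse $g$ and $g'$ using the archimedean property of $G$ (Theorem \ref{th:1/2+CV} (ii)). From $0\le g'-g\le u_n-v_n=2^{-n+2}\cdot 1$ for all $n$, together with $g'-g\ge 0$, it follows that for any $k\in\Nat$ one may choose $n$ with $2^{n-2}\ge k$ and conclude $k(g'-g)\le 2^{n-2}(g'-g)\le 1$; thus $k(g'-g)\le 1$ for all $k\in\Nat$, and the archimedean property forces $g'-g\le 0$, whence $g=g'$. Finally, since both $g_n$ and $g$ lie in the order interval $[v_n,u_n]$, we have $-2^{-n+2}\cdot 1\le g_n-g\le 2^{-n+2}\cdot 1$, so $\|g_n-g\|\le 2^{-n+2}\to 0$ by Theorem \ref{th:Norm} (ii). Hence the subsequence converges to $g$, and therefore so does the original Cauchy sequence.

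The only delicate points are the choice of the correction terms $\pm 2^{-n+1}\cdot 1$, calibrated so that the two auxiliary sequences are exactly monotone while still trapping $(g_n)$, and the archimedean squeeze that identifies $\sup_n v_n$ with $\inf_n u_n$; the remaining steps are routine. The argument rests on monotone $\sigma$-completeness (to produce the supremum and infimum) and the archimedean property (to close the gap between them), both of which are in force.
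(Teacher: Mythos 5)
Your proof is correct. Note, though, that the paper does not actually prove this theorem itself: it simply cites Handelman (Proposition 3.9 of the reference \cite{Hand}), so your argument supplies the details that the paper outsources. What you give is the standard completeness argument for an archimedean, monotone $\sigma$-complete order-unit group: pass to a rapidly Cauchy subsequence, trap it between a descending sequence $(u_n)$ and an ascending sequence $(v_n)$ whose gap is $2^{-n+2}\cdot 1$, use monotone $\sigma$-completeness to produce $\sup_n v_n$ and $\inf_n u_n$, and use the archimedean property (available here by Theorem \ref{th:1/2+CV}(ii)) to identify them. All the ingredients you invoke are in force under the standing assumptions. One small point worth a parenthesis: the $1$-norm is defined as an infimum, so $\|g_{n+1}-g_n\|\le 2^{-n}$ yields $-\lambda\cdot 1\le g_{n+1}-g_n\le\lambda\cdot 1$ only for $\lambda>2^{-n}$ a priori; the bound at $\lambda=2^{-n}$ itself requires archimedeanity (which you do have), or you can sidestep the issue entirely by extracting the subsequence with the strict inequality $\|g_{n+1}-g_n\|<2^{-n}$. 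This does not affect the validity of the argument.
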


\begin{proof}
See \cite[Proposition 3.9]{Hand}.
\end{proof}

\begin{theorem} \label{th:Convergence}
Let $(g\sb{n})\sb{n\in\Nat}$ be a sequence in $G$ and let $g\in G$.
Then:
\begin{enumerate}

\item If $g\sb{n}\rightarrow g$ in the $1$-norm, then for each
 $\omega\in\Omega$ we have $\omega(g\sb{n})\rightarrow\omega(g)$
 in $\reals$.

\item If $g\sb{1}\leq g\sb{2}\leq\cdots$ and $\omega(g\sb{n})
 \rightarrow\omega(g)$ in $\reals$ for each $\omega\in\Omega$,
 then $g$ is the supremum in $G$ of $(g\sb{n})\sb{n\in\Nat}$.

\item If $g\sb{1}\leq g\sb{2}\leq\cdots$ is an ascending sequence
 of pairwise commuting elements in $G$, $g\in G$, and $g\sb{n}
 \rightarrow g\in G$ in the $1$-norm, then $g$ is the supremum in
 $G$ of $(g\sb{n})\sb{n\in\Nat}$ and $g\in CC(\{g\sb{n}: n\in\Nat)\}$.
\end{enumerate}
\end{theorem}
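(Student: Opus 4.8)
The plan is to prove the three parts in order, using the order-determining property of $\Omega$ (Theorem~\ref{th:OrderDetermining}) and the characterization of the $1$-norm via states (Theorem~\ref{th:Norm}~(i)) as the principal tools that transfer statements about order and norm convergence back and forth with pointwise convergence of states.

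For part~(i), I would argue directly from Theorem~\ref{th:Norm}~(i). Fix $\omega\in\Omega$. Since $\omega$ is a state, $|\omega(g\sb{n}-g)|\leq\|g\sb{n}-g\|$ by Theorem~\ref{th:Norm}~(i) (the maximum of $|\omega'(\cdot)|$ over all states dominates the value at the particular state $\omega$). As $\|g\sb{n}-g\|\to 0$ by hypothesis and $\omega$ is linear on $G$ (noted after Definition~\ref{df:state}), we get $\omega(g\sb{n})-\omega(g)=\omega(g\sb{n}-g)\to 0$ in $\reals$, which is the claim.

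For part~(ii), assume $(g\sb{n})$ is ascending with $\omega(g\sb{n})\to\omega(g)$ for every $\omega\in\Omega$. I first show $g$ is an upper bound: for fixed $m$ and every $\omega$, monotonicity of $\omega$ gives $\omega(g\sb{m})\leq\omega(g\sb{n})$ for $n\geq m$, and letting $n\to\infty$ yields $\omega(g\sb{m})\leq\omega(g)$; since this holds for all $\omega\in\Omega$, Theorem~\ref{th:OrderDetermining} gives $g\sb{m}\leq g$. Next, if $k\in G$ is any upper bound, then $\omega(g\sb{n})\leq\omega(k)$ for all $n$ and all $\omega$, so passing to the limit $\omega(g)\leq\omega(k)$ for every $\omega$, whence $g\leq k$ by Theorem~\ref{th:OrderDetermining} again. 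Thus $g$ is the least upper bound, i.e.\ the supremum in $G$.

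For part~(iii), the hypotheses are that $(g\sb{n})$ is an ascending sequence of pairwise commuting elements and $g\sb{n}\to g$ in the $1$-norm. By part~(i), norm convergence gives $\omega(g\sb{n})\to\omega(g)$ for every $\omega\in\Omega$, so part~(ii) applies and $g$ is the supremum of $(g\sb{n})$ in $G$. But $(g\sb{n})$ is exactly an ascending sequence of pairwise commuting elements bounded above (by $g$), so the CV property forces its supremum to lie in $CC(\{g\sb{n}:n\in\Nat\})$; by uniqueness of suprema this supremum is $g$, giving $g\in CC(\{g\sb{n}:n\in\Nat\})$. The only subtle point is to make sure the double-commutant conclusion really comes from CV rather than separately; since CV delivers both the existence of the supremum and its membership in the bicommutant, and we have already identified $g$ as that supremum, the membership is immediate. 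I expect the main obstacle to be a purely bookkeeping one: cleanly justifying the interchange of the limit with the order relations in part~(ii) via the order-determining property, and recognizing that part~(iii) is essentially a repackaging of parts~(i) and~(ii) together with CV, rather than requiring any new estimate.
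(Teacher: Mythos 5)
Your proposal is correct and follows essentially the same route as the paper: part (i) via the state characterization of the $1$-norm (Theorem \ref{th:Norm} (i)) and linearity of states, part (ii) via the order-determining property (Theorem \ref{th:OrderDetermining}) applied to both the upper-bound and least-upper-bound claims, and part (iii) as the combination of (i), (ii), and CV. No gaps.
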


\begin{proof}
(i) Suppose that $g\sb{n}\rightarrow g\in G$ in the $1$-norm and let
$\omega\in\Omega$. Let $\epsilon\in\reals$ with $\epsilon>0$ and
choose $N\in\Nat$ such that, for all $n\in\Nat$, $n\geq N\Rightarrow
\|g\sb{n}-g\|\leq\epsilon$. By Theorem \ref{th:Norm} (i), If $n\geq
N$, then $|\omega(g\sb{n})-\omega(g)|=|\omega(g\sb{n}-g)|
\leq\|g\sb{n}-g\|\leq\epsilon$, whence $\omega(g\sb{n})\rightarrow
\omega(g)$.

\smallskip

\noindent (ii) Assume the hypotheses of (ii) and let $\omega\in
\Omega$. Then $\omega(g\sb{1})\leq\omega(g\sb{2})\leq\cdots$, and
it follows that $\omega(g)$ is the supremum in $\reals$ of the
sequence $(\omega(g\sb{n})\sb{n\in\Nat})$. In particular, for each
$n\in\Nat$, $\omega(g\sb{n})\leq\omega(g)$, and since
$\omega\in\Omega$ is arbitrary, it follows from Theorem
\ref{th:OrderDetermining} that $g\sb{n}\leq g$. To prove that $g$ is
the supremum in $G$ of $(g\sb{n}) \sb{n\in\Nat}$, suppose $h\in G$
and $g\sb{n}\leq h$ for all $n\in\Nat$. Then, for each
$\omega\in\Omega$, we have $\omega(g\sb{n}) \leq\omega(h)$ for
all $n\in\Nat$, whence, $\omega(g)\leq\omega(h)$, and since
$\omega\in\Omega$ is arbitrary, it follows that $g\leq h$.

\smallskip

\noindent (iii) Follows from (i), (ii), and CV.
\end{proof}

\section{Spectral Resolution} 

We maintain Standing Assumption \ref{as:AH-alg} and we denote the
state space of $G$ by $\Omega$.

\begin{definition} \label{df:SpecBds}
If $g\in G$, then the \emph{spectral lower and upper bounds} for $g$
are defined by $L\sb{g} :=sup\{\lambda\in\reals: \lambda\cdot 1 \leq
g\}$ and $U\sb{g} :=inf\{\lambda\in\reals: g\leq \lambda\cdot 1\}$,
respectively.
\end{definition}

\begin{theorem} \label{th:SpecBds}
If $g\in G$, then: {\rm (i)} $-\infty<L\sb{g}\leq U\sb{g}<\infty$.
{\rm (ii)} $\{\omega(g):\omega\in\Omega\}$ is the closed interval
 $[L\sb{g},U\sb{g}]\subseteq\reals$. {\rm (iii)} $\|g\|=max
 \{|L\sb{g}|,|U\sb{g}|\}$. {\rm (iv)} $L\sb{-g}=-U\sb{g}$ and
 $U\sb{-g}=-L\sb{g}$.
\end{theorem}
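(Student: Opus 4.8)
The plan is to derive parts (i) and (ii) simultaneously by analyzing the image set $S := \{\omega(g) : \omega\in\Omega\}$, and then to read off (iii) and (iv) as essentially immediate consequences. First I would record the structural facts about $S$. Since $\Omega$ is a nonempty compact convex subset of $\reals\sp{G}$ and every state is a linear functional on $G$ (by \cite[Lemma 6.7]{Good}), the evaluation map $\omega\mapsto\omega(g)$ is continuous for the topology of pointwise convergence and is affine. Hence $S$ is a nonempty, compact, convex subset of $\reals$, that is, a closed bounded interval $S=[m,M]$ with $m:=\min S$ and $M:=\max S$ finite and $m\leq M$.

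Next I would identify the endpoints of $S$ with the spectral bounds. For the upper endpoint: from $\omega(g)\leq M$ for every $\omega\in\Omega$ together with Theorem \ref{th:OrderDetermining}, I obtain $g\leq M\cdot 1$, so $M$ lies in the set defining $U\sb{g}$ and therefore $U\sb{g}\leq M$. Conversely, for any $\mu\in\reals$ with $g\leq\mu\cdot 1$, applying an arbitrary state gives $\omega(g)\leq\mu$, whence $M\leq\mu$; taking the infimum over all such $\mu$ yields $M\leq U\sb{g}$. Thus $M=U\sb{g}$, and the symmetric argument gives $m=L\sb{g}$. This establishes (i) at once (finiteness of $L\sb{g},U\sb{g}$ and $L\sb{g}=m\leq M=U\sb{g}$) and (ii) ($S=[m,M]=[L\sb{g},U\sb{g}]$); nonemptiness of the two defining sets is subsumed by this argument, though it can also be noted directly from the fact that $1$ is an order unit.

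For (iii) I would combine (ii) with the norm formula $\|g\|=\max\{|\omega(g)|:\omega\in\Omega\}$ of Theorem \ref{th:Norm} (i): since $\{\omega(g):\omega\in\Omega\}=[L\sb{g},U\sb{g}]$, the maximum of $|\cdot|$ over this interval is exactly $\max\{|L\sb{g}|,|U\sb{g}|\}$. Part (iv) is a direct manipulation of the definitions, using $\lambda\cdot 1\leq -g\Leftrightarrow g\leq -\lambda\cdot 1$, so that passing to suprema gives $L\sb{-g}=-U\sb{g}$ and symmetrically $U\sb{-g}=-L\sb{g}$; alternatively one applies (ii) to $-g$ via $\{\omega(-g):\omega\in\Omega\}=-[L\sb{g},U\sb{g}]=[-U\sb{g},-L\sb{g}]$.

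The main obstacle will be the identification $m=L\sb{g}$, $M=U\sb{g}$, and specifically the passage from the pointwise bound $\omega(g)\leq M$ for all $\omega$ to the order relation $g\leq M\cdot 1$. This is precisely where the order-determining property (Theorem \ref{th:OrderDetermining}, itself a consequence of $G$ being archimedean by Theorem \ref{th:1/2+CV} (ii)) is indispensable. The convexity of $\Omega$ is the other essential ingredient, since it is what forces $S$ to be a full closed interval rather than merely a compact set carrying the correct endpoints.
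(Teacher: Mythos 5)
Your proof is correct. It differs from the paper's, which does not argue from scratch but simply defers parts (i)--(iii) to the proof of \cite[Proposition 4.7]{Good}; that argument works for an arbitrary partially ordered abelian group with order unit and obtains the interval $\{\omega(g):\omega\in\Omega\}$ by extending states from the subgroup generated by $1$ and $g$, with endpoints described by rational bounds $m/n$ (which coincide with $L_g,U_g$ only because $G$ is archimedean). You instead exploit machinery the paper has already set up: compactness and convexity of $\Omega$ force $S=\{\omega(g):\omega\in\Omega\}$ to be a closed bounded interval $[m,M]$, and Theorem \ref{th:OrderDetermining} (available precisely because $G$ is archimedean, Theorem \ref{th:1/2+CV} (ii)) identifies $M=U_g$ and $m=L_g$; note that the direction $U_g\le M$ uses order-determination while $M\le U_g$ needs only that states are order-preserving and unital. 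This buys a short, self-contained proof in the present context at the cost of generality; the deductions of (iii) from Theorem \ref{th:Norm} (i) and of (iv) from $\lambda\cdot 1\le -g\Leftrightarrow g\le-\lambda\cdot 1$ are exactly what the paper intends. The one point worth making explicit is that $\omega(\lambda\cdot 1)=\lambda$ for every state $\omega$, which requires the paper's observation (via \cite[Lemma 6.7]{Good}) that states are linear functionals on the real vector space $G$, not merely additive maps.
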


\begin{proof}
Parts (i) and (ii) follow as in the proof of \cite[Proposition
4.7]{Good}, (iii) follows as in the proof of \cite[Proposition
4.7]{Good}, and (iv) is obvious.
\end{proof}

In \cite[Section 4]{FSR}, we proved that each element $g$ in a
Rickart comgroup has a rational spectral resolution
$(p\sb{g,\lambda}) \sb{\lambda\in\rationals}$. Under our current
stronger hypotheses, we can extend the rational spectral resolution
as follows to obtain a real spectral resolution
$(p\sb{g,\lambda})\sb{\lambda \in\reals}$ for each element $g\in G$.

\begin{definition} \label{df:SpecRes}
Let $g\in G$ and $\lambda\in\reals$. We define
\[
p\sb{g,\lambda} :=1-((g-\lambda\cdot 1)\sp{+})\dg\in P \text{\ and\
} d\sb{g,\lambda} :=1-(g-\lambda\cdot 1)\dg\in P.
\]
The family of projections $(p\sb{g,\lambda})\sb{\lambda\in\reals}$
is called the \emph{spectral resolution} for $g$, and for
$\lambda\in\reals$, $d\sb{g,\lambda}$ is called the \emph
{$\lambda$-eigenprojection} for $g$. If $d\sb{g,\lambda}\not=0$,
then $\lambda$ is an \emph{eigenvalue} of $g$. If $g$ is understood,
we write the spectral resolution for $g$ as $(p\sb{\lambda})
\sb{\lambda\in\reals}$ and we write the family of eigenprojections
for $g$ as $(d\sb{\lambda})\sb{\lambda\in\reals}$.
\end{definition}

\begin{assumptions} In what follows, $g\in G$; $L :=L\sb{g}$ and
$U :=U\sb{g}$ are the spectral bounds for $g$; $(p\sb{\lambda})
\sb{\lambda\in\reals}$ is the spectral resolution of $g$; and
$(d\sb{\lambda})\sb{\lambda\in\reals}$ is the family of
eigenprojections for $g$.
\end{assumptions}

\begin{lemma}  \label{lm:SRminusg}
Let $(q\sb{\lambda})\sb{\lambda\in\reals}$ be the spectral
resolution of $-g$ and let $(c\sb{\lambda})\sb{\lambda\in\reals}$ be
the family of eigenprojections for $-g$. Then, for all
$\lambda\in\reals$, {\rm (i)}
$q\sb{\lambda}=(1-p\sb{-\lambda})+d\sb{-\lambda}=(1-p\sb{-\lambda})
\vee d\sb{-\lambda}$ and {\rm (ii)} $c\sb{\lambda}=d\sb{-\lambda}$.
\end{lemma}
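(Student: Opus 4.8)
The plan is to reduce everything to the single element $h := g+\lambda\cdot 1$, exploiting that $-g-\lambda\cdot 1=-h$, and then to translate the defining formulas of Definition \ref{df:SpecRes} for $-g$ into statements about $h^+$, $h^-$, and $h\dg$. Reading off the definitions with this substitution, I get $q_{\lambda}=1-((-h)^+)\dg$ and $c_{\lambda}=1-(-h)\dg$, while on the other side $1-p_{-\lambda}=(h^+)\dg$ and $d_{-\lambda}=1-h\dg$. The entire argument then rests on three facts already available: the behaviour of the carrier under negation, $(-h)\dg=h\dg$; the identity $(-h)^+=h^-$ from Lemma \ref{lm:Prop g+g-} (vii); and the orthogonal decomposition $(h^+)\dg+(h^-)\dg=h\dg$ from Lemma \ref{lm:LemmaA} (v).

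I would dispose of (ii) first. The carrier $h\dg$ is characterized by $h\dg k=0\Leftrightarrow hk=0$ for all $k\in G$; since plainly $(-h)k=0\Leftrightarrow hk=0$, the uniqueness clause of Theorem \ref{th:CarrierProperty} forces $(-h)\dg=h\dg$. Hence $c_{\lambda}=1-(-h)\dg=1-h\dg=d_{-\lambda}$, which is (ii).

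For (i), I would begin with $(-h)^+=h^-$ (Lemma \ref{lm:Prop g+g-} (vii)) to rewrite $q_{\lambda}=1-(h^-)\dg$. Lemma \ref{lm:LemmaA} (v) supplies $(h^+)\dg+(h^-)\dg=h\dg$, so $(h^-)\dg=h\dg-(h^+)\dg$ and therefore
\[
q_{\lambda}=1-\bigl(h\dg-(h^+)\dg\bigr)=(1-h\dg)+(h^+)\dg=d_{-\lambda}+(1-p_{-\lambda}),
\]
which is the first equality of (i). Since the left side $q_{\lambda}=1-(h^-)\dg$ lies in $P$, the sum $(1-p_{-\lambda})+d_{-\lambda}$ is a projection, so Lemma \ref{lm:p-q,p+q} (iii) identifies it with the supremum $(1-p_{-\lambda})\vee d_{-\lambda}$, giving the second equality. (As a consistency check, $(h^+)\dg\leq h\dg$ from Lemma \ref{lm:LemmaA} (v) yields $(h^+)\dg(1-h\dg)=0$ via Lemma \ref{lm:EProperties} (ii), so the two summands are indeed orthogonal, as Lemma \ref{lm:p-q,p+q} (iii) then predicts.)

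I do not anticipate a genuine obstacle here: the proof is pure bookkeeping. The only delicate point is tracking the combined index shift $\lambda\mapsto-\lambda$ and sign reversal $g\mapsto-g$ consistently through the four definitions of $p_{-\lambda}$, $d_{-\lambda}$, $q_{\lambda}$, and $c_{\lambda}$; once $h$ is introduced this is transparent. The one substantive ingredient is invoking Lemma \ref{lm:LemmaA} (v) at the right moment, since it is exactly the orthogonal splitting $h\dg=(h^+)\dg+(h^-)\dg$ that converts the bare identity into the stated join.
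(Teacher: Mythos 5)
Your proof is correct and follows essentially the same route as the paper's: both reduce to the element $g-(-\lambda)\cdot 1$ (your $h$), use $(-h)^{+}=h^{-}$ from Lemma \ref{lm:Prop g+g-} (vii) together with the splitting $(h^{+})\dg+(h^{-})\dg=h\dg$ from Lemma \ref{lm:LemmaA} (v), and invoke Lemma \ref{lm:p-q,p+q} (iii) to convert the sum into the join. Your explicit uniqueness argument for $(-h)\dg=h\dg$ merely spells out what the paper dismisses as clear.
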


\begin{proof}
By Lemma \ref{lm:Prop g+g-} (vii), we have
\[
1-q\sb{\lambda}=((-g-\lambda\cdot 1)\sp{+})\dg=((-(g-(-\lambda)1))
 \sp{+})\dg=((g-(-\lambda)1)\sp{-})\dg.
\]
Thus, by Lemma \ref{lm:LemmaA} (v),
\[
(1-p\sb{-\lambda})+(1-q\sb{\lambda})=((g-(-\lambda)1)\sp{+})
 \dg+((g-(-\lambda)1)\sp{-})\dg
\]
\[
=(g-(-\lambda)1)\dg=1-d\sb{-\lambda},
\]
whence, by Lemma \ref{lm:p-q,p+q} (iii), $q\sb{\lambda}=
(1-p\sb{-\lambda})+d\sb{-\lambda}=(1-p\sb{-\lambda})\vee
d\sb{-\lambda}$, proving (i).  Finally, it is clear that
$(-h)\dg=h\dg$ for all $h\in G$, so
\[
c\sb{\lambda}=1-(-g-\lambda\cdot 1)\dg=1-(-(g-(-\lambda)1))\dg
=1-(g-(-\lambda)1)\dg=d\sb{-\lambda}.
\]
\end{proof}

\begin{theorem}  \label{th:SpecProps}
For all $\lambda,\mu\in\reals$:
\begin{enumerate}
\item $p\sb{\lambda},\, d\sb{\lambda}\in P\cap CC(g)$ and
 $d\sb{\lambda}Cp\sb{\lambda}$.

\item $p\sb{\lambda}g-\lambda p\sb{\lambda}\leq 0\leq(1-p\sb{\lambda})g-
 \lambda(1-p\sb{\lambda})$.

\item $\lambda\leq\mu\Rightarrow p\sb{\lambda}\leq p\sb{\mu}$ and
 $p\sb{\mu}-p\sb{\lambda}=p\sb{\mu}\wedge(1-p\sb{\lambda})$.

\item $\lambda<\mu\Rightarrow d\sb{\lambda}\leq p\sb{\lambda}\leq
 1-d\sb{\mu}$.

\item $\lambda>U\Rightarrow p\sb{\lambda}=1$, and $\lambda<U\Rightarrow
 p\sb{\lambda}<1$.

\item $\lambda<L\Rightarrow p\sb{\lambda}=0$, and $L<\lambda\Rightarrow
 0<p\sb{\lambda}$.

\item $L=\sup\{\lambda\in\reals: p\sb{\lambda}=0\}$, and $U=\inf
 \{\lambda\in\reals: p\sb{\lambda}=1\}$.

\item If $\lambda\leq \mu$ and $q\in P$ with $q\leq p\sb{\mu}-p
 \sb{\lambda}$, then $\lambda q\leq qgq\leq \mu q$.
\end{enumerate}
\end{theorem}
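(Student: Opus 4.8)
The plan is to prove the eight assertions essentially in the stated order, since each relies on the carrier-projection machinery (Definition~\ref{df:CarrierProj}, Theorem~\ref{th:CarrierProperty}, Lemma~\ref{lm:CarrierLemma}), the comparability data in $P^{\pm}(g)$ (Definition~\ref{df:Comparability}, Theorem~\ref{th:Comparability}), and Lemma~\ref{lm:LemmaA}. Recall that by definition $p_{\lambda}=1-((g-\lambda\cdot 1)^{+})\dg$ and $d_{\lambda}=1-(g-\lambda\cdot 1)\dg$.

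For (i), I would note $(g-\lambda\cdot 1)\in CC(g)$ (since $1\in CC(g)$ trivially), so both $((g-\lambda\cdot1)^{+})\dg$ and $(g-\lambda\cdot1)\dg$ lie in $CC(g-\lambda\cdot1)=CC(g)$ by Lemma~\ref{lm:Prop g+g-}~(ix) and Theorem~\ref{th:CarrierProperty}; hence $p_{\lambda},d_{\lambda}\in P\cap CC(g)$, and $d_{\lambda}Cp_{\lambda}$ because both sit in the commutative family $CC(g)$. For (ii), set $h:=g-\lambda\cdot 1$ and $p:=1-p_{\lambda}=(h^{+})\dg\in P^{\pm}(h)$ by Theorem~\ref{th:Comparability}; then the defining inequalities $(1-p)h\leq 0\leq ph$ of $P^{\pm}(h)$ are exactly $p_{\lambda}h\leq 0\leq (1-p_{\lambda})h$, which unwinds to the claimed inequalities once $h$ is rewritten. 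For (iii), I would use that $\lambda\leq\mu$ gives $g-\mu\cdot 1\leq g-\lambda\cdot 1$ with both in $CPC$ of each other (they lie in $CC(g)$), apply Lemma~\ref{lm:LemmaA}~(iii) to get $((g-\mu\cdot1)^{+})\dg\leq((g-\lambda\cdot1)^{+})\dg$ hence $p_{\lambda}\leq p_{\mu}$, and then invoke Lemma~\ref{lm:p-q,p+q}~(ii) for the meet identity $p_{\mu}-p_{\lambda}=p_{\mu}\wedge(1-p_{\lambda})$. Part (iv) requires showing $d_{\lambda}\leq p_{\lambda}$ and $p_{\lambda}\leq 1-d_{\mu}$ for $\lambda<\mu$; here I would use Lemma~\ref{lm:LemmaA}~(v), which gives $((h)^{+})\dg\vee((h)^{-})\dg=h\dg$, together with comparisons of eigenprojections at $\lambda$ and $\mu$ via the strict inequality $\lambda<\mu$ to separate the $\lambda$-eigenspace from the $\mu$-eigenspace.

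Parts (v), (vi), (vii) are spectral-bound computations: for $\lambda>U$ we have $g\leq\lambda\cdot1$, so $(g-\lambda\cdot1)^{+}=0$, giving $p_{\lambda}=1$, and conversely $\lambda<U$ forces $g\not\leq\lambda\cdot1$ so $(g-\lambda\cdot1)^{+}\neq 0$ and $p_{\lambda}<1$ by Lemma~\ref{lm:LemmaA}~(iv); the analogous argument using $L_{-g}=-U_{g}$ (Theorem~\ref{th:SpecBds}~(iv)) together with Lemma~\ref{lm:SRminusg} handles (vi), and (vii) is then immediate from (v) and (vi) by taking suprema and infima. Part (viii) is the substantive one and I expect it to be the main obstacle: given $q\leq p_{\mu}-p_{\lambda}$, I must show $\lambda q\leq qgq\leq\mu q$. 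The strategy is to compress inequality (ii) by $q$: from $q\leq p_{\mu}$ we get, after multiplying $p_{\mu}g-\mu p_{\mu}\leq 0$ on both sides by $q$ and using that $q$ commutes with $p_{\mu}$ and with $g$ (both in $CC(g)$, and $q\leq p_{\mu}-p_{\lambda}\in CC(g)$ forces $qCg$ via Lemma~\ref{lm:CarrierLemma}), the bound $qgq\leq\mu q$; symmetrically, from $q\leq 1-p_{\lambda}$ and the lower inequality in (ii) we extract $\lambda q\leq qgq$. The delicate points will be justifying the commutation relations needed to pass products through the compression $x\mapsto qxq$ and confirming that $qp_{\mu}=q=q(1-p_{\lambda})$ so that the cross terms collapse cleanly; Lemma~\ref{lm:EProperties}~(iii) and Lemma~\ref{lm:1/2Lemma}~(vi) are the tools I would lean on to preserve the inequalities under this compression.
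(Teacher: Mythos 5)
Your route matches the paper's for parts (i)--(iii), (v), (vii), and the overall shape of (viii), but there is a genuine gap at the heart of part (iv). The inequality $d_{\lambda}\leq p_{\lambda}$ does follow from Lemma \ref{lm:LemmaA} (v) as you say, but the second claim, $p_{\lambda}\leq 1-d_{\mu}$ for $\lambda<\mu$, is not obtained by any ``comparison of eigenprojections to separate eigenspaces''; it requires an actual computation showing $d_{\mu}p_{\lambda}=0$. The paper's argument is: $d_{\mu}\in CC(g)$ commutes with $g$ and with $p_{\lambda}\in C(g)$; from (ii) one has $gp_{\lambda}=p_{\lambda}g\leq\lambda p_{\lambda}$, so multiplying by $0\leq d_{\mu}$ (Lemma \ref{lm:1/2Lemma} (vi)) gives $d_{\mu}gp_{\lambda}\leq\lambda d_{\mu}p_{\lambda}$; on the other hand $(g-\mu\cdot 1)d_{\mu}=0$ gives $gd_{\mu}=\mu d_{\mu}$, whence $\mu d_{\mu}p_{\lambda}\leq\lambda d_{\mu}p_{\lambda}\leq\mu d_{\mu}p_{\lambda}$, so $(\mu-\lambda)d_{\mu}p_{\lambda}=0$ and $d_{\mu}p_{\lambda}=0$. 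Without this step (or an equivalent) part (iv) is merely asserted, and (iv) is later needed for Theorem \ref{th:RightContinuity}, so the gap matters.

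Two smaller problems. In (vi), reducing to (v) via $-g$ and Lemma \ref{lm:SRminusg} only yields $q_{-\lambda}=1$, i.e.\ $(1-p_{\lambda})+d_{\lambda}=1$, i.e.\ $p_{\lambda}=d_{\lambda}$ --- not $p_{\lambda}=0$; you would still have to show $d_{\lambda}=0$ for $\lambda<L$, which amounts to the direct argument the paper gives anyway, namely $(g-\lambda\cdot 1)^{+}=g-\lambda\cdot 1\geq(\mu-\lambda)\cdot 1$ for some $\mu>\lambda$ together with Lemma \ref{lm:LemmaA} (ii) to get carrier $1$. In (viii), your assertion that $q\leq p_{\mu}-p_{\lambda}\in CC(g)$ ``forces $qCg$'' is false: a subprojection of a projection commuting with $g$ need not commute with $g$ (already visible in $\GH$). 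Fortunately the conclusion concerns $qgq$, not $qg$, and no such commutation is needed: one simply compresses the two inequalities of (ii) by $q$ using Lemma \ref{lm:EProperties} (iii) together with $qp_{\mu}=p_{\mu}q=q$ and $q(1-p_{\lambda})=(1-p_{\lambda})q=q$, which is exactly the fallback you name. So (viii) survives once the spurious commutation claim is dropped, but (iv) and the forward half of (vi) need the missing computations supplied.
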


\begin{proof}
(i) Clearly, $C(g-\lambda\cdot 1)=C(g)$ and $CC(g-\lambda\cdot 1)
=CC(g)$, whence $p\sb{\lambda},\,d\sb{\lambda}\in P\cap CC(g)$ by
Lemma \ref{lm:Prop g+g-} (ix) and Theorem \ref{th:CarrierProperty}.

\smallskip

\noindent (ii) By Theorem \ref{th:Comparability}, $1-p\sb{\lambda}
=((g-\lambda\cdot 1)\sp{+})\dg\in P\sp{\pm}(g-\lambda\cdot 1)$, and
(ii) then follows from the definition of $P\sp{\pm}(g- \lambda\cdot
1)$

\smallskip

\noindent (iii) Assume that $\lambda\leq\mu$. Then $g-\mu\cdot1 \leq
g-\lambda\cdot1$, and $g-\mu\cdot1\in CC(g-\lambda\cdot1)$; hence
$p\sb{\lambda}\leq p\sb{\mu}$ follows from Lemma \ref{lm:LemmaA}
(iii). Thus, $p\sb{\mu}-p\sb{\lambda}=
p\sb{\mu}\wedge(1-p\sb{\lambda})$ by Lemma \ref{lm:p-q,p+q} (ii).

\smallskip

\noindent (iv) By Lemma \ref{lm:LemmaA} (v), we have
$1-p\sb{\lambda}=
 ((g-\lambda\cdot1)\sp{+})\dg\leq(g-\lambda\cdot 1)\dg=1-d\sb
{\lambda}$, whence $d\sb{\lambda}\leq p\sb{\lambda}$. Assume that
$\lambda<\mu$. By (i), $d\sb{\mu}\in CC(g)$ and $p\sb{\lambda}\in
C(g)$, so $d\sb{\mu}Cp\sb{\lambda}$. By (ii),
$gp\sb{\lambda}=p\sb{\lambda} g\leq\lambda p\sb\lambda$, and as the
projection $d\sb{\mu}$ commutes with both $g$ and $p\sb{\lambda}$,
Lemma \ref{lm:1/2Lemma} (vi) implies that
\[
d\sb{\mu}g p\sb{\lambda}\leq d\sb{\mu}(\lambda p\sb{\lambda})
 =\lambda d\sb{\mu}p\sb{\lambda}.
\]
As $d\sb{\mu}=1-(g-\mu\cdot1)\dg$, we have $(g-\mu\cdot1)d
\sb{\mu}=0$, i.e., $\mu d\sb{\mu}=g d\sb{\mu}=d\sb{\mu}g$.
Therefore,
\[
 \mu d\sb{\mu}p\sb{\lambda}=d\sb{\mu}g p\sb{\lambda}\leq
 \lambda d\sb{\mu}p\sb{\lambda}\leq\mu d\sb{\mu}p\sb{\lambda},
\]
whence $\mu d\sb{\mu}p\sb{\lambda}=\lambda d\sb{\mu}p\sb{\lambda}$,
i.e., $(\mu-\lambda)d\sb{\mu}p\sb{\lambda}=0$. But, $\mu-\lambda>
0$, so $d\sb{\mu}p\sb{\lambda}=0$, and it follows that
$p\sb{\lambda} \leq 1-d\sb{\mu}$.

\smallskip

\noindent (v) If $\lambda>U$, there exists $\mu\in\reals$ such that
$\mu<\lambda$ and $g\leq \mu\cdot 1\leq\lambda\cdot 1$, whereupon
$g-\lambda\cdot 1\leq 0$, i.e., $(g-\lambda\cdot 1)\sp{+}=0$, so
$((g-\lambda\cdot 1)\sp{+})\dg=0$, and it follows that
$p\sb{\lambda}=1$. Conversely, if $p\sb{\lambda}=1$, then
$((g-\lambda\cdot 1)\sp{+})\dg=0$, so $(g-\lambda\cdot 1)\sp{+} =0$,
whence $g-\lambda\cdot 1\leq 0$, and it follows that $U\leq\lambda$;
consequently, $\lambda<U\Rightarrow p\sb{\lambda} <1$.

\smallskip

\noindent (vi) Suppose $\lambda < L$. Then there exists $\mu\in
\reals$ such that $\lambda<\mu$ and $\mu\cdot 1\leq g$. Therefore,
$1\leq(\mu-\lambda)1=\mu\cdot 1-\lambda\cdot 1\leq g-\lambda\cdot 1
=(g-\lambda\cdot 1)\sp{+}$, and it follows from Lemma
\ref{lm:LemmaA} (ii) that $1=1\dg\leq((g-\lambda\cdot
1)\sp{+})\dg=1-p\sb{\lambda}$, whence $p\sb{\lambda}=0$. Conversely,
if $p\sb{\lambda}=0$, then $((g-\lambda\cdot 1)\sp{+})\dg=1$, whence
$0\leq (g-\lambda\cdot 1)$, i.e., $\lambda\cdot 1\leq g$, by Lemma
\ref{lm:LemmaA} (iv), whereupon $\lambda\leq L$; consequently,
$L<\lambda\Rightarrow 0<p\sb{\lambda}$.

\smallskip

\noindent (vii) Follows directly from (v) and (vi).

\smallskip

\noindent (viii) Assume the hypotheses. By (iii), $q\leq p\sb{\mu}$
and $q\leq 1-p\sb{\lambda}$; hence $q=qp\sb{\mu}=p\sb{\mu}q$ and
$q=q(1-p\sb{\lambda})=(1-p\sb{\lambda})q$ by Lemma \ref{lm:p-q,p+q}
(i). Also, by (ii),
\[
\lambda(1-p\sb{\lambda})\leq(1-p\sb{\lambda})g\text{\ \ and\ \ }
 p\sb{\mu}g\leq\mu p\sb{\mu}\,{\rm ;}
\]
hence, by Lemma \ref{lm:EProperties} (iii),
\[
\lambda q=q\lambda(1-p\sb{\lambda})q\leq
q(1-p\sb{\lambda})gq=qgq\text{\ \ and}
\]
\[
qgq=qp\sb{\mu}gq\leq q\mu p\sb{\mu}q=\mu q.
\]
Consequently, $\lambda q\leq qgq\leq\mu q$.
\end{proof}

\begin{theorem}  \label{th:SpecTh}
Suppose that
$\lambda\sb{0},\lambda\sb{1},...,\lambda\sb{n}\in\reals$ with
\[
\lambda\sb{0}<L<\lambda\sb{1}<\cdots<\lambda\sb{n-1}<U<\lambda
\sb{n}
\]
and let $\gamma\sb{i}\in\reals$ with $\lambda\sb{i-1}\leq
\gamma\sb{i}\leq\lambda\sb{i}$ for $i=1,2,...,n$.  Define $u\sb{i}
:=p\sb{\lambda\sb{i}}-p\sb{\lambda\sb{i-1}}$ for $i=1,2,...,n$, and
let $\epsilon :=\max\{\lambda\sb{i}-\lambda\sb{i-1}: i=1,2,...,n\}$.
Then:
\[
 u\sb{1},u\sb{2},...,u\sb{n}\in P\cap CC(g),\ \
 \sum\sb{i=1}\sp{n}u\sb{i}=1,\text{\ and\ }\|g-\sum\sb{i=1}\sp{n}
 \gamma\sb{i}u\sb{i}\|\leq\epsilon.
\]
\end{theorem}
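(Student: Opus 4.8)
The plan is to verify the three assertions in turn, drawing on the spectral properties already catalogued in Theorem~\ref{th:SpecProps}. The first claim, that each $u\sb{i}\in P\cap CC(g)$, and the telescoping identity $\sum\sb{i=1}\sp{n}u\sb{i}=1$, should be essentially formal. The norm estimate $\|g-\sum\sb{i=1}\sp{n}\gamma\sb{i}u\sb{i}\|\leq\epsilon$ is the substantive part; I expect it to follow by sandwiching $g$ between $\sum\gamma\sb{i}u\sb{i}\pm\epsilon\cdot 1$ using the localized estimate in Theorem~\ref{th:SpecProps}~(viii).

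\textbf{Projections and telescoping.} First I would note that by Theorem~\ref{th:SpecProps}~(iii), $\lambda\sb{i-1}<\lambda\sb{i}$ gives $p\sb{\lambda\sb{i-1}}\leq p\sb{\lambda\sb{i}}$, so by Lemma~\ref{lm:p-q,p+q}~(i) each difference $u\sb{i}=p\sb{\lambda\sb{i}}-p\sb{\lambda\sb{i-1}}$ is a projection. Since each $p\sb{\lambda}\in CC(g)$ by Theorem~\ref{th:SpecProps}~(i) and $CC(g)$ is closed under subtraction (it is a subgroup), $u\sb{i}\in P\cap CC(g)$. For the telescoping sum, the choice $\lambda\sb{0}<L$ gives $p\sb{\lambda\sb{0}}=0$ by Theorem~\ref{th:SpecProps}~(vi), and $\lambda\sb{n}>U$ gives $p\sb{\lambda\sb{n}}=1$ by Theorem~\ref{th:SpecProps}~(v); hence $\sum\sb{i=1}\sp{n}u\sb{i}=p\sb{\lambda\sb{n}}-p\sb{\lambda\sb{0}}=1$.

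\textbf{The norm estimate.} This is where the real work lies. Writing $a:=\sum\sb{i=1}\sp{n}\gamma\sb{i}u\sb{i}$, I want to bound $g-a$ above and below by $\pm\epsilon\cdot 1$ and then invoke Theorem~\ref{th:Norm}~(ii) or the characterization of the $1$-norm directly. The key is Theorem~\ref{th:SpecProps}~(viii): applying it with the pair $\lambda\sb{i-1}\leq\lambda\sb{i}$ and $q=u\sb{i}\leq p\sb{\lambda\sb{i}}-p\sb{\lambda\sb{i-1}}$ yields $\lambda\sb{i-1}u\sb{i}\leq u\sb{i}gu\sb{i}\leq\lambda\sb{i}u\sb{i}$. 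The decisive observation is that because the $u\sb{i}$ are pairwise orthogonal projections summing to $1$ and all lie in $CC(g)$ (so they commute with $g$), one has $u\sb{i}gu\sb{i}=gu\sb{i}$ and $g=\sum\sb{i=1}\sp{n}gu\sb{i}=\sum\sb{i=1}\sp{n}u\sb{i}gu\sb{i}$. Combining with $\lambda\sb{i-1}\leq\gamma\sb{i}\leq\lambda\sb{i}$ and $\lambda\sb{i}-\lambda\sb{i-1}\leq\epsilon$, I obtain componentwise $-\epsilon u\sb{i}\leq u\sb{i}gu\sb{i}-\gamma\sb{i}u\sb{i}\leq\epsilon u\sb{i}$, and summing over $i$ (using $\sum u\sb{i}=1$) gives $-\epsilon\cdot 1\leq g-a\leq\epsilon\cdot 1$. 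The norm bound $\|g-a\|\leq\epsilon$ then follows immediately from the definition of the $1$-norm in Definition~\ref{df:Norm}.

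\textbf{Main obstacle.} The delicate point is justifying that summing the orthogonal ``block'' inequalities $-\epsilon u\sb{i}\leq u\sb{i}gu\sb{i}-\gamma\sb{i}u\sb{i}\leq\epsilon u\sb{i}$ over $i$ legitimately reconstitutes $g$ on the left-hand side, i.e.\ that $\sum\sb{i=1}\sp{n}u\sb{i}gu\sb{i}=g$. This rests on the pairwise orthogonality $u\sb{i}u\sb{j}=0$ for $i\neq j$ (from Lemma~\ref{lm:p-q,p+q}~(iii), since the $u\sb{i}$ are projections whose sum is a projection) together with $gCu\sb{i}$, which collapses the cross terms. Once that bookkeeping is secured, the estimate is a direct addition of inequalities in the ordered group, and no analytic limiting argument is needed.
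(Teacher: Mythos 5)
Your proposal is correct and follows essentially the same route as the paper: both establish $u\sb{i}\in P\cap CC(g)$ and the telescoping identity from Theorem \ref{th:SpecProps} (i), (iii), (v), (vi), and both obtain the norm bound by sandwiching $g-\sum\gamma\sb{i}u\sb{i}$ between $\pm\epsilon\cdot 1$ via Theorem \ref{th:SpecProps} (viii) applied to each $u\sb{i}$ (using $u\sb{i}gu\sb{i}=gu\sb{i}$ since $u\sb{i}Cg$). The only cosmetic difference is that you sum the per-block inequalities and appeal to the definition of the $1$-norm directly, while the paper first forms the global sandwich $\sum\lambda\sb{i-1}u\sb{i}\leq g\leq\sum\lambda\sb{i}u\sb{i}$ and then invokes Theorem \ref{th:Norm} (ii) and (v); also, the "main obstacle" you flag is in fact immediate from $gCu\sb{i}$ and $\sum u\sb{i}=1$ alone, without needing the orthogonality of the $u\sb{i}$.
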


\begin{proof}
In the proof, we understand that $i=1,2,...,n$ and that all sums are
from $i=1$ to $i=n$. By parts (i) and (iii) of Theorem
\ref{th:SpecProps}, we have $p\sb{\lambda\sb{i-1}}\leq
p\sb{\lambda\sb{i}}$ with $p\sb {\lambda\sb{i-1}},\,
p\sb{\lambda\sb{i}}\in P\cap CC(g)$, whence $u\sb{i}\in P\cap
CC(g)$. That $\sum u\sb{i}=1$ follows from parts (v) and (vi) of
Theorem \ref{th:SpecProps}. Since $u\sb{i}\in C(g)$, Theorem
\ref{th:SpecProps} (viii) implies that $\lambda\sb{i-1}u\sb{i}\leq
u\sb{i}g\leq\lambda\sb{i}u\sb{i}$ and, adding these inequalities, we
find that $\sum\lambda\sb{i-1}u\sb{i}\leq\sum u\sb{i}g=1\cdot g=
g\leq\sum\lambda\sb{i}u\sb{i}$. The latter inequalities together
with $\lambda\sb{i-1}\leq\gamma\sb{i}\leq\lambda\sb{i}$ and $0\leq
u\sb{i}$ imply that
\[
-\sum(\lambda\sb{i}-\lambda\sb{i-1})u\sb{i}\leq-\sum(\gamma\sb{i}
 -\lambda\sb{i-1})u\sb{i}\leq g-\sum\gamma\sb{i}u\sb{i}
\]
\[
 \leq\sum(\lambda\sb{i}-\gamma\sb{i})u\sb{i}\leq\sum(\lambda\sb{i}
 -\lambda\sb{i-1})u\sb{i},
\]
whence
\[
 \|g-\sum\gamma\sb{i}u\sb{i}\|\leq\|\sum(\lambda\sb{i}
 -\lambda\sb{i-1})u\sb{i}\|\leq \epsilon\|\sum u\sb{i}\|=
 \epsilon\cdot1=\epsilon
\]
parts (ii) and (v) of Theorem \ref{th:Norm} and part (iv) of Theorem
\ref{th:Norm} with $p=1$.
\end{proof}

\begin{theorem} \label{th:SPComutativity}
If $h\in G$, then $hCg\Leftrightarrow hCp\sb{\lambda}$ for all
$\lambda\in\reals$.
\end{theorem}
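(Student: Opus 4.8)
The forward implication I would handle purely through the bicommutant. Suppose $hCg$, so that $h\in C(g)$. By Theorem \ref{th:SpecProps} (i) we have $p_\lambda\in CC(g)$ for every $\lambda\in\reals$, and since every element of $CC(g)=C(C(g))$ commutes with every element of $C(g)$ while $h\in C(g)$, it follows at once that $hCp_\lambda$. Thus $hCg\Rightarrow hCp_\lambda$ for all $\lambda$, with no analytic input required.

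For the converse I assume that $h$ commutes with $p_\lambda$ for every $\lambda\in\reals$ and aim to show $hCg$. The plan is to realize $g$ as a bicommutant-supremum of an ascending sequence of spectral lower sums, each of which manifestly commutes with $h$, and then invoke Theorem \ref{th:Convergence} (iii). Concretely, I would choose a nested sequence of partitions $\Pi_1\subseteq\Pi_2\subseteq\cdots$ of the form $\lambda_0<L<\lambda_1<\cdots<U<\lambda_{k_n}$ as permitted by Theorem \ref{th:SpecTh}, with mesh $\epsilon_n\to 0$, and set $g_n:=\sum_i\lambda_{i-1}^{(n)}\bigl(p_{\lambda_i^{(n)}}-p_{\lambda_{i-1}^{(n)}}\bigr)$, the lower sum over $\Pi_n$ (Theorem \ref{th:SpecTh} with $\gamma_i=\lambda_{i-1}$). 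That theorem already gives $g_n\le g$ and $\|g-g_n\|\le\epsilon_n\to 0$, so $g_n\to g$ in the $1$-norm and the sequence is bounded above by $g$.

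Three facts then make $(g_n)$ fit the hypotheses of Theorem \ref{th:Convergence} (iii). First, the family $(p_\lambda)_{\lambda\in\reals}$ is totally ordered by Theorem \ref{th:SpecProps} (iii), hence pairwise commuting by Lemma \ref{lm:EProperties} (ii); since each $g_n$ is a real-linear combination of these projections, the $g_n$ commute pairwise. Second, $(g_n)$ is ascending: passing from $\Pi_n$ to a one-point refinement by inserting $\mu\in(\lambda_{j-1},\lambda_j)$ replaces the single summand $\lambda_{j-1}(p_{\lambda_j}-p_{\lambda_{j-1}})$ by $\lambda_{j-1}(p_\mu-p_{\lambda_{j-1}})+\mu(p_{\lambda_j}-p_\mu)$, and the increment equals $(\mu-\lambda_{j-1})(p_{\lambda_j}-p_\mu)\ge 0$ because $\mu>\lambda_{j-1}$ and $p_\mu\le p_{\lambda_j}$; iterating yields $g_n\le g_{n+1}$. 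Third, $g_n\to g$ in $1$-norm as noted. Theorem \ref{th:Convergence} (iii) then delivers exactly what I want: $g\in CC(\{g_n:n\in\Nat\})$.

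Finally, since $h$ commutes with every $p_\lambda$, it commutes with each difference $p_{\lambda_i}-p_{\lambda_{i-1}}$ and hence with each $g_n$, so $h\in C(\{g_n:n\in\Nat\})$. As $g\in CC(\{g_n:n\})=C(C(\{g_n:n\}))$ commutes with every member of $C(\{g_n:n\})$, I conclude $gh=hg$, i.e. $hCg$. I expect the one delicate point to be the verification that the lower sums can be arranged to be \emph{ascending} (the refinement-monotonicity computation above), since Theorem \ref{th:Convergence} (iii) is stated only for ascending sequences; everything else is routine. It is worth emphasizing that this route deliberately avoids estimating the commutator $gh-hg$ directly — problematic because such a commutator need not lie in $G$ — by loading all the analytic content onto Theorem \ref{th:Convergence} (iii) and the bicommutant guarantee built into the CV property.
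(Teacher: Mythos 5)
Your proposal is correct and follows essentially the same route as the paper: the forward direction via $p\sb{\lambda}\in CC(g)$ from Theorem \ref{th:SpecProps} (i), and the converse by taking a nested sequence of partitions with mesh tending to zero, forming the ascending, pairwise commuting lower sums $g(\Lambda\sb{n})\to g$ in the $1$-norm, and invoking Theorem \ref{th:Convergence} (iii) to place $g$ in $CC(\{g(\Lambda\sb{n})\})$. Your explicit refinement-monotonicity computation fills in a step the paper dismisses as ``clear,'' which is a welcome addition.
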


\begin{proof}
If $hCg$ and $\lambda\in\reals$, then $hCp\sb{\lambda}$ by Theorem
\ref{th:SpecProps} (i). Conversely, suppose that $hCp\sb{\lambda}$
for all $\lambda\in\reals$. Choose and fix $\alpha,\beta\in\reals$
with $\alpha<L$ and $\beta>U$. As usual, a partition of the closed
interval $[\alpha,\beta]\subseteq\reals$ is understood to be a
finite sequence
$\Lambda=(\lambda\sb{i})\sb{i=0,1,2,...,n}\subseteq[\alpha, \beta]$
such that $\alpha=\lambda\sb{0}<\lambda\sb{1}<\cdots\lambda
\sb{n-1}<\lambda\sb{n}=\beta$. The closed interval
$[\lambda\sb{i-1}, \lambda\sb{i}]$ is called the $i$th subinterval
of $\Lambda$ for $i=1,2,...,n$, and we define $\epsilon(\Lambda)
:=\max\{\lambda \sb{i}-\lambda\sb{i-1}: i=1,2,...,n\}$. For the
partition $\Lambda$, we also define $g(\Lambda)
:=\sum\sb{i=1}\sp{n}\lambda\sb{i-1}(p\sb
{\lambda\sb{i}}-p\sb{\lambda\sb{i-1}})$, and we have $\|g-g(\Lambda)
\|\leq \epsilon(\Lambda)$ by Theorem \ref{th:SpecTh}. As $hCp
\sb{\lambda}$ for all $\lambda\in\reals$, we have $hCg(\Lambda)$.

By recursion, we define a sequence $(\Lambda\sb{n})\sb{n\in\Nat}$ of
partitions of $[\alpha,\beta]$ as follows: $\Lambda\sb{1}$ is the
partition $\alpha=\lambda\sb{0}<\lambda\sb{1}=\beta$ having only one
subinterval, namely $[\alpha,\beta]$ itself. From each partition
$\Lambda\sb{n}$, we form the refined partition $\Lambda\sb{n+1}$,
with twice as many subintervals as $\Lambda\sb{n}$, by appending to
the partition $\Lambda\sb{n}$ the midpoints of all its subintervals.
It is clear that $g(\Lambda\sb{1})\leq g(\Lambda \sb{2})\leq\cdots$
and that $g(\Lambda\sb{i})Cg(\Lambda\sb{j})$ for all $i,j\in\Nat$.
Obviously, $\epsilon(\Lambda\sb{n})=(\beta- \alpha)/2\sp{n-1}$,
whence by Theorem \ref{th:SpecTh}, $g(\Lambda\sb{n}) \rightarrow g$
in the $1$-norm $\|\cdot\|$. Therefore, by Theorem
\ref{th:Convergence} (iii), $g$ is the supremum of the ascending
sequence $(g(\Lambda\sb{n}))\sb{n\in\Nat}$ and $g\in
CC(\{g(\Lambda\sb{n}): n\in\Nat\})$; hence $gCh$.
\end{proof}

\begin{corollary} \label{co:SPCommutativity}
Let $g,h\in G$ and let $A\subseteq G$. Then: {\rm (i)} $gCh$ iff
every projection in the spectral resolution of $g$ commutes with
every projection in the spectral resolution of $h$. {\rm (ii)}
$C(C(A)\cap P)=CC(A)$. {\rm (iii)} $CPC(g)=CC(g)$.
\end{corollary}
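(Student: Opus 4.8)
The plan is to obtain all three parts as consequences of Theorem \ref{th:SPComutativity}, which characterizes commutation with a fixed element $k$ in terms of commutation with each projection in the spectral resolution of $k$. The only fact I will invoke repeatedly is that, because the relation $C$ is symmetric ($ak=ka \Leftrightarrow ka=ak$), for any $a,k\in G$ we have $aCk$ if and only if $a$ commutes with every spectral projection of $k$. The logical skeleton is: establish a single ``key fact'' about spectral projections, deduce (ii) from it, note that (iii) is the special case $A=\{g\}$, and prove (i) by a separate double application of the theorem.

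First I would record the key fact underlying (ii) and (iii): \emph{if $k\in C(A)$ and $(p_\lambda)_{\lambda\in\reals}$ is the spectral resolution of $k$, then every $p_\lambda$ lies in $C(A)\cap P$.} Indeed, for each $a\in A$ we have $aCk$, so Theorem \ref{th:SPComutativity} (applied with $k$ as the fixed element) gives $aCp_\lambda$ for all $\lambda$; since $a\in A$ is arbitrary, $p_\lambda\in C(A)$, and $p_\lambda\in P$ by construction. For part (ii), the inclusion $CC(A)\subseteq C(C(A)\cap P)$ is immediate from the order-reversing behaviour of the commutant applied to $C(A)\cap P\subseteq C(A)$. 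For the reverse inclusion, I would take $h\in C(C(A)\cap P)$ and an arbitrary $k\in C(A)$ and show $hCk$: by the key fact every spectral projection $p_\lambda$ of $k$ belongs to $C(A)\cap P$, so $h$ commutes with each $p_\lambda$, and then Theorem \ref{th:SPComutativity} (with $k$ fixed) yields $hCk$. As $k\in C(A)$ is arbitrary, $h\in C(C(A))=CC(A)$. Part (iii) then follows at once, since $CPC(g)=C(P\cap C(g))=C(C(\{g\})\cap P)$ and $CC(g)=CC(\{g\})$, so (iii) is exactly (ii) with $A=\{g\}$.

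For part (i), write $(p_\lambda)$ and $(q_\mu)$ for the spectral resolutions of $g$ and $h$, and argue by a double application of Theorem \ref{th:SPComutativity}. If $gCh$, then applying the theorem with $h$ fixed gives $gCq_\mu$ for every $\mu$; fixing $\mu$ and applying the theorem with $g$ fixed converts $q_\mu Cg$ into $q_\mu Cp_\lambda$ for every $\lambda$, so all the spectral projections commute. Conversely, if $p_\lambda Cq_\mu$ for all $\lambda,\mu$, then for each fixed $\mu$ the theorem (with $g$ fixed) upgrades $q_\mu Cp_\lambda$ for all $\lambda$ to $q_\mu Cg$, and then the theorem (with $h$ fixed) upgrades $gCq_\mu$ for all $\mu$ to $gCh$.

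I do not expect a genuine obstacle here: the substance of the corollary is entirely carried by Theorem \ref{th:SPComutativity}, and what remains is bookkeeping. The one point requiring real care is the key fact of the second paragraph—that the spectral projections of any element of $C(A)$ remain inside $C(A)$—since it is what lets a commutation hypothesis phrased only in terms of \emph{projections} be promoted to full bicommutant membership. Beyond that, the only discipline needed is to keep straight, at each invocation of the theorem, which element is playing the role of the fixed element whose spectral resolution is being used, and to exploit the symmetry of $C$ when switching that role.
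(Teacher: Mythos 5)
Your proposal is correct and follows essentially the same route as the paper: both prove (ii) by observing that the spectral projections of any $k\in C(A)$ lie in $C(A)\cap P$ (via Theorem \ref{th:SPComutativity}) and then deduce (iii) as the case $A=\{g\}$, with (i) a direct double application of the same theorem. No gaps.
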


\begin{proof}
(i) Follows from Theorem \ref{th:SPComutativity}. As $C(A)\cap
P\subseteq C(A)$, we have $CC(A)\subseteq C(C(A)\cap P)$.
Conversely, suppose $g\in C(C(A)\cap P)$, $h\in C(A)$, and
$(p\sb{h,\lambda})\sb{\lambda\in\reals}$ is the spectral
resolution of $h$. Then by Theorem \ref{th:SPComutativity},
$p\sb{h,\lambda}\in C(A)\cap P$, so $gCp\sb{h,\lambda}$ for
every $\lambda\in\reals$, and therefore $gCh$. Consequently,
$C(C(A)\cap P)\subseteq CC(A)$, and (ii) holds.  Putting
$A :=\{g\}$ in (ii), we obtain (iii).
\end{proof}

The following theorem indicates the sense in which the spectral
resolution of $g$ is ``continuous from the right."

\begin{theorem} \label{th:RightContinuity}
If $\alpha\in\reals$, then $p\sb{\alpha}$ is the infimum in the {\rm
OML} $P$ of $A :=\{p\sb{\mu}: \alpha<\mu\in\reals\}$.
\end{theorem}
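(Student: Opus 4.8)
The plan is to reduce the infimum over the whole family $A$ to the infimum of a single descending sequence, evaluate that infimum by CV, and then identify it with $p_{\alpha}$. First I would set $\mu_n := \alpha + \tfrac{1}{n}$, so that $\mu_n\downarrow\alpha$ with each $\mu_n>\alpha$. By Theorem~\ref{th:SpecProps}(i) and (iii) the projections $p_{\mu_n}$ lie in $P\cap CC(g)$, commute pairwise, and form a descending sequence bounded below by $p_{\alpha}$; hence by CV and duality they have an infimum $q$ in $G$ with $q\in CC(\{p_{\mu_n}:n\in\Nat\})$, and by Corollary~\ref{co:Psup/infClosed} we get $q\in P$ with $q=\bigwedge_n p_{\mu_n}$ in $P$. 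Monotonicity (Theorem~\ref{th:SpecProps}(iii)) then shows $\bigwedge_P A=q$: for any $\mu>\alpha$ one has $q\le p_{\mu_n}\le p_{\mu}$ once $\mu_n\le\mu$, so $q$ is a lower bound of $A$, while every lower bound of $A$ bounds the subfamily $\{p_{\mu_n}\}$ and so lies below $q$. Since $p_{\alpha}\le p_{\mu_n}$ for all $n$ we also have $p_{\alpha}\le q$, so it remains only to prove $q\le p_{\alpha}$.

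For the reverse inequality I would first establish $q(g-\alpha\cdot1)\le 0$. By Theorem~\ref{th:SpecProps}(ii), $p_{\mu_n}g\le\mu_n p_{\mu_n}$, i.e. $0\le\mu_n p_{\mu_n}-p_{\mu_n}g$; compressing by $q$ (Lemma~\ref{lm:EProperties}(iii)) and using $qp_{\mu_n}=p_{\mu_n}q=q$ gives $0\le\mu_n q-qgq$, so $qgq\le\mu_n q$ for every $n$. Applying an arbitrary state $\omega\in\Omega$ yields $\omega(qgq)\le\mu_n\omega(q)$; letting $n\to\infty$ gives $\omega(qgq)\le\alpha\,\omega(q)$ for all $\omega$, whence $qgq\le\alpha q$ by the order-determining property of $\Omega$ (Theorem~\ref{th:OrderDetermining}). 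Since $q\in C(g)$ we have $qgq=qg$, so $q(g-\alpha\cdot1)=qg-\alpha q\le 0$.

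To finish, write $h:=g-\alpha\cdot1$, so that $qh\le 0$ and $qCh$. Because $h\sp{+},h\sp{-}\in CC(h)$ (Lemma~\ref{lm:Prop g+g-}(ii)) and $(h\sp{+})\dg=1-p_{\alpha}\in CC(h\sp{+})\subseteq CC(h)$ (Theorem~\ref{th:CarrierProperty}), the element $q\in C(h)$ commutes with each of $h\sp{+}$, $h\sp{-}$, and $1-p_{\alpha}$. Then $qh\sp{+},qh\sp{-}\ge 0$ by Definition~\ref{df:e-ring}(iii), $(qh\sp{+})(qh\sp{-})=q\sp{2}h\sp{+}h\sp{-}=0$, and $qh=qh\sp{+}-qh\sp{-}$, so Corollary~\ref{co:Characterize g+g-} identifies $qh\sp{+}=(qh)\sp{+}$. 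As $qh\le 0$ forces $(qh)\sp{+}=0$, we obtain $qh\sp{+}=0$; Lemma~\ref{lm:QALemma} gives $h\sp{+}q=0$, and the carrier property (Theorem~\ref{th:CarrierProperty}) converts this to $(1-p_{\alpha})q=0$, i.e. $q=p_{\alpha}q$, whence $q\le p_{\alpha}$ by Lemma~\ref{lm:EProperties}(ii). Combined with $p_{\alpha}\le q$ this gives $q=p_{\alpha}$, so $\bigwedge_P A=p_{\alpha}$.

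I expect the main obstacle to be the passage to the limit yielding $qgq\le\alpha q$: the family of inequalities $qgq\le\mu_n q$ must be consolidated into a single order relation, and since $G$ has no a priori topology compatible with order limits, the cleanest route is through the order-determining state space (Theorem~\ref{th:OrderDetermining}) rather than a direct monotone-limit manipulation. A secondary delicacy is checking that $q$ commutes with $h\sp{+}$, $h\sp{-}$, and $1-p_{\alpha}$, which is exactly what legitimizes the positive/negative-part bookkeeping in the final step.
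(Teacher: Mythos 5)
Your proof is correct, and its second half follows a genuinely different route from the paper's. The paper fixes an arbitrary lower bound $r\in P$ of $A$ and works with the ``gap'' projection $q:=(p_{\alpha}\vee r)-p_{\alpha}\leq 1-p_{\alpha}$; since this $q$ involves the arbitrary $r$, the paper must invoke Theorem~\ref{th:SPComutativity} (and hence the norm-approximation machinery behind it) just to see that $q$ commutes with $g$, and because $q\leq p_{\mu}-p_{\alpha}$ it gets the two-sided squeeze $\alpha q\leq qgq\leq\mu q$ from Theorem~\ref{th:SpecProps}~(viii), hence the exact identity $\alpha q=qg$ after passing to states, and finishes with $q\leq d_{\alpha}\leq p_{\alpha}$ against $q\leq 1-p_{\alpha}$. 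You instead realize the infimum concretely as the CV-infimum $q$ of the cofinal descending sequence $(p_{\alpha+1/n})_{n}$, which hands you $qCg$ for free from the double-commutant clause of CV and from $p_{\mu}\in CC(g)$; since your $q$ sits above $p_{\alpha}$ rather than below $1-p_{\alpha}$, you only obtain the one-sided bound $qgq\leq\alpha q$, and you close the argument through the positive-part bookkeeping $(q(g-\alpha\cdot 1))^{+}=q(g-\alpha\cdot 1)^{+}=0$ (via Corollary~\ref{co:Characterize g+g-}, QA, and the carrier property) to get $(1-p_{\alpha})q=0$, i.e.\ $q\leq p_{\alpha}$. Both arguments hinge on the same state-space limit through Theorem~\ref{th:OrderDetermining}; yours buys independence from Theorem~\ref{th:SPComutativity} and from the eigenprojection $d_{\alpha}$ at the cost of the explicit sequential construction and the commutativity checks needed to apply Corollary~\ref{co:Characterize g+g-}, all of which you carry out correctly.
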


\begin{proof}
By Theorem \ref{th:SpecProps} (iii), $p\sb{\alpha}$ is a lower bound
for $A$. Suppose that $r\in P$ is another lower bound for $A$. We
have to prove that $r\leq p\sb{\alpha}$. Evidently, $p\sb{\alpha}
\vee r$ is a lower bound for $A$. Define $q :=(p\sb{\alpha}\vee r)
-p\sb{\alpha}=(p\sb{\alpha}\vee r)\wedge(1-p\sb{\alpha})$ (Lemma
\ref{lm:p-q,p+q} (ii)). It will be sufficient to prove that $q=0$.
Let $\lambda\in\reals$.  If $\lambda\leq\alpha$, then $p\sb{\lambda}
\leq p\sb{\alpha}\leq p\sb{\alpha}\vee r$, so $p\sb{\lambda}Cq$ by
Lemma \ref{lm:EProperties} (ii). If $\alpha<\lambda$, then
$p\sb{\lambda} \in A$, so $q\leq p\sb{\alpha}\vee r\leq
p\sb{\lambda}$, and again $p\sb{\lambda}Cq$; hence $gCq$ by Theorem
\ref{th:SPComutativity}.

Now suppose that $\lambda<\mu\in\reals$. Then $p\sb{\mu}\in A$, so
$q\leq p\sb{\alpha}\vee r\leq p\sb{\mu}$ and $q\leq 1-p\sb{\alpha}$,
so $q\leq p\sb{\mu}\wedge(1-p\sb{\alpha})= p\sb{\mu}-p\sb{\alpha}$,
and it follows from Theorem \ref{th:SpecProps} (viii) that $\alpha
q\leq qgq=gq=qg\leq \mu q$. Let $\omega\in\Omega$. As $\omega$ is a
linear functional on $G$, we have
\[
\alpha\omega(q)=\omega(\alpha q)\leq\omega(qg)\leq\omega(\mu q)=
 \mu\omega(q),
\]
and since $\mu>\alpha$ is arbitrary, it follows that $\omega(\alpha
q) =\omega(qg)$. By Theorem \ref{th:OrderDetermining}, we conclude
that $\alpha q=qg=gq$. Therefore, $q(g-\alpha\cdot 1)=0$, whence
$q\leq 1-(g-\alpha\cdot 1) \dg=d\sb{\alpha}\leq p\sb{\alpha}$ by
Theorem \ref{th:SpecProps} (iv). But $q\leq 1-p\sb{\alpha}$, so
$q=0$.
\end{proof}

In view of Theorem \ref{th:SpecProps} (v), Theorem
\ref{th:RightContinuity} has the following corollary.

\begin{corollary} \label{co:psubU=1}
$p\sb{U}=1$
\end{corollary}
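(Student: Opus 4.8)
The plan is to read off $p_U = 1$ by instantiating the right-continuity theorem at the upper spectral bound and then collapsing the relevant indexing family by means of the saturation statement in Theorem \ref{th:SpecProps} (v). Concretely, I would apply Theorem \ref{th:RightContinuity} with $\alpha := U$, which tells us that $p_U$ is the infimum in the OML $P$ of the set
$$ A := \{p_\mu : U < \mu \in \reals\}. $$
From here the entire argument reduces to identifying this set explicitly, since once we know what $A$ is, its infimum in $P$ is immediate.

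The key step is to observe that every index $\mu$ occurring in $A$ satisfies $\mu > U$, so Theorem \ref{th:SpecProps} (v) forces $p_\mu = 1$ for each such $\mu$. Hence $A$ is the one-element set $\{1\}$, and the infimum in $P$ of $\{1\}$ is of course $1$. Combining this identification with the preceding paragraph yields $p_U = 1$, as claimed.

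The only point requiring even momentary attention is that $A$ is nonempty, i.e., that there actually exist reals $\mu$ with $\mu > U$; this is immediate from $U = U_g < \infty$ (Theorem \ref{th:SpecBds} (i)), which guarantees a nondegenerate tail of indices above $U$ over which $p_\mu$ has already saturated to $1$. Thus I anticipate no genuine obstacle here: the corollary is a direct specialization of Theorem \ref{th:RightContinuity} to $\alpha = U$, with Theorem \ref{th:SpecProps} (v) supplying the observation that the spectral family is constantly equal to $1$ just past $U$, so that right-continuity at $U$ pins $p_U$ itself to $1$.
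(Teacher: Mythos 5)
Your argument is correct and is exactly the paper's: the paper derives the corollary by noting that, in view of Theorem \ref{th:SpecProps} (v), Theorem \ref{th:RightContinuity} applied at $\alpha=U$ gives $p\sb{U}$ as the infimum of a set of projections all equal to $1$. Your additional remark that the index set is nonempty (from $U<\infty$) is a harmless and correct bit of extra care.
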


In the same sense as Theorem \ref{th:RightContinuity}, the
eigenprojection $d\sb{\alpha}$ may be interpreted as the ``jump"
that occurs as $\lambda$ approaches $\alpha$ from the left.

\begin{theorem}
If $\alpha\in\reals$, then $p\sb{\alpha}-d\sb{\alpha}$ is the
supremum in the {\rm OML} $P$ of $B :=\{p\sb{\mu}: \alpha>\mu
\in\reals\}$.
\end{theorem}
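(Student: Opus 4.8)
The plan is to obtain this ``left-continuity'' statement for $g$ from the already-proved ``right-continuity'' statement (Theorem~\ref{th:RightContinuity}) applied to $-g$, using the duality between $g$ and $-g$ recorded in Lemma~\ref{lm:SRminusg}. First I would note, via Theorem~\ref{th:SpecProps}~(iv), that $d\sb{\alpha}\leq p\sb{\alpha}$, so that $p\sb{\alpha}-d\sb{\alpha}\in P$ by Lemma~\ref{lm:p-q,p+q}~(i) and the statement makes sense. Let $(q\sb{\lambda})\sb{\lambda\in\reals}$ be the spectral resolution of $-g$ and $(c\sb{\lambda})\sb{\lambda\in\reals}$ its eigenprojections.

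From Lemma~\ref{lm:SRminusg}~(i), $q\sb{\mu}=(1-p\sb{-\mu})+d\sb{-\mu}$, hence $1-q\sb{\mu}=p\sb{-\mu}-d\sb{-\mu}$ for every $\mu\in\reals$; in particular, taking $\mu=-\alpha$, one gets $1-q\sb{-\alpha}=p\sb{\alpha}-d\sb{\alpha}$. Applying Theorem~\ref{th:RightContinuity} to the element $-g$ at the point $-\alpha$ shows that $q\sb{-\alpha}$ is the infimum in the OML $P$ of $\{q\sb{\mu}: -\alpha<\mu\in\reals\}$. Since $p\mapsto 1-p$ is an order-reversing involution of $P$, the duality principle converts this infimum into a supremum: $p\sb{\alpha}-d\sb{\alpha}=1-q\sb{-\alpha}$ is the supremum in $P$ of $\{1-q\sb{\mu}: -\alpha<\mu\}=\{p\sb{-\mu}-d\sb{-\mu}: -\alpha<\mu\}$, i.e., writing $\nu:=-\mu$, the supremum of $B':=\{p\sb{\nu}-d\sb{\nu}:\nu<\alpha\}$.

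The final step is to replace $B'$ by $B=\{p\sb{\mu}:\mu<\alpha\}$, which I would do by checking that the two families have exactly the same upper bounds in $P$, so that $\sup B=\sup B'$. In one direction, $p\sb{\nu}-d\sb{\nu}\leq p\sb{\nu}$ places each member of $B'$ below a member of $B$. In the other, given $\mu<\alpha$ choose $\mu'$ with $\mu<\mu'<\alpha$; then $p\sb{\mu}\leq p\sb{\mu'}$ by Theorem~\ref{th:SpecProps}~(iii) and $p\sb{\mu}\leq 1-d\sb{\mu'}$ by Theorem~\ref{th:SpecProps}~(iv), so $p\sb{\mu}\leq p\sb{\mu'}\wedge(1-d\sb{\mu'})=p\sb{\mu'}-d\sb{\mu'}\in B'$ by Lemma~\ref{lm:p-q,p+q}~(ii). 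Thus every upper bound of $B'$ bounds $B$ and conversely, whence $\sup B=\sup B'=p\sb{\alpha}-d\sb{\alpha}$, as required.

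I expect the main difficulty to be bookkeeping rather than conceptual: carrying all indices correctly through the substitution $\mu\leftrightarrow-\mu$ and making sure Theorem~\ref{th:RightContinuity} is instantiated at $-\alpha$ for $-g$ (not at $\alpha$ for $g$), together with the cofinality estimate $p\sb{\mu}\leq p\sb{\mu'}-d\sb{\mu'}$ that legitimizes interchanging $B$ and $B'$. As a cross-check one could argue directly instead: the upper-bound half, $p\sb{\mu}\leq p\sb{\alpha}-d\sb{\alpha}$ for $\mu<\alpha$, is immediate from the same (iii)/(iv) estimate taken with $\mu'=\alpha$, while the least-upper-bound half would imitate the proof of Theorem~\ref{th:RightContinuity}. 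I prefer the duality route precisely because it reuses that proof wholesale rather than repeating the $gCq$ and eigenprojection argument.
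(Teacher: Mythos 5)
Your proposal is correct and follows essentially the same route as the paper: apply Theorem \ref{th:RightContinuity} to $-g$ at $-\alpha$, translate via Lemma \ref{lm:SRminusg} and the De\,Morgan duality in $P$ to identify $p\sb{\alpha}-d\sb{\alpha}$ as the supremum of $\{p\sb{\mu}-d\sb{\mu}:\mu<\alpha\}$, and then pass to $B$ itself. Your final bridging step (mutual cofinality of $B$ and $B'$ via $p\sb{\mu}\leq p\sb{\mu'}-d\sb{\mu'}$ for $\mu<\mu'<\alpha$) is only a cosmetic variant of the paper's direct verification that $p\sb{\alpha}-d\sb{\alpha}$ bounds $B$ and that any upper bound of $B$ bounds $C$.
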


\begin{proof}
By Theorem \ref{th:SpecProps} (iv), $d\sb{\alpha}\leq p\sb{\alpha}$,
so $p\sb{\alpha}-d\sb{\alpha}=p\sb{\alpha}\wedge(1-d\sb{\alpha})\in
P$ by Lemma \ref{lm:p-q,p+q} (ii). Let
$(q\sb{\lambda})\sb{\lambda\in\reals}$ be the spectral resolution of
$-g$. By Theorem \ref{th:RightContinuity} and Lemma
\ref{lm:SRminusg}, $q\sb{-\alpha}=(1-p\sb{\alpha})+d\sb{\alpha}$ is
the infimum in $P$ of
\[
\{q\sb{\lambda}:
-\alpha<\lambda\}=\{(1-p\sb{-\lambda}+d\sb{-\lambda}:
-\lambda<\alpha\}=\{(1-p\sb{\mu})+d\sb{\mu}: \mu<\alpha\}{\rm ;}
\]
hence by duality in $P$ (in this case, the De\,Morgan law),
$1-((1-p\sb{\alpha})+d\sb{\alpha})=p\sb{\alpha}-d\sb{\alpha}$ is the
supremum in $P$ of
\[
C
:=\{1-((1-p\sb{\mu})+d\sb{\mu}):\mu<\alpha\}=\{p\sb{\mu}-d\sb{\mu}:
\mu<\alpha\}.
\]
We have to show that $p\sb{\alpha}-d\sb{\alpha}=p\sb{\alpha}\wedge
(1-d\sb{\alpha})$ is also the supremum in $P$ of $B$. If
$\mu<\alpha$, then by parts (iii) and (iv) of Theorem
\ref{th:SpecProps}, $p\sb{\mu} \leq
p\sb{\alpha}\wedge(1-d\sb{\alpha})$, i.e., $p\sb{\alpha}\wedge
(1-d\sb{\alpha})$ is an upper bound for $B$. Suppose that $r\in P$
is another upper bound for $B$. Then, if $\mu<\alpha$, we have
$p\sb{\mu}-d\sb{\mu}\leq p\sb{\mu}\leq r$, i.e., $r$ is an upper
bound for $C$; hence $p\sb{\alpha}-d\sb{\alpha}\leq r$, so
$p\sb{\alpha}-d\sb{\alpha}$ is the supremum of $B$.
\end{proof}

\section{Blocks and C-blocks} 

According to Theorem \ref{th:SigmaOML}, $P$ is a $\sigma$-complete
orthomodular lattice. Recall that elements $p,q\in P$ are called
(\emph{Mackey}) \emph{compatible} iff there are pairwise orthogonal
elements $p_1,q_1,r\in P$ such that $p=p_1\vee r=p_1+r$, $q=q_1\vee
r=q_1+r$.

\begin{lemma}\label{lm:compcom} Two elements $p,q\in P$ are
compatible iff they commute.
\end{lemma}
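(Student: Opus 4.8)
The plan is to prove the equivalence by showing each direction separately, exploiting the characterizations of commutativity and order relations among projections that were established earlier in the excerpt. First I would establish the easier direction: compatible implies commuting. Assume $p,q\in P$ are compatible, so there exist pairwise orthogonal $p_1,q_1,r\in P$ with $p=p_1+r$ and $q=q_1+r$. Pairwise orthogonality means the products vanish in both orders (by Lemma \ref{lm:p-q,p+q} (iii), $a+b\in P$ forces $ab=ba=0$). Then I would simply compute $pq=(p_1+r)(q_1+r)=p_1q_1+p_1r+rq_1+r^2=r$, since $p_1q_1=p_1r=rq_1=0$ and $r^2=r$. By the same token $qp=r$, so $pq=qp=r$ and hence $pCq$.

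For the converse, suppose $pCq$, i.e. $pq=qp$. The natural move is to invoke Lemma \ref{lm:p-q,p+q} (iv), which already tells us that when $pCq$ we have $p\wedge q=pq$, $p\vee q=p+q-pq$, and $p+q=p\vee q+p\wedge q$. The strategy is to exhibit the witnessing triple explicitly by setting $r:=p\wedge q=pq$, $p_1:=p-r$, and $q_1:=q-r$. I would first check these are genuine projections: since $r=pq\leq p$ and $r\leq q$ (as $pq\leq p,q$ by Lemma \ref{lm:EProperties} (i) when $pCq$), Lemma \ref{lm:p-q,p+q} (i) gives $p-r,\,q-r\in P$. By construction $p=p_1+r$ and $q=q_1+r$, and I must verify these are also the join decompositions, i.e. $p_1\vee r=p_1+r$ and $q_1\vee r=q_1+r$; this follows from Lemma \ref{lm:p-q,p+q} (iii) once I confirm $p_1+r\in P$ (which is just $p\in P$) and likewise for $q$.

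The remaining task is to verify pairwise orthogonality of $p_1,q_1,r$. I expect this to be the main computational step, though it is routine given the commutativity. Since $p,q,pq$ all commute pairwise (all are polynomials in the commuting pair $p,q$), I would compute directly: $p_1 r=(p-pq)(pq)=p^2q-pq^2p$; using $p^2=p$, $q^2=q$, and commutativity this collapses to $pq-pq=0$, and symmetrically $r p_1=0$, $q_1 r=r q_1=0$. For $p_1 q_1=(p-pq)(q-pq)$, expanding with commutativity and idempotency yields $pq-p^2q-pq^2+p^2q^2=pq-pq-pq+pq=0$, and similarly $q_1p_1=0$. Thus $p_1,q_1,r$ are pairwise orthogonal with the required sum decompositions, establishing compatibility.

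The only genuine subtlety is making sure that the Mackey-compatibility definition's requirement of writing sums as \emph{both} ordinary sums $p_1+r$ and lattice joins $p_1\vee r$ is met; this is exactly what Lemma \ref{lm:p-q,p+q} (iii) provides for orthogonal projections whose sum is again a projection, so no hard obstacle arises. In short, the proof reduces entirely to the algebra of commuting idempotents together with the projection-lattice identities already catalogued in Lemma \ref{lm:p-q,p+q}; I anticipate no step requiring machinery beyond those cited results.
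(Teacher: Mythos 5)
Your proof is correct and follows essentially the same route as the paper's: for the converse you take $r:=pq=p\wedge q$, $p_1:=p-r$, $q_1:=q-r$ and verify orthogonality, and for the forward direction you expand $(p_1+r)(q_1+r)=r=qp$. The paper's version is just a terser rendering of the same argument, leaving the orthogonality computations implicit.
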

\begin{proof} Let $pq=qp$ and put $r=pq$. By Lemma 4.8 (iv),
$pq=p\wedge q\in P$. Then $p=(p-r)+r, q=(q-r)+r$, and $r,p-r,q-r$
are pairwise orthogonal.

Conversely, let $p=p_1+r, q=q_1+r$ with $p_1,q_1,r$ pairwise
orthogonal. Then $pq=qp=r$.
\end{proof}

A subset $B$ of $P$ is called a \emph{block} of $P$ if $B$ is a
maximal set of pairwise compatible elements \cite[Ch.\,1, \S4]{Kalm}.
In view of Lemma \ref{lm:compcom}, it is clear that $B\subseteq P$
is a block of $P$ iff $B=C(B)\cap P$. It is well known that every
block in $P$ is a maximal Boolean $\sigma$-subalgebra of $P$.
Following \cite[Def. 5.1]{FoPu1}, a subgroup of $G$ having the
form $C(B)$, where $B$ is a block in $P$, will be called a
\emph{C-block} in $G$.

\begin{theorem}\label{th:maxcom} A subset $H$ of $G$ is a C-block
of $G$ iff $H$ is a maximal set of pairwise commuting elements of
$G$.
\end{theorem}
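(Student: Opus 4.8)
A subset $H$ of $G$ is a C-block of $G$ iff $H$ is a maximal set of pairwise commuting elements of $G$.

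The plan is to exploit Corollary \ref{co:SPCommutativity} (ii), which identifies $CC(A) = C(C(A)\cap P)$, together with the spectral commutation criterion in Theorem \ref{th:SPComutativity} reducing commutation of arbitrary elements to commutation of their spectral projections. A set $H\subseteq G$ of pairwise commuting elements is maximal precisely when $H = C(H)$, so the whole theorem hinges on relating the self-commutant condition $H = C(H)$ for subsets of $G$ to the block condition $B = C(B)\cap P$ for subsets of $P$.

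First I would prove the easier direction: that every C-block is a maximal pairwise-commuting set. Let $H = C(B)$ for a block $B\subseteq P$, so $B = C(B)\cap P$. Since $B$ consists of pairwise commuting projections, every pair of elements of $C(B)$ need not obviously commute, so I must argue that $C(B)$ is in fact pairwise commuting. Here the key is that $B = C(B)\cap P$ gives $C(B) = C(C(B)\cap P) = CC(B)$ by Corollary \ref{co:SPCommutativity} (ii). Now if $g,h\in CC(B)$, then in particular every spectral projection $p_{g,\lambda}$ lies in $CC(B)\cap P$; but by Theorem \ref{th:SPComutativity} each such projection commutes with $g$ hence with everything $g$ commutes with, and one shows $p_{g,\lambda}\in C(B)\cap P = B$, so the spectral projections of $g$ and of $h$ all lie in the pairwise-compatible set $B$ and hence commute with one another. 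By Corollary \ref{co:SPCommutativity} (i), $gCh$. Maximality follows because any element commuting with all of $H = CC(B)$ lies in $C(CC(B)) = CCC(B) = C(B) = H$ (using $CCC = C$, which is the standard closure identity one gets from $A\subseteq CC(A)$ applied twice).

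For the converse, suppose $H$ is a maximal pairwise-commuting set, so $H = C(H)$. The plan is to extract the projection part $B := C(H)\cap P = H\cap P$ and show it is a block, then that $C(B) = H$. That $B$ is a set of pairwise compatible projections follows from Lemma \ref{lm:compcom} since they pairwise commute. To see $B$ is maximal among compatible sets, i.e. $B = C(B)\cap P$: the inclusion $B\subseteq C(B)\cap P$ is automatic, and for the reverse, any projection $p\in C(B)\cap P$ must be shown to lie in $H$; I would argue $p$ commutes with every $h\in H$ by using Theorem \ref{th:SPComutativity} to reduce $pCh$ to $p$ commuting with the spectral projections of $h$, which lie in $B$, so $p\in C(H) = H$ and thus $p\in H\cap P = B$. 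Finally $C(B) = C(C(H)\cap P) = CC(H) = C(H) = H$, where the middle equality is Corollary \ref{co:SPCommutativity} (ii) and the last uses $H = C(H)$.

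The main obstacle I anticipate is the bookkeeping around showing that the spectral projections of an element of a commuting family stay inside the corresponding block $B$, rather than merely inside $C(g)\cap P$; this is exactly where Theorem \ref{th:SPComutativity} and the closure identity $CC(A) = C(C(A)\cap P)$ must be combined carefully, and it is the crux that makes both directions work symmetrically. Once that interplay is set up, the equivalence $H = C(H) \Leftrightarrow B = C(B)\cap P$ with $B = H\cap P$ and $H = C(B)$ falls out, and the theorem follows.
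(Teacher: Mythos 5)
Your proposal is correct and follows essentially the same route as the paper: both directions reduce maximality to the self-commutant condition $H=C(H)$ and then turn on the identity $C(C(A)\cap P)=CC(A)$ of Corollary \ref{co:SPCommutativity} (ii). The only real difference is that you re-derive pairwise commutativity of $C(B)$ by chasing spectral projections into $B$, which is unnecessary once you have $H=C(B)=C(C(B)\cap P)=CC(B)=C(H)$ (and your phrase ``commutes with $g$ hence with everything $g$ commutes with'' should read ``double commutes with $g$,'' i.e.\ $p_{g,\lambda}\in CC(g)$ by Theorem \ref{th:SpecProps} (i), so that $B\subseteq C(g)$ forces $p_{g,\lambda}\in C(B)$).
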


\begin{proof}
If $H\subseteq G$, it is clear that $H$ is a maximal set of pairwise
commuting elements of $G$ iff $H=C(H)$. Suppose $H=C(B)$ for some
block $B=C(B)\cap P$ of $P$. Then by Corollary \ref{co:SPCommutativity}
(ii), $H=C(B)=C(C(B)\cap P)=CC(B)=C(H)$. Conversely, suppose $H=
C(H)$ and put $B :=H\cap P=C(H)\cap P$. Then $CC(H)=C(H)=H$ and,
again by Corollary \ref{co:SPCommutativity} (ii), $B=H\cap P=
CC(H)\cap P=C(C(H)\cap P))\cap P=C(B)\cap P$.
\end{proof}

As a consequence of Theorem \ref{th:maxcom}, $G$ is covered by
its own C-blocks. Moreover, as we proceed to show, each C-block $H$
in $G$ is itself an AH-algebra that has the structure of an
archimedean lattice-ordered commutative real Banach algebra.

\begin{theorem} \label{th:CBlockAHAlg}
Let $H$ be a C-block in $G$. Then $(R,E\cap H)$ is an e-ring with
directed group $H$, the e-ring partial order on $H$ is the
partial order induced from $G$, the set of projections in $H$
is a block $B$ in $P$, $H=C(B)$, and $H$ is an AH-algebra with the
Vigier property. Moreover, under the $1$-norm $\|\cdot\|$, $H$
is a commutative and associative real Banach algebra with unity
element $1$ and with the property that $h\in H\Rightarrow\|h\sp{2}\|
=\|h\|\sp{2}$.
\end{theorem}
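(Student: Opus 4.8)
The plan is to lean entirely on the characterization from Theorem \ref{th:maxcom}, namely that a C-block $H=C(B)$ satisfies $H=C(H)$, so any two elements of $H$ commute. First I would record the structural facts that render everything else routine. Since $1$ commutes with everything, $1\in C(B)=H$; and as each $C(b)=\{g\in G: gb=bg\}$ is closed under the group operations and under the scalar multiplication of Theorem \ref{th:Jordan} (multiplication by a real scalar preserves commutativity), $H=\bigcap\sb{b\in B}C(b)$ is a real linear subspace of $G$ containing $1$ as an order unit. If $g,h\in H$ then $gCh$, so $gh=hg\in G$ by Lemma \ref{lm:1/2Lemma} (i); moreover for $b\in B$ one computes $b(gh)=(bg)h=(gb)h=g(bh)=g(hb)=(gh)b$, so $gh\in C(B)=H$ and $H$ is a commutative associative subalgebra of $R$. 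The single workhorse I expect to use repeatedly is this: if $A\subseteq H$, then each $b\in B$ lies in $C(A)$ (because $b$ commutes with every element of $H\supseteq A$), hence $CC(A)=C(C(A))\subseteq C(B)=H$.

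Next I would establish the identity $(E\cap H)\sp{+}=E\sp{+}\cap H$, the one genuinely technical point. The inclusion $\subseteq$ is immediate; for $\supseteq$, given $0\leq a\in H$ pick $N\in\Nat$ with $a\leq N\cdot 1$, so $0\leq a/N\leq 1$ with $a/N\in H$, whence $a/N\in E\cap H$ and $a=N(a/N)$ exhibits $a$ as a finite sum of effects in $E\cap H$. With this in hand the e-ring axioms of Definition \ref{df:e-ring} for $(R,E\cap H)$ follow by restriction: $0,1\in E\cap H$ and $E\cap H$ is closed under $e\mapsto 1-e$; axioms (i)--(vi) hold for $a,b\in(E\cap H)\sp{+}\subseteq E\sp{+}$ because the corresponding $G$-statements hold, and whenever an axiom asserts membership in $E\sp{+}$ or $E$, the relevant product ($ab$, $aba$, $(a-b)\sp{2}$) also lies in the subalgebra $H$, hence in $E\sp{+}\cap H=(E\cap H)\sp{+}$ or in $E\cap H$. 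To identify the directed group of $(R,E\cap H)$ with $H$ under the induced order, I would write $h=h\sp{+}-h\sp{-}$ (Lemma \ref{lm:Prop g+g-} (iii)); since $h\sp{+},h\sp{-}\in CC(h)\subseteq H$ by the workhorse and $0\leq h\sp{+},h\sp{-}$, both lie in $E\sp{+}\cap H=(E\cap H)\sp{+}$, so $H=(E\cap H)\sp{+}-(E\cap H)\sp{+}$; and since the positive cone $(E\cap H)\sp{+}=E\sp{+}\cap H$ is exactly the induced cone, the orders agree.

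For the projection and AH-structure claims: the projections of $(R,E\cap H)$ are $P\cap H=C(B)\cap P=B$, the given block, and $H=C(B)$ by definition. To see that $H$ is an AH-algebra I note $\st=\frac{1}{2}\,1\in H$, that QA is inherited verbatim (it is a statement about products in $R$ of elements of $H\subseteq G$), and that $H$ in fact enjoys the full Vigier property: given an ascending sequence in $H$ bounded above in $H$, its terms commute pairwise automatically, so CV in $G$ yields a supremum $g$ with $g\in CC(\{g\sb{n}\})\subseteq H$ by the workhorse; this $g$ is then the supremum in $H$ because the order is induced, and the bicommutant side condition is vacuous, since for the commutant $C\sb{H}$ computed inside the commutative group $H$ one has $C\sb{H}(A)=H$ for every $A\subseteq H$. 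Thus $H$ is an AH-algebra with the V property.

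Finally, the Banach-algebra statement. Commutativity and associativity of the product were already noted, and $1$ is the unit. Because $H$ has the V property it is monotone $\sigma$-complete, so Theorem \ref{th:Banach} makes $H$ a real Banach space under its $1$-norm, which is the restriction of the $1$-norm of $G$ since the order unit and order on $H$ are induced from $G$. Submultiplicativity $\|gh\|\leq\|g\|\,\|h\|$ follows from Theorem \ref{th:NormofProduct} (iv) (applicable because $gCh$ throughout $H$), and $\|h\sp{2}\|=\|h\|\sp{2}$ is Theorem \ref{th:NormofProduct} (ii). The step deserving the most care, and the true content of the argument, is the interplay in the middle paragraphs between the commutant calculus and the order: the identity $(E\cap H)\sp{+}=E\sp{+}\cap H$ together with the repeated use of $CC(A)\subseteq H$ to guarantee that positive parts and suprema formed in $G$ land back inside $H$ is exactly what upgrades the mere CV property of $G$ to the full V property on each C-block.
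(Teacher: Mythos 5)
Your proposal is correct and follows essentially the same route as the paper: the paper explicitly omits the ``straightforward verification'' of the e-ring axioms, the directed-group identification, and the identification of the projections (which you supply in detail), and then proves the V property exactly as you do --- pairwise commutativity via Theorem \ref{th:maxcom}, CV in $G$ to produce the supremum, and $CC(\{h\sb{n}\})\subseteq C(B)=H$ to land it back in $H$ --- before invoking Theorems \ref{th:Banach} and \ref{th:NormofProduct} for the Banach-algebra claims. Your filled-in details (in particular $(E\cap H)\sp{+}=E\sp{+}\cap H$ via the order unit, and $h=h\sp{+}-h\sp{-}$ with $h\sp{\pm}\in CC(h)\subseteq H$) are sound.
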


\begin{proof}
By definition of a C-block, there exists a block $B$ in $P$
such that $H=C(B)$. We omit the straightforward verification
that $(R,E\cap H)$ satisfies the conditions in Definition
\ref{df:e-ring}, that $H$ is the directed group of $(R,E\cap H)$,
that $B$ is the set of projections in $H$, and that the e-ring
partial order on $H$ is the restriction to $H$ of the partial order
on $G$. Obviously, $\frac12\in C(B)=H$ and $H$ inherits the QA
property from $G$.

To prove that $H=C(B)$ has the V property, suppose $h\sb{1}
\leq h\sb{2}\leq\cdots$ is an ascending sequence in $H$ that
is bounded above in $H$. Then the sequence is bounded above
in $G$, and by Theorem \ref{th:maxcom}, the elements of the
sequence commute pairwise, hence by CV it has a supremum $h$ in
$G$ and $h\in CC\{h\sb{n} : n\in\Nat\}$. If $p\in B$, then
$pCh\sb{n}$ for all $n\in\Nat$, and therefore $hCp$. Consequently,
$h\in C(B)=H$, so $h$ is the supremum of the sequence
$(h\sb{n})\sb{n\in\Nat}$ in $H$, and $h$ double commutes in
$H$ with the set $\{h\sb{n} : n\in\Nat\}$. Thus, $H$ has the V
property, hence it has the CV property, and therefore $H$ is an
AH-algebra.

Obviously, $H=C(B)$ is closed under multiplication by real
numbers, and if $g,h\in H$, then $gh=hg\in G$ by Theorem
\ref{th:maxcom} and Lemma \ref{lm:1/2Lemma} (i), whence
$gh\in H$. Therefore, $H$ is a commutative and associative
real linear algebra with unity element $1$. By Theorem
\ref{th:NormofProduct} (iv), $H$ is a normed linear algebra
under the $1$-norm. As $H$ has the V property, it is
monotone $\sigma$-complete, whence it is a Banach algebra under
the $1$-norm by Theorem \ref{th:Banach}. By Theorem
\ref{th:NormofProduct} (ii), $\|h\sp{2}\|=\|h\|\sp{2}$ for all
$h\in H$.
\end{proof}

Let ${\mathcal A}$ be a linear algebra over $\reals$. We say that
${\mathcal A}$ is a \emph{partially ordered linear algebra} iff
the additive group of ${\mathcal A}$ is a partially ordered
abelian group, and whenever $0\leq a,b\in{\mathcal A}$ and $0
\leq\lambda\in\reals$, we have $0\leq ab$ and $0\leq\lambda a$.
If a partially ordered linear algebra ${\mathcal A}$ is a
lattice, it is called an \emph{$\ell$-algebra} \cite{Fuc}.

\begin{theorem} \label{th:CBlockLattice}
Let $H$ be a C-block in $P$. Then: {\rm (i)} $h\in H\Rightarrow
|h|,\,h\sp{+},h\sp{-},h\dg\in H$. {\rm (ii)} $H$ is an archimedean
Dedekind $\sigma$-complete $\ell$-algebra with order unit $1$.
{\rm (iii)} If $g,h\in H$, then the infimum and supremum of $g$
and $h$ in $H$ are given by  $g\wedge\sb{H}h=g-(g-h)\sp{+}$ and
$g\vee\sb{H}h=g+(h-g)\sp{+}$. {\rm (iv)} If $h\in H$, then the
spectral resolution and the family of eigenprojections of $h$ are the
same whether calculated in $G$ or in $H$.
\end{theorem}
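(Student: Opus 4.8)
The approach is to reduce all four parts to a single structural fact about $H$ and then read them off. By Theorem \ref{th:maxcom}, $H$ is a maximal set of pairwise commuting elements, so $H=C(H)$; consequently, for any $h\in H$ every element of $H$ commutes with $h$, i.e. $H\subseteq C(h)$, and since the commutant is order-reversing this gives $CC(h)\subseteq C(H)=H$. Thus $CC(h)\subseteq H$ for each $h\in H$, and part (i) follows at once: $|h|,h^{+},h^{-}\in CC(h)\subseteq H$ by Lemma \ref{lm:Prop g+g-} (ii), while $h\dg\in CC(h)\subseteq H$ by Theorem \ref{th:CarrierProperty}.

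For (iii) I would first show that, for each $k\in H$, the positive part $k^{+}$ is the supremum of $k$ and $0$ in $H$. It is an upper bound because $0\le k^{+}$ and, by Lemma \ref{lm:Prop g+g-} (iii), $k^{+}=k+k^{-}\ge k$. For minimality, let $m\in H$ satisfy $0\le m$ and $k\le m$, and put $p:=(k^{+})\dg\in H$. As $H$ is commutative, $p$ commutes with $k$ and $m$; applying Lemma \ref{lm:1/2Lemma} (vi) to $k\le m$ with the positive multiplier $p$ yields $pm\ge pk=k^{+}$, the last equality being Lemma \ref{lm:Prop g+g-} (xi), while applying it to $0\le m$ with the positive multiplier $1-p$ yields $(1-p)m\ge 0$; hence $m=pm+(1-p)m\ge k^{+}$. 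Since translation by a fixed element is an order automorphism of $H$, taking $k=h-g$ (so that $k^{+}\in H$ by (i)) gives $g\vee_{H}h=g+(h-g)^{+}$, and the dual computation gives $g\wedge_{H}h=g-(g-h)^{+}$; in particular $H$ is a lattice.

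Part (ii) then assembles standard pieces. By Theorem \ref{th:CBlockAHAlg}, $H$ is a commutative associative real algebra with unit $1$ and with the V property; as an AH-algebra it is archimedean by Theorem \ref{th:1/2+CV} (ii), and $1$ remains an order unit in $H$. That $H$ is a partially ordered algebra follows since, for commuting $0\le a,b\in H$, one has $a^{1/2}\in CC(a)\subseteq H$ and $b^{1/2}\in H$ (Theorem \ref{th:SqRoot}), whence $ab=(a^{1/2}b^{1/2})^{2}\ge 0$; together with the lattice structure from (iii) this makes $H$ an $\ell$-algebra. For Dedekind $\sigma$-completeness, given a countable subset of $H$ bounded above, its finite suprema (available since $H$ is a lattice) form an ascending bounded sequence whose supremum exists by the V property and is the supremum of the set; infima follow by the order anti-automorphism $x\mapsto -x$.

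Finally, (iv) rests on the observation that the ingredients of the spectral data are intrinsic and already lie in $H$. For $k:=h-\lambda\cdot 1\in H$, the square $k^{2}$ is computed by the same multiplication in $H$ and in $G$, so by uniqueness of square roots (Theorem \ref{th:SqRoot}) the absolute value $|k|=(k^{2})^{1/2}$, and hence $k^{+}$, is the same in both; likewise the $G$-carrier $k\dg$ lies in $H$ by (i) and satisfies $kg=0\Leftrightarrow k\dg g=0$ for all $g\in H$, so by uniqueness of the carrier within $H$ it is the $H$-carrier. Since $p_{h,\lambda}$ and $d_{h,\lambda}$ are built solely from the operations $(\cdot)^{+}$ and $(\cdot)\dg$, they agree whether computed in $G$ or in $H$. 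The step I expect to be most delicate is the minimality argument in (iii): passing from the one-sided estimate $pm\ge k^{+}$ to the full inequality $m\ge k^{+}$ genuinely requires the carrier projection $p=(k^{+})\dg$ and the orthogonal splitting $m=pm+(1-p)m$, and it is there that the commutativity of $H$ together with the carrier machinery does the essential work.
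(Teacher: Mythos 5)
Your proof is correct and follows essentially the same route as the paper's: part (i) via $CC(h)\subseteq C(B)=H$, the lattice structure via the comparability projection $p=((g-h)\sp{+})\dg$ together with the splitting $m=pm+(1-p)m$, Dedekind $\sigma$-completeness from the V property, and (iv) from the intrinsic (uniqueness) characterizations of square roots and carrier projections. The only real difference is organizational: the paper exhibits $ph+(1-p)g$ directly as $g\wedge\sb{H}h$ and then obtains the formulas in (iii) by citing \cite[Theorem 5.4 (iv)]{FGCOMP} on pseudo-meets in comparability groups, whereas you derive everything from $k\vee\sb{H}0=k\sp{+}$ plus translation invariance, which is slightly more self-contained but rests on the identical key step.
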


\begin{proof} There is a block $B$ in $P$ such that $H=C(B)$.

\smallskip

\noindent (i) If $h\in H=C(B)$, then $|h|,\,h\sp{+},h\sp{-}\in H$ by Lemma
\ref{lm:Prop g+g-} (ii), and $h\dg\in H$ by Theorem
\ref{th:CarrierProperty}.

\smallskip

\noindent (ii) Obviously, $H=C(B)$ is an archimedean partially
ordered algebra over $\reals$ and $1$ is an order unit in $H$.
To prove that $H$ is a lattice, let $g,h\in H$ and put $p
:=((g-h)^+)\dg$. Then by (i), $p\in H\cap P=C(B)\cap P=B$, and
by Theorem \ref{th:Comparability}, $p\in P^{\pm}(g-h)$, so
$(1-p)(g-h)\leq 0\leq p(g-h)$ with $(1-p)(g-h), p(g-h)\in H$. Put
$a:=ph+(1-p)g$. Then $a\in H$ and $a\leq g,h$. Suppose $b\in H$ and
$b\leq g,h$. Then $pb\leq ph$ and $(1-p)b\leq (1-p)g$, so $b\leq pb
+(1-p)b\leq a$. Thus $a$ is the infimum of $g$ and $h$ in $H$. The
existence of the supremum of $g$ and $h$ in $H$ is shown dually,
hence $H$ is a lattice. By Theorem \ref{th:CBlockAHAlg}, $H$ has
the V property, therefore it is monotone $\sigma$-complete,
and consequently it is Dedekind $\sigma$-complete by
\cite[Lemma 16.7]{Good}.

\smallskip

\noindent (iii) Let $g,h\in H$. Recall that in a comparability
group, the pseudo-meet $g\sqcap h$ and pseudo-join $g\sqcup h$ are
defined by $g\sqcap h:= g-(g-h)^+$, $g\sqcup h:=g+(h-g)^+$
\cite[Definition 5.2]{FGCOMP}. By (i), $g\sqcap h, g\sqcup h
\in H$, and by \cite[Theorem 5.4 (iv)]{FGCOMP} $g\wedge\sb{H}h=
g\sqcap h$ and $g\vee\sb{H}h=g\sqcup h$.

\smallskip

\noindent (iv) Follows directly from (i) and Definition
\ref{df:SpecRes}.
\end{proof}

In view of Theorem \ref{th:CBlockLattice} (iii), we have the
following.

\begin{corollary}
Suppose that $H\sb{1}$ and $H\sb{2}$ are C-blocks in $G$ and that
$g,h\in H\sb{1}\cap H\sb{2}$. Then the infimum and supremum
of $g$ and $h$ as calculated in $H\sb{1}$ are the same as the
infimum and supremum of $g$ and $h$ as calculated in $H\sb{2}$.
\end{corollary}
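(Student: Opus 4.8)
The plan is to reduce the entire assertion to the explicit formulas supplied by Theorem \ref{th:CBlockLattice} (iii), which express the lattice operations inside \emph{any} C-block purely in terms of the ambient group operations of $G$ and the positive-part operation $(\cdot)\sp{+}$. The decisive observation is that $(\cdot)\sp{+}$ is defined once and for all in $G$ (Definition \ref{df:AbsVal}), via the absolute value $|a|=(a\sp{2})\sp{1/2}$ and the \emph{unique} square root guaranteed by Theorem \ref{th:SqRoot}. Consequently $(\cdot)\sp{+}$ assigns to each $a\in G$ a single well-determined element $a\sp{+}\in G$, with no reference whatsoever to any particular C-block. The corollary should therefore follow formally, because the formulas for the meet and join in a C-block are written in group-theoretic terms that never mention the C-block itself.

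First I would apply Theorem \ref{th:CBlockLattice} (iii) with $H=H\sb{1}$, which is legitimate since $g,h\in H\sb{1}$, to obtain $g\wedge\sb{H\sb{1}}h=g-(g-h)\sp{+}$ and $g\vee\sb{H\sb{1}}h=g+(h-g)\sp{+}$. Then I would apply the same theorem with $H=H\sb{2}$, legitimate since $g,h\in H\sb{2}$, to obtain $g\wedge\sb{H\sb{2}}h=g-(g-h)\sp{+}$ and $g\vee\sb{H\sb{2}}h=g+(h-g)\sp{+}$. Since $(g-h)\sp{+}$ and $(h-g)\sp{+}$ denote the same fixed elements of $G$ in both pairs of formulas, the right-hand sides coincide, and hence $g\wedge\sb{H\sb{1}}h=g\wedge\sb{H\sb{2}}h$ and $g\vee\sb{H\sb{1}}h=g\vee\sb{H\sb{2}}h$, as required.

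The only point requiring any care is confirming that both invocations of Theorem \ref{th:CBlockLattice} (iii) are genuinely available, i.e. that $g-h$ lies in each $H\sb{i}$ so that the cited part (i) of that theorem places $(g-h)\sp{+}$ in $H\sb{i}$ as well; but this is immediate from $g,h\in H\sb{1}\cap H\sb{2}$ together with the fact that each C-block $H\sb{i}=C(B\sb{i})$ is a subgroup of $G$. I do not expect any substantive obstacle here: the entire content of the corollary is that the meet/join formula for a C-block is intrinsic to $G$ and determined by $g$ and $h$ alone, so its value cannot depend on the choice of maximal commuting set containing them.
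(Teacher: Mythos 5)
Your proof is correct and is essentially the paper's own argument: the paper derives the corollary directly from Theorem \ref{th:CBlockLattice} (iii), exactly as you do, by noting that the formulas $g\wedge\sb{H}h=g-(g-h)\sp{+}$ and $g\vee\sb{H}h=g+(h-g)\sp{+}$ involve only data intrinsic to $G$ and hence cannot depend on which C-block contains $g$ and $h$. Your additional check that $g-h$ lies in each $H\sb{i}$ is a harmless elaboration of the same reasoning.
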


\end{document}